\newcommand{\bA}{{\mathbb A}}
\newcommand{\bC}{{\mathbb C}}
\newcommand{\bQ}{{\mathbb Q}}
\newcommand{\bZ}{{\mathbb Z}}
\newcommand{\bG}{{\mathbb G}}
\newcommand{\la}{{\langle}}
\newcommand{\ra}{{\rangle}}
\newtheorem{thm}{Theorem}[section]
\newtheorem{lemma}[thm]{Lemma}
\newtheorem{cor}[thm]{Corollary}
\newtheorem{prop}[thm]{Proposition}
\numberwithin{equation}{section}
\begin{document}

\title[Chow ring and Weyl group invariants]{Note on restriction maps of Chow rings to  Weyl group invariants}
 
\author{Nobuaki Yagita}

\address{ faculty of Education, 
Ibaraki University,
Mito, Ibaraki, Japan}
 
\email{nobuaki.yagita.math@vc.ibaraki.ac.jp, }

\keywords{Chow ring, algebraic cobordism, $BSpin(n)$}
\subjclass[2010]{ 55N20, 55R12, 55R40}

\maketitle

\begin{abstract}
Let $G$ be an algebraic group over $\bC$ corresponding a compact simply connected Lie group.
When $H^*(G)$ has $p$-torsion, we see
$\rho^*_{CH}: CH^*(BG)\to CH^*(BT)^{W_G(T)}$
is always not surjective.  We also study the algebraic cobordism version $\rho^*_{\Omega}$.
In particular when $G=Spin(7)$ and $p=2$, we see each
Griffiths element in $CH^*(BG)$ is detected by an element
in $\Omega^*(BT)$,  
\end{abstract}

\section{Introduction}
Let $p$ be a prime number.
Let $G$ be a compact Lie group
and $T$ the maximal torus. 
Let  us write $H^*(-)=H^*(-;\bZ_{(p)})$, and  $BG,BT$  classifying spaces of $G,T$. 
  Let $W=W_G(T)=N_G(T)/T$ be the Weyl group
and $Tor\subset H^*(BG)$
be the ideal generated by torsion elements.
Then we have the restriction map 
\[ \rho^*_{H}: H^*(BG)\to H^*(BG)/Tor \subset H^*(BT)^W\]
by using the Becker-Gottlieb transfer.

It is well known that when $H^*(G)$ is $p$-torsion free
(hence $H^*(BG)$ is $p$-torsion free), then $\rho^*_H$ surjective.  However when $H^*(G)$ has $p$-torsion,
there are cases that $\rho^*_H$ are not surjective,
which are founded  by
Feshbach [Fe].

Let us write by $G_{\bC}, T_{\bC}$ the 
reductive group over $\bC$
and its maximal torus corresponding the Lie group $G,T$.
Let us write simply $CH^*(BG)=CH^*(BG_{\bC})_{(p)}$,
$CH^*(BT)=CH^*(BT_{\bC})_{(p)}$ the Chow rings
of $BG_{\bC}$ and $BT_{\bC}$
localized at $p$. 
We consider the Chow ring version
\[ \rho_{CH}^* : CH^*(BG)\to  CH^*(BG)/Tor \subset 
 CH^*(BT)^{W}\cong H^*(BT)^W.\]
 Our first observation is
\begin{thm} Let $G$ be simply connected.
If $H^*(G)$ has $p$-torsion, then the map
$\rho^*_{CH}$ is always not surjective.
\end{thm}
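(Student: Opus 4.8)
The plan is to compare $\rho^*_{CH}$ with the topological restriction $\rho^*_H$ through the cycle class map, dispose of the cases where $\rho^*_H$ itself fails, and then show that in the remaining cases the image of $\rho^*_{CH}$ is already forced by algebraic cobordism to be too small. Since a compact simply connected group is a product of simple factors, $H^*(G)$ has $p$-torsion exactly when one factor does, and $CH^*(B(-))$, $CH^*(BT)$ and the $W$-action are all compatible with products, we may and do assume $G$ is simple.

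Because $T_\bC$ is split, the cycle map $cl_{BT}\colon CH^*(BT)\to H^{2*}(BT)$ is an isomorphism carrying $CH^*(BT)^W$ onto $H^{2*}(BT)^W$, and by naturality of the cycle map $cl\colon CH^*(BG)\to H^{2*}(BG)$ it identifies $\rho^*_{CH}$ with the composite of $cl$ and restriction to $BT$. Hence $cl_{BT}$ carries $\Img(\rho^*_{CH})$ onto $\mathrm{res}_H(\Img\, cl)$, so
\[
\Img(\rho^*_{CH})\ \cong\ \mathrm{res}_H\!\bigl(\Img\, cl\bigr)\ \subseteq\ \Img(\rho^*_H)\cap H^{2*}(BT)^W .
\]
If $\rho^*_H$ is not surjective — Feshbach's phenomenon for certain $Spin(n)$ — we are done immediately. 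So the essential case is that $\rho^*_H$ is surjective, i.e.\ $H^*(BG)/Tor\cong H^*(BT)^W$, and the task becomes to show that the cycle map $cl$ fails to surject, after restriction to $BT$, onto the even part.

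For this I would use that $cl$ factors through algebraic and hence topological cobordism,
\[
CH^*(BG)\ \cong\ \Omega^*(BG)\otimes_{\mathbb L}\bZ\ \longrightarrow\ MU^{2*}(BG)\otimes_{MU^*}\bZ\ \longrightarrow\ H^{2*}(BG),
\]
and $p$-locally through $\OBP^*(BG)$ and its truncations $\on$. Thus $\Img\, cl$ is contained in the classes surviving to $E_\infty$ in the (motivic, resp.\ topological) Atiyah--Hirzebruch spectral sequence, whose differentials are built from the Milnor operations $Q_i$ together with the Tate twist $\tau$. Since $H^*(G)$ has $p$-torsion, Kudo's transgression theorem yields a class in $H^*(BG;\bF_p)$ on which some $Q_i$ is nonzero, and from this one locates an element $y$ of $H^{2*}(BT)^W\cong(H^*(BG)/Tor)^{\mathrm{even}}$ such that every integral lift of $y$ to $H^{2*}(BG;\bZ)$, regarded as the leading term of a would-be cobordism lift, is hit by a nonzero $Q_i$ with $i\ge 1$; equivalently, such a lift would have to carry $p$-torsion in $\OBP^*(BG)$ in a degree where its (known) additive structure has none. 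As $CH^*(BG)=\OBP^*(BG)\otimes_{BP^*}\bZ_{(p)}$, no such lift exists, so $y\notin\Img(\rho^*_{CH})$ and $\rho^*_{CH}$ is not surjective. The same analysis shows $\Img(\rho^*_{CH})$ lies in the subring $\bch\subseteq H^{2*}(BT)^W$ detected through $\OBP^*(BG)$, which one must then see is a proper subring exactly when $p$-torsion is present.

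The main obstacle is this last step, the essential difficulty being that the obstruction is usually invisible to ordinary topology: the first differential $d_{2p-1}\sim Q_1$ already vanishes on the natural candidate for $y$ — for $G_2$ at $p=2$, on the degree-$12$ generator, since there $Q_1$ of the relevant torsion source is the mod-$p$ reduction of a torsion class and so is Bockstein-trivial — so one cannot merely appeal to non-realizability in $MU$ and must extract the obstruction from the finer structure of $\OBP^*(BG)$, where the Tate twist makes the relevant $Q_i$-value nonzero. I would carry this out through the known presentations of $\OBP^*(BG)$, reducing by the classification of torsion primes of simple groups to the minimal pairs $Spin(7)$ and $G_2$ at $p=2$, $F_4,E_6,E_7$ at $p=3$, and $E_8$ at $p=2,3,5$, verifying the obstruction in each (the case $Spin(7)$, $p=2$ is the computation of $\rho^*_{CH}$ and $\rho^*_\Omega$ carried out below), and then transporting the missed invariant along the standard inclusions through which these groups sit inside the larger simple ones, so that the $p$-torsion and the failure of surjectivity propagate.
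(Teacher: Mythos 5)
Your proposal has the correct ingredients (Milnor operations annihilate $CH^*/p$, so a class with nonzero $Q_i$ cannot lift to the Chow ring), but it misses the one simple observation that makes the theorem uniform and easy, and as a result spirals into an incomplete case analysis. The paper's proof hinges on the following two facts: (i) if $H^*(G)$ has $p$-torsion and $G$ is simply connected, then $G$ contains a copy of $G_2$ (for $p=2$), $F_4$ (for $p=3$), or $E_8$ (for $p=5$), and the restriction $H^4(BG)\to H^4(BG_2)\cong\bZ_{(p)}$ (etc.) is surjective; and (ii) on the degree-$4$ generator $x_4$ one has $Q_1 x_4\ne 0$ mod $p$. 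Since $Q_i$ vanishes on $H^{2*,*}(X;\bZ/p)=CH^*(X)/p$ by Voevodsky, $x_4\notin\Img(cl)$, while $\rho^*\otimes\bQ$ is an isomorphism so $\rho_H^*(x_4)\neq 0$; hence $\rho_H^*(x_4)\notin\Img(\rho_{CH}^*)$. No Atiyah--Hirzebruch spectral sequence, no cobordism, no case split on whether $\rho_H^*$ is surjective, and no reduction over the full list of torsion pairs is needed.

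Concretely, the gap in your argument is at the step ``from this one locates an element $y$ \dots.'' You do not actually produce such a $y$; instead you note that for $G_2$ at $p=2$ the ``natural candidate'' (a degree-$12$ class) has $Q_1$ trivial, and you propose to repair this by a finer analysis of $\OBP^*(BG)$ and a case-by-case verification over minimal pairs. But this never terminates in a proof: you would still have to establish the obstruction for each minimal pair and then transport it, and your list of minimal pairs is not the one the argument requires (the relevant minimal subgroups are precisely $G_2$, $F_4$, $E_8$ for $p=2,3,5$, not, e.g., $E_8$ at $p=2$ or $p=3$). The degree-$12$ difficulty you flag is real for that class, but it is irrelevant: the obstruction lives already in degree $4$, where $Q_1 x_4\neq 0$ and $2x_4=c_2\in\Img(cl)$, making the nonsurjectivity of $\rho^*_{CH}$ in $CH^2$ immediate. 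Your framing via $\OBP^*$, $\on$, and $\bch$ is compatible with what the paper does later (for Griffiths elements in $\S\S 5$--$6$), but it is not needed for Theorem~1.1 and, as written, does not close the argument.
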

In the proof, we use an element $x\in H^4(BG)$ with
$\rho_{H}(x)\not \in Im(\rho_{CH}^*)$.  
Hence $x\not \in Im(cl)$ for the cycle map $cl:CH^*(BG)\to H^*(BG)$. The corresponding element $1\otimes x\in CH^*(B\bG_m\times BG)$ is the element founded as a counterexample for the integral Hodge and hence the integral Tate conjecture in [Pi-Ya].

Next, we consider elements in $Tor$.
To study torsion elements, we consider the following restriction map
\[ res_H : H^*(BG) \to \Pi_{A:abelian \subset G} H^*(BA)^{W_G(A)}. \]
  There are cases such that $res_{H}$ is not injective, while for many cases $res_H$ are injective.
We consider the Chow ring version ([To1,2]) of the above restriction map
\[ res_{CH} : CH^*(BG) \to \Pi_{A:ab.} CH^*(BA)^{W_G(A)}
\subset \Pi_{A:ab.}H^*(BA)^{W_G(A)}. \]
In general $res_{CH}$ has non zero kernel.
In particular, elements in $Ker(cl)$ (i.e. Griffiths elements)
for the cycle map $cl: CH^*(BG)\to H^*(BG)$ are always
in $Ker(res_{CH})$. Namely Griffiths elements are not detected
by $res_{CH}$.

On the other hand, if the Totaro conjecture
\[ CH^*(BG)\cong BP^*(BG)\otimes_{BP^*}\bZ_{(p)}\]
(for the Brown-Peterson cohomology $BP^*(-)$)
is correct,  then of course all elements in $CH^*(BG)$ are detected by elements in $BP^*(BG)$.
We show that there is a case that Griffiths elements are detected by $\rho^*_{\Omega}$ the restriction for
algebraic cobordism theory $\Omega^*(-)$.

Let $\Omega^*(X)=MGL^{2*,*}(X)\otimes _{MU_{(p)}^*}BP^*$ be the $BP$-version of the algebraic cobordism defined by Voevodsky, Levine-Morel ([Vo1], [Le-Mo1,2] such that
$ CH^*(X)\cong \Omega^*(X)\otimes_{BP^*}\bZ_{(p)}.$
In particular, we consider the case $G=Spin(7)$ and $p=2$.
%The number of  conjugacy classes 
%of maximal abelian subgroup is two ; one is $T$ and the other 
%is $A'\cong (\bZ/2)^4$.  
We note that there are (non zero)
Griffiths elements in $CH^*(BG)$.
\begin{thm}  Let $G=Spin(7)$ and $p=2$. 
% Then
%\[ \rho_{\Omega}^* :\Omega^*(BG)
%\subset  \Omega^*(BA')^{W_G(A')}\times \Omega^*(BT)^{W}\]
%is injective.  In particular, 
Then each Griffiths element (in $CH^*(BG)$)
is detected
by an element in $\Omega^*(BT)^W\cong BP^*(BT)^W$.
\end{thm}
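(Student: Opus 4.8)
The plan is to locate the Griffiths elements of $CH^*(BSpin(7))$ explicitly, then to lift them to $\Omega^*(BSpin(7))$ and show that their image under $\rho^*_\Omega$ in $\Omega^*(BT)^W$ is nonzero, even though the image under $\rho^*_{CH}$ in $CH^*(BT)^W$ vanishes (which is precisely what it means for them to be Griffiths elements, since $\rho^*_{CH}$ factors through $res_{CH}$). First I would recall the additive and multiplicative structure of $CH^*(BSpin(7))_{(2)}$ and of $H^*(BSpin(7);\bZ_{(2)})$ from the work of Kono--Mimura, Quillen and Totoro: the relevant $2$-torsion in $H^*(BSpin(7))$ is a single polynomial generator (the image of the degree-$7$ element of $H^*(Spin(7);\bZ/2)$ under the transgression, giving a torsion class which in the Chow ring survives as the Griffiths class), and the subring $CH^*(BSpin(7))$ is generated by Chern classes of the spin and vector representations together with this extra torsion generator. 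I would pin down the degree (it sits in $CH^4$ or a small multiple thereof) and confirm, via Totaro's computation, that the ideal $Ker(cl)$ is the principal (or at least finitely generated) ideal spanned by the products of this torsion class with the polynomial part.

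**The cobordism lift.**

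The key step is then: since $CH^*(X) \cong \Omega^*(X)\otimes_{BP^*}\bZ_{(p)}$, a Griffiths element $\gamma \in Ker(cl)\subset CH^*(BG)$ need not itself lift to $\Omega^*(BG)$, but some class of the form $v_1\cdot\tilde\gamma$ (with $v_1\in BP^*$ the Araki or Hazewinkel generator in degree $-2(p-1)=-2$) will; more precisely, I would use the known structure of $\Omega^*(BSpin(7))$ — which can be extracted from the Atiyah--Hirzebruch spectral sequence, whose first nontrivial differential $d_{2p-1} = d_3$ on the relevant torsion class is given by $Q_1$ (the Milnor primitive), so that the torsion in $CH^*$ is "resolved" in $\Omega^*$ by an element whose leading term involves $v_1$. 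Concretely, there is an element $y\in\Omega^*(BSpin(7))$ with $y \equiv v_1\cdot(\text{something}) \pmod{I(2,v_1^2,\dots)}$ mapping to the Griffiths class after $\otimes_{BP^*}\bZ_{(2)}$ appropriately, and I would compute $\rho^*_\Omega(y)\in \Omega^*(BT)^W$. Because $BT$ is a product of $\bG_m$'s, $\Omega^*(BT)\cong BP^*[[t_1,\dots,t_r]]$ with $t_i$ the cobordism Euler classes, and the Weyl-invariant subring is well understood; the point is that $\rho^*_\Omega(y)$ is a $v_1$-multiple of a genuinely nonzero power series, which does not vanish even though its image in $CH^*(BT)^W = H^*(BT)^W$ (killing $v_1$) does.

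**The main obstacle.**

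The hard part will be showing that $\rho^*_\Omega(y)\ne 0$ rather than merely that $y\ne0$: one must rule out that the transfer/restriction to the torus annihilates the $v_1$-part. I would handle this by working modulo $(2, v_1^2, v_2, \dots)$, i.e. in $\bF_2[v_1]/(v_1^2)\otimes H^*(BT;\bF_2)^W$, and tracking the $Q_1$-Bockstein: the class $y$ restricts, by naturality of the Atiyah--Hirzebruch spectral sequence applied to $BT\to BG$, to an element whose $v_1$-coefficient is $Q_1$ of the corresponding polynomial generator in $H^*(BT;\bF_2)^W$, and this $Q_1$-image is nonzero because the relevant generator is not in the image of the Frobenius (it is a genuine odd-degree-origin class). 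A secondary obstacle is bookkeeping: correctly identifying which Weyl-invariant polynomial is hit, and checking that no cancellation occurs against lifts of non-Griffiths classes — this I would resolve by a dimension count in the appropriate graded piece, comparing $\dim_{\bF_2} CH^*(BG)/Ker(cl)$ in the relevant degrees with the rank of the image of $res_\Omega$, using Totaro's and Kono--Mimura's data. Once the nonvanishing of the $v_1$-coefficient is established, the conclusion that every Griffiths element is detected follows because $Ker(cl)$ is generated over $CH^*(BG)$ by the single torsion class, and multiplication by polynomial generators commutes with $\rho^*_\Omega$.
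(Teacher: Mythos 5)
Your high-level shape (make the Griffiths ideal explicit from Guillot/Totaro/Kono--Yagita, pass to the Atiyah--Hirzebruch spectral sequence for $BP^*(BG)$ with Milnor-$Q_i$ differentials, detect a lift of the Griffiths class by a $v_1$-multiple of a class in $BP^*(BT)^W$) is indeed the paper's strategy. But several of the concrete steps you propose are wrong or would fail.

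First, a Griffiths element $\gamma\in CH^*(BG)$ always lifts to $\Omega^*(BG)$: the map $\Omega^*(X)\to\Omega^*(X)\otimes_{BP^*}\bZ_{(p)}\cong CH^*(X)$ is a quotient map, hence surjective. The real content is that a lift $\tilde\gamma$ of the paper's class $\xi_3\in CH^3(BSpin(7))$ has $\rho^*_\Omega(\tilde\gamma)=v_1e\ne0$ in $BP^*(BT)^W$ (where $e=w_8(\Delta)$), even though $cl(\xi_3)=0$ in $H^*(BG)$. Your identification of the Griffiths class as ``the image of the degree-$7$ element of $H^*(Spin(7);\bZ/2)$ under transgression'' is also off: that transgression gives $e=w_8$, which is not a torsion class and is not in $\Ker(cl)$ --- it is precisely the class to which $\xi_3$ is a ``$v_1$-shadow.'' And the Griffiths generator lives in $CH^3$, not $CH^4$.

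Second, for $Spin(7)$ one has $h=3$, so by Lemma~4.3 of the paper $Q_0e=Q_1e=0$ and the first nonzero $Q$-differential on $e$ is $d_7(e)=v_2Q_2(e)=v_2d_0e$, not the $d_3=Q_1$ differential you invoke. The survival of $v_1e$ (and $2e$) comes from the fact that the target $v_1d_0e$ is already killed in lower pages by $d_3(d_2e)=v_1d_0e$; this is Lemma~5.2 and it hinges on $Q_2$, not $Q_1$.

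Third, and most seriously, your proposed mechanism for $\rho^*_\Omega(y)\ne0$ --- that the $v_1$-coefficient of the restriction equals $Q_1$ of a polynomial generator of $H^*(BT;\bF_2)^W$ --- cannot work, because $H^*(BT;\bF_2)$ is a polynomial algebra on degree-$2$ generators and every Milnor $Q_i$ annihilates it (a derivation vanishing on generators is zero). The actual nonvanishing is much easier: the AHss for $BP^*(BT)$ collapses, $BP^*(BT)$ is torsion-free, and the associated graded image of $\rho^*_\Omega(y)$ is $v_1\rho^*_H(e)\ne0$, so no cancellation can occur. What is genuinely hard, and what your sketch does not address, is showing that $v_1e$ is actually in the image of $\rho^*_\Omega$ on $\Omega^*(BG)$ (not merely a permanent cycle in the topological $BP^*$ spectral sequence). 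The paper handles this via the algebraic Morava $K$-theory comparison $(**)$, the injectivity of $cl:CH^2\to H^4$ from Totaro to pin $s=1$ in $v_{h-2}^se$ (Lemma~5.4), and then Landweber--Novikov operations to propagate from $v_{h-2}e$ down to $v_je$ for $j\le h-2$ (Lemma~5.1); none of these appear in your proposal, and a ``dimension count'' does not substitute for them. Finally, to cover all Griffiths elements (not just $\xi_3$) you need that $Grif\cong D/2\{\xi_3\}$ as a $D=\bZ_{(2)}[c_4,c_6,c_8]$-module and that multiplication by the $c_i$ remains nonzero after $\rho^*_\Omega$, which holds because $BP^*(BT)$ is a domain; this last closing step you do gesture at correctly.
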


In $\S 2$ we study the map $\rho^*_{H}$ for the ordinary
cohomology theory, and recall  Feshbach's result.
In $\S 3$, we study the Chow ring version and show Theorem1.1.
In $\S 4$, we study the case $G=Spin(n)$.
In $\S 5$, we study the $BP^*$-version and 
the algebraic cobordism version for the restriction $\rho^*$.
In $\S 6$, we write down the case $G=Spin(7)$
quite explicitly, and show Theorem 1.2.
In the last section, we note some partial results for
the exceptional group $G=F_4$ and $p=3$.

The author thanks Kirill Zainoulline to start considering  this problem,
and Masaki Kameko  who gives many comments and suggestions,  and lets the author know 
works by Benson-Wood and Feshbach.

\section{cohomology theory and Feshbach theorem}

Let $p$ be a prime number.
Let $G$ be a compact Lie group
and $T$ the maximal Torus.
Then we have the restriction map
\[ \rho^*_H: H^*(BG)\to H^*(BT)^W\]
where $H^*(-)=H^*(-;\bZ_{(p)})$, $BG,BT$ are classifying spaces
and $W=W_G(T)=N_G(T)/T$ is the Weyl group.

It is well known that when $H^*(G)$ is $p$-torsion free, then $\rho^*_H$ is surjective (and hence is isomorphic).  However when $H^*(G)$ has $p$-torsion,
there are cases that $\rho^*_H$ are not surjective by
Feshbach. 

For a connected  compact Lie group $G$, we have the Becker-Gottlieb transfer
$\tau:H^*(BT)\to H^*(BG)$ such that $\tau\rho_H^*=\chi(G/T)$
for the Euler number $\chi(-)$, and $\rho_H^*\tau(x)=\chi(G/T)x$ for $x\in H^*(BT)^W$.  Let $\chi(G/T)=N$ and $Tor$ be the ideal of
$H^*(BG)$ generated by torsion elements.  Then we have the
injections 
\[ N\cdot H^*(BT)^W \subset H^*(BG)/Tor \subset H^*(BT)^W.\]
Feshbach found good criterion to see $\rho^*_H$ is surjecive.
\begin{thm} (Feshbach [Fe]) The restriction $\rho_H^*$
is surjective if and only if $(H^*(BG)/Tor)\otimes \bZ/p$ has not  nonzero nilpotent elements.
\end{thm}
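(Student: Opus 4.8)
The plan is to discard all topology beyond the displayed transfer estimate and argue purely at the level of graded rings. Put $S=H^*(BT)^W$ and $R=\Img(\rho^*_H)=H^*(BG)/Tor$; both are finitely generated and free as $\bZ_{(p)}$-modules in each degree, and by the estimate recalled just above one has $N\cdot S\subseteq R\subseteq S$ with $N=\chi(G/T)$, so that surjectivity of $\rho^*_H$ is precisely the statement $R=S$. Fix $k\ge 0$ with $N=p^{k}u$, $u\in\bZ_{(p)}^{\times}$, so that $p^{k}S\subseteq R$; if $k=0$ then $R=S$ and both sides of the claimed equivalence hold, so I may assume $k\ge 1$ whenever $R\neq S$. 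The whole argument then rests on one lemma, which I regard as the only non-formal ingredient: \textbf{the ring $S\otimes\bZ/p$ is reduced.}

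To prove that lemma I would use that $W$ acts by automorphisms of the polynomial ring $H^*(BT)=\bZ_{(p)}[t_1,\dots,t_l]$ and that the coefficients are torsion-free. Indeed, if $f\in H^*(BT)$ and $pf\in S$, then for each $w\in W$ we get $p\bigl(w(f)-f\bigr)=w(pf)-pf=0$ in the torsion-free group $H^*(BT)$, hence $w(f)=f$ and $f\in S$. Therefore $S\cap p\,H^*(BT)=pS$, so reduction mod $p$ induces an injection $S/pS\hookrightarrow H^*(BT;\bZ/p)=\bF_p[t_1,\dots,t_l]$; a subring of a polynomial ring over a field has no nonzero nilpotents, which is the lemma. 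This step is the main obstacle in the sense that it is the one ingredient that forces $\Img(\rho^*_H)$ to exhaust $H^*(BT)^W$; the rest is bookkeeping with the conductor-type inclusion $p^{k}S\subseteq R$.

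Granting the lemma, the forward implication is immediate: if $\rho^*_H$ is surjective then $(H^*(BG)/Tor)\otimes\bZ/p=R\otimes\bZ/p=S\otimes\bZ/p$ has no nonzero nilpotents. For the converse I would prove the contrapositive by exhibiting an explicit nilpotent. Suppose $R\subsetneq S$ (so $k\ge1$) and pick any $s_0\in S\setminus R$; then $p^{k}s_0\in R$, so there is a least $j\ge 1$ with $p^{j}s_0\in R$, and $s:=p^{j-1}s_0$ lies in $S\setminus R$ while $ps\in R$. Set $z:=ps\in R$. Then $z\notin pR$, since $z=pr$ with $r\in R$ would give $s=r\in R$ by torsion-freeness; so the class of $z$ in $R/pR$ is nonzero. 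On the other hand
\[
z^{\,k+1}=p^{\,k+1}s^{\,k+1}=p\cdot\bigl(p^{\,k}s^{\,k+1}\bigr)\in pR,
\]
because $s^{k+1}\in S$ and $p^{k}S\subseteq R$. Hence the class of $z$ is a nonzero nilpotent of $(H^*(BG)/Tor)\otimes\bZ/p$, which establishes the contrapositive and completes the plan.
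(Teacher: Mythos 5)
Your proposal is correct and takes essentially the same route as the paper: the same key lemma that $H^*(BT)^W\otimes\bZ/p$ is reduced (proved from torsion-freeness of $H^*(BT)$ and the polynomial ring structure), the same transfer inclusion $N\cdot H^*(BT)^W\subset H^*(BG)/Tor$, and in the non-surjective case the same device of producing an element of the image, nonzero mod $p$, some power of which lies in $p\cdot Im(\rho^*_H)$. The only difference is bookkeeping: the paper raises its element $y$ (with $\rho_H^*(y)=p^sx$, $s$ minimal) to the power $N=\chi(G/T)$, whereas you normalize to the case $s=1$ and use the exponent $k+1$, where $p^k$ is the exact power of $p$ dividing $N$.
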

\begin{proof} (Feshbach)
First note that $H^*(BT)\cong \bZ_{(p)}[t_1,...,t_{\ell}]$
for $|t_i|=2$.  Hence if $x^m=px'$ in $H^*(BT)$, then $x=px''$ for $x''\in H^*(BT)$.  Moreover if $x\in H^*(BT)^W$, then so
is $x'$ since $H^*(BT)$ is $p$-torsion free. Thus we see $H^*(BT)^W\otimes \bZ/p$ has no non zero nilpotent elements.

Assume that $\rho_H^*$ is not surjective, and
$x\in H^*(BT)^W$ but  
$x\not \in Im(\rho^*_H)$.  Let $s\ge 1$ be the smallest number such that $p^sx=\rho^*_H(y)$ for some $y\in H^*(BG)$.
Hence  $y\not =0\ mod(p)$.  Then
\[ \rho_H^*(y^N)=(p^sx)^N=p^{sN}x^N\in pN\cdot H^*(BT)^W\subset pIm(\rho^*_H).\]
This means $y$ is a nonzero nilpotent element in
$(H^*(BG)/Tor)\otimes \bZ/p$.
\end{proof}

Using this theorem, Feshbach [Fe] showed $\rho_H^*$ is surjective
for $G=G_2$, $Spin(n)$ for $n\le 10$, and is not surjective
for $Spin(11), Spin(12)$.  Wood [Wo] showed that $Spin(13)$
is not surjective but $Spin(n)$ for $14\le n\le 18$ are surjective.  Benson and Wood solved this problem completely,
namely $\rho^*$ is not surjective if and only if
$n\ge 11$ and $n=3,4,5\ mod(8)$.  

For odd prime,  we consider $mod(p)$ version
\[ \rho_{H/p}: H^*(BG;\bZ/p)\to H^*(BT;\bZ/p)^W\cong 
(H^*(BT)/p)^W.\] 
It is known that $\rho^*_{H/p}$ are surjective when $G=F_4$ for $p=3$ by Toda [Tod] using a completely different 
arguments.
Also different arguments (but without computations of $H^*(BT)^W$ for concrete cases), Kameko and Mimura [Ka-Mi] prove that 
$\rho^*_{H/p}$ are surjective when $G=E_6
,E_7$ for $p=3$ and $G=E_8$ for $p=5$.
(The only remain case is $G=E_8$, $p=3$ for odd primes.)

Kameko-Mimura get more strong result.
Recall the Milnor $Q_i$ operation
\[ Q_i: H^*(X;\bZ/p)\to H^{*+2p^i-1}(X;\bZ/p)\]
defined by $Q_0=\beta$ and $Q_{i+1}=[P^{p^i}Q_i,Q_iP^{p^i}]$
for the Bockstein $\beta$ and the reduced powers $P^j$.
\begin{thm} (Kameko-Mimura [Ka-Mi]) Let $G=F_4,E_6.E_7$ for
$p=3$ or $E_8$ for $p=5$.  Let us write a generator
by $x_4$ in $H^4(BG)\cong \bZ_{(p)}$.  Then we have 
 \[H^*(BT;\bZ/p)^W\cong H^{even}(BG;\bZ/p)/(Q_1Q_2x_4).\]
\end{thm}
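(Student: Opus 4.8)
The plan is to start from the surjectivity of $\rho^*_{H/p}$ — proved by Toda [Tod] for $G=F_4$ and by Kameko--Mimura [Ka-Mi] for $E_6,E_7,E_8$, as recalled above — and then to compute its kernel. Since $H^*(BT;\bZ/p)\cong\bF_p[t_1,\dots,t_\ell]$ ($\ell=\mathrm{rank}\,G$, $|t_i|=2$) is concentrated in even degrees, $\rho^*_{H/p}$ annihilates $H^{odd}(BG;\bZ/p)$ and induces a surjection $\bar\rho\colon H^{even}(BG;\bZ/p)\twoheadrightarrow H^*(BT;\bZ/p)^W$; the theorem is equivalent to the identification $\Ker\bar\rho=(Q_1Q_2x_4)$, which splits into the easy inclusion $(Q_1Q_2x_4)\subseteq\Ker\bar\rho$ and the hard reverse inclusion.

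The easy inclusion is formal: Milnor operations are natural, so $\rho^*_{H/p}(Q_iz)=Q_i\rho^*_{H/p}(z)$ for every $z$ and every $i$, and since $\rho^*_{H/p}(z)$ lies in the evenly graded ring $H^*(BT;\bZ/p)$ while $Q_i$ raises degree by the odd number $2p^i-1$, the right-hand side lies in $H^{odd}(BT;\bZ/p)=0$. Thus $\Img Q_i\subseteq\Ker\rho^*_{H/p}$ for all $i$; in particular $Q_1Q_2x_4=Q_1(Q_2x_4)\in\Ker\rho^*_{H/p}$, and as $Q_1Q_2x_4$ has even degree $2p^2+2p+2$ and $\Ker\bar\rho$ is an ideal of $H^{even}(BG;\bZ/p)$, the whole ideal $(Q_1Q_2x_4)$ lies in $\Ker\bar\rho$.

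For the hard inclusion I would combine three ingredients. First, the explicit ring and Steenrod-module structure of $H^*(BG;\bZ/p)$ for these $(G,p)$ — known from Toda [Tod] for $BF_4$ and from subsequent work for $BE_6,BE_7,BE_8$ — in which all odd-degree generators and all the nilpotence in the even part are produced, over the polynomial ``toral'' subring, from $x_4$ and its Milnor images $Q_1x_4$, $Q_2x_4$, $Q_1Q_2x_4$. Second, Quillen's detection theorem: $H^*(BG;\bZ/p)$ modulo its nilradical is detected on the elementary abelian $p$-subgroups, which for these $(G,p)$ reduce to $T$ together with (the subgroups of) an essentially unique maximal non-toral $E\cong(\bZ/p)^3$ with $W_G(E)\le GL_3(\bF_p)$; hence $\Ker\rho^*_{H/p}$ is seen faithfully, modulo nilpotents, through the restriction $\rho^*_E$ to $H^*(BE;\bZ/p)^{W_G(E)}$. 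Third, the mod-$p$ invariant theory on both sides: the description of $H^*(BT;\bZ/p)^W$, which is genuinely modular here ($p\mid|W|$) and carries generators beyond the classical degrees, and of $H^*(BE;\bZ/p)^{W_G(E)}$, of Dickson type. One then checks that $\rho^*_E$ maps $\Ker\bar\rho$ onto the submodule of $H^*(BE;\bZ/p)^{W_G(E)}$ generated by the bottom Dickson class — which is the image of $Q_1Q_2x_4$ — so that $\bar\rho$ induces an isomorphism $H^{even}(BG;\bZ/p)/(Q_1Q_2x_4)\xrightarrow{\ \sim\ }H^*(BT;\bZ/p)^W$; a count with Poincar\'e series — using the known series of $H^*(BG;\bZ/p)$, the annihilator of $Q_1Q_2x_4$, and the mod-$p$ Molien series of $W$ — makes this last step effective.

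The main obstacle is precisely this hard inclusion, namely showing that $(Q_1Q_2x_4)$ already exhausts $\Ker\bar\rho$ and that no further ``non-toral'' relation intervenes. This rests on the detailed — and for $E_7$ and $E_8$ lengthy — computations of $H^*(BG;\bZ/p)$ as an algebra over the Steenrod algebra, and on the modular invariant theory of the Weyl groups $W$; the achievement of Kameko--Mimura's argument is to handle all four cases uniformly by reducing everything to the single generator $x_4\in H^4(BG)\cong\bZ_{(p)}$ and the operations $Q_1,Q_2$, so that $\Ker\bar\rho$ is in every case the principal ideal $(Q_1Q_2x_4)$.
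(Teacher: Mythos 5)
The paper offers no proof of this statement: Theorem~2.2 is quoted verbatim from Kameko--Mimura [Ka-Mi] and is used only as an input to Corollary~2.3 (via Lemma~2.4). So there is no ``paper's own proof'' to compare against, and your sketch can only be judged on its own terms.

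Your reduction and the easy inclusion are correct. Since $H^*(BT;\bZ/p)$ is evenly graded, $\rho^*_{H/p}$ kills odd degrees and factors through $H^{even}(BG;\bZ/p)$; naturality of $Q_i$ together with $Q_i$ raising degree by the odd number $2p^i-1$ gives $\rho^*_{H/p}(Q_2 x_4)=Q_2\rho^*_{H/p}(x_4)=0$ (odd degree in an even ring) and hence $\rho^*_{H/p}(Q_1Q_2x_4)=0$. As $\Ker\bar\rho$ is an ideal and $Q_1Q_2x_4$ has even degree $2p^2+2p+2$, the whole ideal $(Q_1Q_2x_4)$ lies in $\Ker\bar\rho$. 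This much is fine.

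The reverse inclusion $\Ker\bar\rho\subseteq (Q_1Q_2x_4)$ --- which is all the content of the theorem --- is only gestured at. Your ``one then checks that $\rho^*_E$ maps $\Ker\bar\rho$ onto the bottom Dickson class'' followed by ``a count with Poincar\'e series'' is a plan, not an argument, and it is exactly the step you yourself flag as the main obstacle. Two specific worries about the plan as stated. First, Quillen's theorem is an $F$-isomorphism: detection on elementary abelians controls $H^*(BG;\bZ/p)$ only modulo nilpotents, so it does not by itself bound the nilpotent part of $\Ker\bar\rho$, and the $Q_1Q_2x_4$-ideal sits precisely among the nilpotents; the Poincar\'e-series bookkeeping would have to be supplemented by the explicit Steenrod-module structure of $H^*(BG;\bZ/p)$ to handle this, and you never carry it out. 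Second, the paper explicitly remarks that Kameko--Mimura establish surjectivity for $E_6,E_7,E_8$ ``without computations of $H^*(BT)^W$ for concrete cases,'' which strongly suggests their route to the stronger Theorem~2.2 is more structural than the modular-invariant-theory / Molien-series computation you propose; in particular, for $E_7$ and $E_8$ the invariant rings $H^*(BT;\bZ/p)^W$ are not known in closed form, so a head-on comparison of Poincar\'e series is not actually available. The upshot: your easy direction is a valid (and standard) observation, but the hard direction remains unproved in your proposal.
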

\begin{cor} For $(G,p)$ in the above theorem,
$\rho_H^*$ is surjective.
\end{cor}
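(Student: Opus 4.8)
The plan is to deduce the integral surjectivity of $\rho^*_H$ from the mod $p$ surjectivity supplied by Theorem 2.3, via Feshbach's criterion (Theorem 2.1).

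First I would record that the isomorphism in Theorem 2.3 is the one induced by $\rho^*_{H/p}$. Indeed, each Milnor operation $Q_i$ raises degree by the odd number $2p^i-1$, while $H^*(BT;\bZ/p)$ is a polynomial algebra on classes of degree $2$ and hence vanishes in odd degrees; therefore $\rho^*_{H/p}(Q_1Q_2x_4)=Q_1Q_2(\rho^*_{H/p}x_4)=0$, so $\rho^*_{H/p}$ factors through $H^{even}(BG;\bZ/p)/(Q_1Q_2x_4)$, and Theorem 2.3 asserts precisely that the resulting map onto $H^*(BT;\bZ/p)^W$ is an isomorphism. In particular $\rho^*_{H/p}$ is surjective.

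By Theorem 2.1 it then suffices to show that $(H^*(BG)/Tor)\otimes\bZ/p$ has no nonzero nilpotent. Recall from $\S 2$ that $H^*(BG)/Tor\cong\Img(\rho^*_H)$ is a subring of the polynomial ring $H^*(BT)=\bZ_{(p)}[t_1,\dots,t_\ell]$, and that the transfer gives $\chi(G/T)\cdot H^*(BT)^W\subseteq\Img(\rho^*_H)$. Suppose $\xi\in(H^*(BG)/Tor)\otimes\bZ/p$ satisfies $\xi^m=0$, and lift $\xi$ to $\tilde\xi\in H^*(BG)/Tor\subseteq\bZ_{(p)}[t_1,\dots,t_\ell]$, so that $\tilde\xi^m=p\,\eta$ for some $\eta\in H^*(BG)/Tor$. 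Since $\bZ_{(p)}[t_1,\dots,t_\ell]$ is a unique factorization domain and $p$ is prime, $p$ divides $\tilde\xi$; writing $\tilde\xi=p\,\zeta$ one has $\zeta\in H^*(BT)^W$, because $p\,\zeta$ is $W$-invariant and $H^*(BT)$ is $p$-torsion free. If $\zeta\in\Img(\rho^*_H)$, then $\xi=[\tilde\xi]=[p\,\zeta]=0$ and we are done.

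The remaining point — which I expect to be the main obstacle — is exactly the claim that $\Img(\rho^*_H)$ is $p$-saturated in $H^*(BT)^W$. Since $H^*(BT)^W/\Img(\rho^*_H)$ is finite of $p$-power order, this is in fact equivalent to the conclusion of the corollary, so it cannot be extracted from the surjectivity of $\rho^*_{H/p}$ by diagram chasing alone: one must also control the discrepancy between $H^*(BT;\bZ/p)^W$ and $H^*(BT)^W\otimes\bZ/p$ (measured by $p$-torsion in $H^1(W;H^*(BT))$) and that between $H^*(BG;\bZ/p)$ and the mod $p$ reduction of $H^*(BG)$ (measured by the $p$-torsion of $H^*(BG)$). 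Here I would appeal to the explicit descriptions of $H^*(BG;\bZ_{(p)})$ available in the cases at hand (cf.\ [Tod], [Ka-Mi]): one reads off that the torsion is governed by the classes $Q_ix_4$, that these restrict trivially to $H^*(BT)^W$, and hence that $H^*(BG)/Tor$ is $p$-saturated in $H^*(BT)^W$ — equivalently, that $(H^*(BG)/Tor)\otimes\bZ/p$ is reduced. With this structural input the previous paragraph closes, so by Theorem 2.1 the map $\rho^*_H$ is surjective.
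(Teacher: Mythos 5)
Your proof takes a different route from the paper's, and it has a gap that you yourself flag but do not actually close.

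The paper does not go through Feshbach's criterion (Theorem 2.1) at all. It applies Lemma 2.4: one only needs the composite $(H^*(BG)/Tor)\otimes\bZ/p\to H^*(BT)^W/p\to H^*(BT;\bZ/p)^W$ to be injective, and the structural input supplied to check this is the remark preceding the corollary, namely that $Q_0(Q_1Q_2x_4)=0$, so $Q_1Q_2x_4$ lifts to a $p$-torsion class in $H^*(BG)$. Combined with Theorem 2.2, which identifies $H^*(BT;\bZ/p)^W$ with $H^{even}(BG;\bZ/p)/(Q_1Q_2x_4)$, this is exactly what is needed to see that a class of $H^*(BG)$ whose $\bmod\,p$ reduction lies in the ideal $(Q_1Q_2x_4)$ differs from a torsion class by a multiple of $p$. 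Your argument instead reduces matters to showing that $(H^*(BG)/Tor)\otimes\bZ/p$ has no nonzero nilpotents, and you correctly compute that this amounts to the $p$-saturation of $\Img(\rho^*_H)$ in $H^*(BT)^W$; you also correctly observe that this is equivalent to the statement to be proved, so Theorem 2.1 alone cannot supply it.

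The problem is what comes next. Your last paragraph acknowledges that the argument is circular at this point, and then appeals vaguely to ``the explicit descriptions of $H^*(BG;\bZ_{(p)})$'' and the assertion that ``the torsion is governed by the classes $Q_ix_4$, that these restrict trivially to $H^*(BT)^W$.'' Neither claim is substantiated, and the second one is not quite the relevant one: what must be controlled is the ideal $(Q_1Q_2x_4)$ inside $H^{even}(BG;\bZ/p)$, in relation to the image of the reduction map $H^*(BG)\to H^*(BG;\bZ/p)$. The single fact that actually does the work --- and which you never invoke --- is that $Q_0(Q_1Q_2x_4)=0$, forcing $Q_1Q_2x_4$ to come from a $p$-torsion integral class. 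Without that observation, or the structural computations of [Tod], [Ka-Mi] made precise, your final paragraph is a restatement of the difficulty rather than a resolution of it, so the proof as written is incomplete. Replacing the appeal to Theorem 2.1 by an application of Lemma 2.4, and inserting the $Q_0(Q_1Q_2x_4)=0$ observation, would bring your argument in line with the paper's.
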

Since its $Q_0$-image is zero, we can identify
the element $Q_1Q_2(x_4)$ is in $H^*(BG)$ and $p$-torsion.
The above corollary is immediate from the following lemma.
\begin{lemma} If the composition
\[    (H^*(BG)/Tor)\otimes \bZ/p  \to
    H^*(BT)^W/p \to H^*(BT;\bZ/p)^W\]
is injective,  then $\rho_{H}^*$ is surjective.
\end{lemma}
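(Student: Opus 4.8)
The plan is to derive the lemma immediately from Feshbach's criterion (Theorem 2.1), according to which $\rho_H^*$ is surjective precisely when $(H^*(BG)/Tor)\otimes \bZ/p$ carries no nonzero nilpotent element. Thus it suffices to show that the injectivity hypothesis forces $(H^*(BG)/Tor)\otimes \bZ/p$ to be reduced.

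First I would observe that the target ring $H^*(BT;\bZ/p)^W$ is reduced. Indeed $H^*(BT;\bZ/p)\cong \bZ/p[t_1,\dots,t_\ell]$ with $|t_i|=2$ is an integral domain, and a subring of a reduced ring is reduced; in particular the subring of $W$-invariants has no nonzero nilpotent element.

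Next I would note that the displayed composition is a ring homomorphism: the first arrow is the reduction mod $p$ of the ring inclusion $H^*(BG)/Tor\subset H^*(BT)^W$ induced by $\rho_H^*$, and the second arrow is the restriction to $W$-invariants of the ring map $H^*(BT)^W/p\to H^*(BT;\bZ/p)^W$ coming from reduction mod $p$ on $H^*(BT)$. If this composite ring homomorphism is injective and its target is reduced, then its source is reduced as well: any $\bar y\in (H^*(BG)/Tor)\otimes\bZ/p$ with $\bar y^{\,m}=0$ maps to $0$, hence $\bar y=0$. Therefore $(H^*(BG)/Tor)\otimes\bZ/p$ has no nonzero nilpotent element, and Feshbach's Theorem 2.1 yields that $\rho_H^*$ is surjective.

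There is essentially no obstacle here once Feshbach's theorem is in hand; the only things to verify are the purely formal fact that an injective ring homomorphism into a reduced ring has reduced source, and the elementary observation that rings of invariants inside polynomial rings are reduced. The real content — the equivalence between surjectivity of $\rho_H^*$ and the absence of nilpotents in $(H^*(BG)/Tor)\otimes\bZ/p$ — is supplied by Theorem 2.1.
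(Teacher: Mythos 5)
Your proof is correct, but it takes a genuinely different route from the paper's. The paper argues by contraposition directly and without invoking Theorem~2.1: if $\rho_H^*$ were not surjective, pick $y\in H^*(BT)^W$ outside the image and a minimal $s\ge 1$ with $p^s y=\rho_H^*(x)$ for some $x\in H^*(BG)$ that is nonzero mod $p$ in $H^*(BG)/Tor$ (minimality of $s$ forces this, using that $H^*(BT)$ is $p$-torsion free); then $x$ maps to $p^s y\equiv 0$ in $(H^*(BT)/p)^W$, so the composition kills a nonzero class and is not injective. You instead route through Feshbach's criterion: the composition is a ring homomorphism, the target $H^*(BT;\bZ/p)^W$ is reduced (subring of the polynomial ring $\bZ/p[t_1,\dots,t_\ell]$), so injectivity forces $(H^*(BG)/Tor)\otimes\bZ/p$ to be reduced, and Theorem~2.1 then gives surjectivity of $\rho_H^*$. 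Both arguments are sound; the paper's is more self-contained and produces the obstruction element explicitly, while yours is cleaner given Theorem~2.1 and makes transparent that the lemma is exactly the conjunction of the ``no nilpotents'' half of Feshbach's criterion with the formal fact that an injective ring map into a reduced ring has reduced source.
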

\begin{proof}
Let $\rho^*_H$ be not surjecive and $y\in H^*(BT)^W$
with $y\not \in Im(\rho_H^*)$.  Then $p^sy=\rho^*_H(x)$ for $s\ge 1$ and an
additive generator $x\in H^*(BG)/Tor$.  Of course
$x=0 \in (H^*(BT)/p)^W.$
\end{proof}

\section{Chow rings}

 Let us write by $G_{\bC}, T_{\bC}$ the 
reductive group over $\bC$
and its maximal torus corresponding the Lie group $G$.
Let $CH^*(BG)=CH^*(BG_{\bC})_{(p}$ be the Chow ring
of $BG_{\bC}$ localized at $p$. 

The arguments of Feshbach also work for Chow rings
since the Becker-Gottlieb transfer is constructed by Totaro [To2].
\begin{lemma}
The restriction map $\rho_{CH}^* $ of Chow rings
is surjective if and only if $(CH^*(BG)/T)\otimes \bZ/p$ has not
nonzero nilpotent elements.
\end{lemma}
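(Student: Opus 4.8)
The plan is to mimic Feshbach's cohomological argument verbatim, substituting Chow groups for singular cohomology and invoking Totaro's construction of the Becker--Gottlieb transfer $\tau:CH^*(BT)\to CH^*(BG)$ satisfying $\tau\rho^*_{CH}=\chi(G/T)=N$ and $\rho^*_{CH}\tau(x)=Nx$ for $x\in CH^*(BT)^W$. As in the cohomological case these relations yield the inclusions $N\cdot CH^*(BT)^W\subset CH^*(BG)/T\subset CH^*(BT)^W\cong H^*(BT)^W$, so the cokernel of $\rho^*_{CH}$ is annihilated by $N$, and since we have localized at $p$ only the $p$-part of $N$ matters.

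First I would prove the "only if" direction by establishing that $CH^*(BT)^W\otimes\bZ/p$ has no nonzero nilpotents: this is immediate because $CH^*(BT)\cong\bZ_{(p)}[t_1,\dots,t_\ell]$ with $|t_i|=1$ is a polynomial ring, exactly as in Feshbach's first paragraph, so if $x^m=px'$ in $CH^*(BT)$ then $x\in pCH^*(BT)$, and $W$-invariance of $x$ forces $W$-invariance of $x''$ with $x=px''$ since $CH^*(BT)$ is $p$-torsion free; hence if $\rho^*_{CH}$ is surjective, $(CH^*(BG)/T)\otimes\bZ/p$ embeds (via $\rho^*_{CH}$, which becomes injective modulo $T$) into a ring with no nilpotents and so has none itself.

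For the "if" direction I would argue the contrapositive, copying the displayed computation in Feshbach's proof: suppose $\rho^*_{CH}$ is not surjective and pick $x\in CH^*(BT)^W$ with $x\notin\Img(\rho^*_{CH})$; let $s\ge 1$ be minimal with $p^s x=\rho^*_{CH}(y)$ for some $y\in CH^*(BG)$, which exists because the cokernel is $p$-power torsion, and then $y\not\equiv 0\bmod p$. Then
\[ \rho^*_{CH}(y^N)=(p^s x)^N=p^{sN}x^N\in pN\cdot CH^*(BT)^W\subset p\,\Img(\rho^*_{CH}), \]
so $y^N\in p\,CH^*(BG)+T$, exhibiting $y$ as a nonzero nilpotent in $(CH^*(BG)/T)\otimes\bZ/p$, a contradiction.

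The only genuine points to check — and the place where one must be slightly careful rather than purely formal — are that Totaro's transfer really does satisfy the same projection-formula identities $\tau\rho^*_{CH}=N$ and $\rho^*_{CH}\tau=N$ on invariants with the geometric Euler characteristic $N=\chi(G/T)=|W|$, and that $CH^*(BT)$ is indeed a polynomial ring on degree-one classes so that the $p$-divisibility extraction step goes through; both are standard (the former from [To2], the latter from $BT=(B\bG_m)^{\ell}$ and the projective-bundle/homotopy-invariance computation of $CH^*(B\bG_m)\cong\bZ[t]$), so I do not expect a real obstacle here — the lemma is essentially a transcription of Feshbach's theorem into the algebraic setting, and the substance of the paper lies in the later sections that exploit it.
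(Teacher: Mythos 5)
Your proposal is correct and is exactly the argument the paper has in mind: the paper proves Lemma 3.1 by simply noting that Feshbach's argument transfers verbatim once Totaro's Becker--Gottlieb transfer for Chow rings is available, and you have written out precisely that transcription, including the two nontrivial inputs (the projection-formula identities for $\tau$ and the polynomial structure of $CH^*(BT)$).
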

However if $H^*(G)$ has $p$-torsion and $G$ is simply connected, then
$(CH^*(BG)/Tor)\otimes \bZ/p$ always has non zero nilpotent elements.  In fact,  $c_2=px_4\in CH^4(BG)$ in the proof of Theorem 3.3 below,
is nilpotent in $(CH^*(BG)/(Tor))\otimes \bZ/p$.
However from the proof of the above lemma, we note 
\begin{cor}
If $x\in CH^*(BT)^W$ but $x\not \in Im(\rho_{CH}^*)$,
then there is $y\in CH^*(BG)$ such that
$\rho_{CH}^*(y)=p^sx$ for some $s\ge 1$ and
$y$ is non zero
nilpotent element in $(CH^*(BG)/(Tor))\otimes \bZ/p$.
\end{cor}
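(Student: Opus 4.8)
The plan is to transplant Feshbach's argument (the proof of Theorem 2.1, equivalently the argument behind Lemma 3.1) to the Chow ring, the only extra input being Totaro's Becker--Gottlieb transfer $\tau:CH^*(BT)\to CH^*(BG)$, which satisfies $\tau\circ\rho^*_{CH}=\chi(G/T)$ on $CH^*(BG)$ and $\rho^*_{CH}\circ\tau(x)=\chi(G/T)\cdot x$ for $x\in CH^*(BT)^W$. Write $N=\chi(G/T)=|W|$. First I would record the two structural facts I need: (i) $CH^*(BT)\cong\bZ_{(p)}[t_1,\dots,t_\ell]$ is $p$-torsion free, hence so is the subring $CH^*(BT)^W$; and (ii) since $\rho^*_{CH}$ is defined so as to factor as $CH^*(BG)\twoheadrightarrow CH^*(BG)/Tor\hookrightarrow CH^*(BT)^W$, we have $\Ker(\rho^*_{CH})=Tor$.

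Next I construct $y$. Because $\rho^*_{CH}\tau(x)=Nx$, the element $Nx$ lies in $\Img(\rho^*_{CH})$ for every $x\in CH^*(BT)^W$. Writing $N=p^am$ with $p\nmid m$, the scalar $m$ is a unit in $\bZ_{(p)}$, so $p^ax=\rho^*_{CH}(m^{-1}\tau(x))\in\Img(\rho^*_{CH})$; if $a=0$ this contradicts $x\notin\Img(\rho^*_{CH})$, so $a\ge 1$ (incidentally recovering $p\mid|W|$). Hence there is a smallest integer $s$, with $1\le s\le a$, such that $p^sx=\rho^*_{CH}(y)$ for some $y\in CH^*(BG)$; fix one such $y$. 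To see $y\neq 0$ in $(CH^*(BG)/Tor)\otimes\bZ/p$, suppose $y=t+pz$ with $t\in Tor$ and $z\in CH^*(BG)$. Then $p^sx=\rho^*_{CH}(y)=\rho^*_{CH}(t)+p\,\rho^*_{CH}(z)=p\,\rho^*_{CH}(z)$ by (ii), and cancelling $p$ in the $p$-torsion free group $CH^*(BT)^W$ (fact (i)) gives $p^{s-1}x=\rho^*_{CH}(z)$ --- contradicting minimality of $s$ when $s\ge 2$, and contradicting $x\notin\Img(\rho^*_{CH})$ when $s=1$. Thus $y$ is a nonzero element of $(CH^*(BG)/Tor)\otimes\bZ/p$.

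For nilpotency, I compute, using $\rho^*_{CH}(m^{-1}\tau(x^N))=Nx^N=p^ax^N$ together with $sN\ge N\ge p^a\ge a+1$ (so that $sN-a-1\ge 0$):
\[
\rho^*_{CH}(y^N)=(p^sx)^N=p^{sN}x^N=p^{\,sN-a-1}\cdot p\cdot(p^a x^N)=\rho^*_{CH}\bigl(p\cdot p^{\,sN-a-1}m^{-1}\tau(x^N)\bigr).
\]
Therefore $y^N-p\cdot p^{\,sN-a-1}m^{-1}\tau(x^N)\in\Ker(\rho^*_{CH})=Tor$, i.e. $y^N\in Tor+p\,CH^*(BG)$, which is exactly the statement that $y^N=0$ in $(CH^*(BG)/Tor)\otimes\bZ/p$. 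Hence $y$ is a nonzero nilpotent element there, as claimed. The argument is essentially bookkeeping with powers of $p$; the only genuinely external facts invoked are the existence and the standard identities of Totaro's transfer for Chow rings and the identification $\Ker(\rho^*_{CH})=Tor$, both of which are available in the present setting, so I do not anticipate a real obstacle beyond being careful that the transfer relations are used $p$-locally (integrally on $\bZ_{(p)}$-coefficients) rather than merely rationally.
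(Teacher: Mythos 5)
Your proof is correct and follows essentially the same route as the paper: the corollary is obtained there from the proof of Lemma 3.1, i.e.\ Feshbach's transfer argument (choose the minimal $s$ with $p^sx=\rho^*_{CH}(y)$, then use $\rho^*_{CH}(y^N)=p^{sN}x^N\in p\,\Img(\rho^*_{CH})$ and $\Ker(\rho^*_{CH})=Tor$) transplanted to Chow rings via Totaro's Becker--Gottlieb transfer. You merely make the power-of-$p$ bookkeeping and the nonvanishing of $y$ modulo $Tor+p\,CH^*(BG)$ explicit, which is consistent with the paper's intended argument.
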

  
 Voevodsky [Vo1,2] defined the Milnor operation $Q_i$
on the mod $p$ motivic cohomology (over a perfect field $k$ of any $ch(k)$)
\[ Q_i: H^{*,*'}(X;\bZ/p)\to H^{*+2(p^n-1),*'+p^n-1}(X;\bZ/p)\]
which is  compatible with the usual topological $Q_i$ by
the realization map $t_{\bC}:H^{*,*'}(X;\bZ/p)\to H^*(X(\bC);\bZ/p)$ when $ch(k)=0$. 
In particular, note for smooth $X$,
\[ Q_i|CH^*(X)/p=Q_i|H^{2*,*}(X;\bZ/p)=0.\]
(See $\S 2$ in [Pi-Ya] for details.)
We will prove the following theorem without using Feshbach
theorem (Lemma 3.1).
\begin{thm}  Let $ G$ be simply connected and $H^*(G)$ has $p$-torsion.  Then the restriction map
\[ \rho^*_{CH}: CH^2(BG)\to CH^2(BT)^W\]
is not surjective.
\end{thm}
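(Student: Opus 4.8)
The plan is to pin down $CH^2(BG)$ and $CH^2(BT)^W$ very explicitly and show the cokernel is forced to be nonzero by a Milnor-operation argument, exactly in the spirit of Lemma~3.4 but run in low degree. First I would note that since $G$ is simply connected, $H^2(BG)=0$ and $H^4(BG;\bZ_{(p)})\cong\bZ_{(p)}$, generated by a class $x_4$; correspondingly the characteristic class ring gives $CH^2(BG)\cong\bZ_{(p)}$ generated by the second Chern class $c_2$ of (a faithful representation of) $G$, and $CH^2(BT)^W\cong H^4(BT)^W\cong\bZ_{(p)}$, generated by $x_4$ (the Weyl-invariant degree-$2$ polynomial). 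The cycle map $cl:CH^2(BG)\to H^4(BG)$ sends $c_2$ to an integral multiple of $x_4$, and the content of the theorem is that this multiple is divisible by $p$: one has $c_2 = p\,x_4$ in $H^4(BG)$ (up to a unit), so $\rho^*_{CH}(c_2)=p\,x_4$ and $x_4$ itself is not in the image.

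So the real task is to establish $c_2 \equiv 0 \pmod p$ in $H^4(BG;\bZ_{(p)})$, equivalently that $c_2$ dies in $H^4(BG;\bZ/p)$. Here is where I would bring in the $p$-torsion hypothesis and the Milnor operations. Because $H^*(G)$ has $p$-torsion with $G$ simply connected, there is a class $v\in H^3(G;\bZ/p)$ detected by $Q_0$ (the mod-$p$ Bockstein) in the sense that the transgression in the fibration $G\to EG\to BG$ carries the degree-$4$ generator appropriately; concretely, in $H^*(BG;\bZ/p)$ the generator $\bar x_4$ satisfies $Q_1 Q_0$-type relations and, crucially, $\bar x_4 = Q_0(\text{something})$ is \emph{not} true — rather the relevant fact is that the mod-$p$ reduction of the integral generator, namely the image of $c_2$, is characterized by lying in the image of $Q_1$ or being hit by a Bockstein. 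The clean way: I would invoke that for simply connected $G$ with $p$-torsion, $H^3(BG;\bZ/p)\ne 0$ and the Bockstein $\beta$ sends a degree-$3$ class $u$ to the mod-$p$ reduction of a generator of $H^4$; since $\beta = Q_0$ and the cycle map image $CH^2(BG)/p \hookrightarrow H^{4}(BG;\bZ/p)$ lands in $\Ker(Q_0)$ by the displayed vanishing $Q_i|CH^*(X)/p=0$, the class $cl(c_2)\bmod p$ must be a $Q_0$-cycle. If the generator $\bar x_4$ is \emph{not} a $Q_0$-cycle, then $cl(c_2)\bmod p$ can only be $0$, i.e.\ $c_2=p\cdot(\text{generator})$, and we are done.

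Thus the key steps are: (i) identify $CH^2(BG)\cong\bZ_{(p)}\langle c_2\rangle$ and $CH^2(BT)^W\cong\bZ_{(p)}\langle x_4\rangle$; (ii) show $cl(c_2)\bmod p$ is a $Q_0$-cycle in $H^4(BG;\bZ/p)$ using $Q_0|CH^*/p=0$; (iii) show that, because $H^*(G)$ has $p$-torsion and $G$ is simply connected, the generator of $H^4(BG;\bZ/p)$ is \emph{not} a $Q_0$-cycle (equivalently, is hit by $\beta$ from $H^3(BG;\bZ/p)$); conclude $cl(c_2)\equiv 0$, hence $\rho^*_{CH}(c_2)=p\,x_4$ and surjectivity fails. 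The main obstacle is step (iii): one must argue in general — not case by case — that $p$-torsion in $H^*(G)$ for simply connected $G$ forces the degree-$4$ generator of $H^*(BG;\bZ/p)$ to carry a nontrivial Bockstein. I would handle this via the Serre spectral sequence of $G\to EG\to BG$ together with the classical structure theory (Borel): the lowest-degree $p$-torsion in a simply connected $G$ occurs in $H^3(G)$, whence $H^3(G;\bZ/p)$ contains a class on which $Q_0$ is nonzero, and transgression identifies its Bockstein with the mod-$p$ generator of $H^4(BG;\bZ/p)$; the only subtlety is confirming the transgression is injective in this range, which follows because $H^{\le 3}(BG)$ is concentrated in degree $0$ and degree $4$ for simply connected $G$.
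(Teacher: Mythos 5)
Your overall strategy -- identify $CH^2(BG)$ and $CH^2(BT)^W$, and use the vanishing of motivic Milnor operations on $CH^*/p$ to exclude the degree-$4$ integral generator from the image of the cycle map -- is the right idea and matches the spirit of the paper's argument. However, your step (iii) is wrong, and in a way that cannot be repaired within your chosen framework, because you have the wrong Milnor operation.

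First, the generator $\bar x_4\in H^4(BG;\bZ/p)$ \emph{is} a $Q_0$-cycle. Since $G$ is simply connected, $BG$ is $3$-connected, so $H^3(BG;\bZ/p)=0$ and $H^4(BG;\bZ_{(p)})\cong\bZ_{(p)}$; the class $\bar x_4$ is the mod-$p$ reduction of an integral torsion-free class, and the Bockstein $Q_0=\beta$ vanishes on any reduction of an integral class. Thus $Q_0\bar x_4=0$, and the constraint ``$cl(c_2)\bmod p$ lies in $\Ker Q_0$'' is no constraint at all -- it does not distinguish $\bar x_4$ from $0$. Second, your parenthetical ``(equivalently, is hit by $\beta$ from $H^3$)'' conflates \emph{not being a $Q_0$-cycle} ($Q_0\bar x_4\neq0$) with \emph{being a $Q_0$-boundary} ($\bar x_4\in\Img Q_0$); these are not equivalent, and since $Q_0^2=0$ a boundary is automatically a cycle. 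And as noted, $H^3(BG;\bZ/p)=0$ for simply connected $G$, so there is nothing for $\beta$ to hit $\bar x_4$ with; your appeal to ``lowest-degree $p$-torsion in $H^3(G)$'' and the transgression in $G\to EG\to BG$ cannot produce a nonzero class in $H^3(BG;\bZ/p)$.

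The operation you actually need is $Q_1$, not $Q_0$. The paper's proof first uses the structural fact that if $G$ is simply connected with $p$-torsion in $H^*(G)$, then $G$ contains a subgroup isomorphic to $G_2$ (for $p=2$), $F_4$ (for $p=3$), or $E_8$ (for $p=5$), with $H^4(BG)\twoheadrightarrow H^4(BG_2)\cong\bZ_{(2)}$ (and similarly for the other primes). For these groups one knows by direct computation that $Q_1 x_4\neq 0$ in $H^*(BG_2;\bZ/2)$ (for $G_2$, $Q_1 w_4 = w_7$), and this, combined with $Q_1|_{CH^*/p}=0$ from Voevodsky's motivic Milnor operations, is what forces $x_4\notin\Img(cl)$; then $px_4=c_2\in\Img(cl)$ and $\rho^*_H(x_4)\neq 0$ give that $\rho^*_H(x_4)\notin\Img(\rho^*_{CH})$. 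Your steps (i) and (ii) are sound, but step (iii) must be replaced by a $Q_1$-argument together with the reduction to $G_2/F_4/E_8$; as written it rests on a false claim.
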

\begin{proof}
(See  $\S 2,3$ in [Pi-Ya].)
At first, we note that $H^*(BT)^W\cong CH^*(BT)^W$
since $H^*(BT  )\cong CH^*(BT)$.
If $H^*(G)$ has $p$-torsion, then $G$ has a subgroup isomorphic to $G_2$ (resp. $F_4$, $E_8$) for $p=2$
(resp. $p=3,5$).  
(For details, see [Ya2] or $\S 3$ in [Pi-Ya].)
We prove the theorem for $p=2$ but the other cases are proved similarly.  

 It is known that the inclusion $G_2\subset G$ induces
 a surjection $H^4(BG)\to  H^4(BG_2)\cong
\bZ_{(2)}$ and let us write by $x_4$ its generator.  
Then it is also known $Q_1x_4\not =0$ in $H^*(BG_2;\bZ/2)$
where $Q_1$ is the Milnor operation. 
Therefore $x_4\in H^4(BG_2)$ is not in the image of the cycle map
\[  cl: CH^2(BG_{2})\to H^4(BG_2).\]

On the other hand,  the element  $2x_4$ is in $Im(cl)$ because
it is represented by the second Chern class $c_2$.
Since $\rho^*\otimes \bQ$ is an isomorphism, 
$\rho_H^*(x_4)\not =0$.  But $\rho_H^*(x_4)$ is not
in the image $\rho^*_{CH}$.
\end{proof}

{\bf Remark.} The condition of simply connected is necessary.
By Vistoli ([Vi], [Ka-Mi]), it is known that $\rho^*_{CH}$ is surjective
for $G=PGL(p)$.

{\bf Remark.}
The above theorem is also proved by seeing
that $x_4$ is not generated by Chern classes, since
$CH^2(X)$ is always generated by Chern classes [To2].

Recall that for a smooth projective complex variety $X$,
the  integral Hodge conjecture is that the cycle map
\[ cl_{/Tor}:CH^*(X)\to H^{2*}(X)/Tor\cap H^{*,*}(X)\]
is surjective where $H^{*,*}(X)\subset H^{2*}(X;\bC)$ is the
submodule generated by $(*,*)$-forms.
Since $px_4=c_2$ in the proof of the above theorem
and $c_2\in H^{*,*}(X)$, we see $x_4\in H^{*,*}(X)$.

We know [To1], [Pi-Ya] that $B\bG_m\times BG$ can be approximated by smooth projective varieties.  Hence 
counterexamples for the integral Hodge conjecture
with $X=B\bG_m\times BG$ give the  examples
such that  $\rho_{CH}^*$ is not surjective.
\begin{lemma} Let $1\otimes y\not \in Im(cl_{/Tor})
\subset H^*(B\bG_m\times BG)$
be a counterexample of the integral Hodge conjecture.
Then it gives an example such that $\rho^*_{CH}$
is not surjective, namely, $\rho_H^*(y)\not \in Im(\rho_{CH}^*)$.
\end{lemma}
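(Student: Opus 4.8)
The plan is to reduce the statement about the integral Hodge conjecture on $X = B\bG_m \times BG$ to the non-surjectivity of $\rho^*_{CH}$ by tracking a single class through the product structure. First I would fix a counterexample $1 \otimes y \in H^{2j}(B\bG_m \times BG)$ to the integral Hodge conjecture, so $1 \otimes y$ lies in the Hodge classes modulo torsion but is not in $\Img(cl_{/Tor})$. Using the K\"unneth decomposition $CH^*(B\bG_m \times BG) \cong CH^*(B\bG_m) \otimes CH^*(BG)$ (valid since $CH^*(B\bG_m) \cong \bZ_{(p)}[t]$ is a polynomial ring, so there are no Tor terms) together with the analogous decomposition in cohomology, I would argue that the failure of $1 \otimes y$ to be algebraic forces $y$ itself not to be in the image of $cl: CH^*(BG) \to H^*(BG)$ modulo torsion; more precisely, the component of $1 \otimes y$ in the factor $1 \otimes H^*(BG)/Tor$ cannot be hit by $cl$ on $1 \otimes CH^*(BG)$, and the other K\"unneth summands $t^i \otimes H^*(BG)$ are irrelevant to this component.

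Next I would invoke the commutative square relating the cycle map and the restriction maps to $BT$: since $H^*(BT) \cong CH^*(BT)$ (the cycle map is an isomorphism for tori, already used in the proof of Theorem 3.3), we have $\rho^*_H = \rho^*_{CH}$ after this identification, and $\rho^*_H \circ cl = \rho^*_{CH}$ on the image of $CH^*(BG)$. Therefore $\rho^*_H(y) \in \Img(\rho^*_{CH})$ would mean $\rho^*_H(y) = \rho^*_{CH}(z)$ for some $z \in CH^*(BG)$, i.e. $\rho^*_H(y) = \rho^*_H(cl(z))$; since $\rho^*_H \otimes \bQ$ is an isomorphism (Becker-Gottlieb transfer: $\tau \rho^*_H = \chi(G/T)$ with $\chi(G/T) = N \neq 0$, and $H^*(BG)/Tor$ injects into $H^*(BT)^W$), $\rho^*_H$ is injective on $H^*(BG)/Tor$, hence $y \equiv cl(z)$ modulo torsion, contradicting the previous paragraph. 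This yields $\rho^*_H(y) \notin \Img(\rho^*_{CH})$, which is exactly the assertion that $\rho^*_{CH}$ is not surjective onto $CH^*(BT)^W$.

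The main obstacle I anticipate is the bookkeeping in the first step: one must be careful that the "modulo torsion" quotients behave well under the K\"unneth isomorphism, and that a Hodge class on the product projects to a Hodge class in the relevant summand of $H^*(BG)$ (this uses that the Hodge decomposition is compatible with the K\"unneth formula and that $H^2(B\bG_m)$ is of Hodge type $(1,1)$, so pairing against $1 \in H^0(B\bG_m)$ preserves the Hodge condition). Once the K\"unneth and Hodge-type compatibilities are set up, the rest is the formal transfer argument already present in the proof of Theorem 3.3, so I do not expect further difficulties. I would also remark that this lemma is essentially the contrapositive packaging of Theorem 3.3 together with the observation, recorded after that theorem, that $px_4 = c_2$ exhibits $x_4$ as a Hodge class not in $\Img(cl_{/Tor})$ on $B\bG_m \times BG$; so in fact the explicit $y = x_4$ from the $G_2 \subset G$ argument already realizes the lemma concretely.
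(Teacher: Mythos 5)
Your proof is correct and follows essentially the same route as the paper: the key identity $\rho^*_{CH} = \rho^*_{H/Tor}\circ cl_{/Tor}$ combined with injectivity of $\rho^*_{H/Tor}$ (via the transfer) immediately converts $y \notin \Img(cl_{/Tor})$ into $\rho^*_H(y)\notin \Img(\rho^*_{CH})$. The only difference is that you spell out the K\"unneth reduction from $1\otimes y$ on $B\bG_m\times BG$ to $y$ on $BG$, which the paper leaves implicit; this is a helpful clarification rather than a different argument.
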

\begin{proof}
First note that $\rho_{H/Tor }^*:H^*(BG)/Tor
\to H^*(BT)^W$ is injective.
Since $CH^*(BT)^W\cong H^*(BT)^W$, we note
$\rho_{CH}^*=\rho_{H/Tor}^*cl_{/Tor}.$
Therefore $y\not \in Im(cl_{/Tor})$ implies that
$\rho_{H}^*(y)\not \in Im(\rho^*_{H/Tor}cl_{/Tor})
=Im(\rho_{CH}^*)$. 
\end{proof}

For each prime $p$, there are counterexamples $X=B\bG_m\times BG$ for the integral Hodge conjecture,
while they are not simply connected.
Indeed,  Kameko, 
Antineau and  Tripaphy ([Ka1,2], [An], [Tr]) show this for  $G=(SL_p\times SL_p)/\bZ/p$ and 
$SU(p^2)/\bZ/p$.  Hence they give the examples such that $\rho^*_{CH}$ are not surjective for non simply connected
and all  $p$ cases.
They proved these facts by using
Chern classes.

We also note its converse.
Recall [Pi-Ya] that the integral Tate conjecture over 
a finite field $k$ is the $ch(k)>0$ version of the integral Hodge conjecture.

\begin{lemma}
Let $x\in H^*(BT)^W$ such that $x\not \in \rho_{CH}^*$
but $x=\rho_{H}^*(y)$.  Moreover let $p^sy$ be represented by
Chern class for $s\ge 1$.  Then $1\otimes y\in H^*(B\bG_m\times BG)$
 gives a counterexample of the integral Hodge conjecture. 
It also gives a counterexample of the integral Tate conjecture for a finite field $k$ of $ch(k)\not =p$
\end{lemma}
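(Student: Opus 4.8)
The strategy is to reverse the argument of the preceding lemma. Since $x\notin Im(\rho^*_{CH})$ we have $x\neq 0$, so $y\notin Tor$ (because $\rho^*_H$ kills $Tor$) and $1\otimes y=\pi^*y$ is a nonzero class in $H^*(B\bG_m\times BG)/Tor$, where $\pi: B\bG_m\times BG\to BG$ is the projection. By [To1], [Pi-Ya], for any fixed bound on degrees there is a smooth projective complex variety $X$ whose Chow ring, singular cohomology and Hodge structure agree, through that range, with those of $B\bG_m\times BG$; fix such an $X$ in the degree of $1\otimes y$ and let $\eta\in H^{2*}(X)$ be the corresponding class, which is nonzero in $H^{2*}(X)/Tor$ since $1\otimes y\notin Tor$. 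First I would check that $\eta$ is a Hodge class: by hypothesis $p^sy$ is represented by a Chern class, so $p^s(1\otimes y)=1\otimes(p^sy)$ is algebraic, whence $p^s\eta$ lies in $Im(cl:CH^*(X)\to H^{2*}(X))$ and in particular has type $(*,*)$ in $H^{2*}(X;\bC)$; since $H^{*,*}(X)$ is a $\bC$-subspace and $\eta$ maps into the rational subspace $H^{2*}(X;\bQ)$, dividing by $p^s$ gives $\eta\in H^{*,*}(X)$.

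The crux is to show $\eta\notin Im(cl_{/Tor})$. Suppose $\eta=cl_{/Tor}(z)$. Using that $CH^*(X)$ and $H^*(X)$ agree with $CH^*(B\bG_m\times BG)$ and $H^*(B\bG_m\times BG)$ in this degree, we obtain $z\in CH^*(B\bG_m\times BG)$ with $cl(z)-1\otimes y\in Tor$. Let $\iota: BG\to B\bG_m\times BG$ be the section of $\pi$ determined by a rational point of $B\bG_m$, so that $\iota^*\pi^*=\mathrm{id}$ and $\iota^*$ preserves torsion. Applying $\iota^*$ and using naturality of the cycle map gives $cl(\iota^*z)-y\in Tor$, i.e. $cl_{/Tor}(\iota^*z)$ is the class $\overline{y}$ of $y$ in $H^*(BG)/Tor$. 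Now $\rho^*_{CH}: CH^*(BG)\to CH^*(BT)\cong H^*(BT)$ is the restriction map, and since $CH^*(BT)$ is torsion free it factors as $\rho^*_{CH}=\rho^*_{H/Tor}\circ cl_{/Tor}$, exactly as in the proof of the preceding lemma; hence $\rho^*_{CH}(\iota^*z)=\rho^*_{H/Tor}(\overline{y})=\rho^*_H(y)=x$, contradicting $x\notin Im(\rho^*_{CH})$. Therefore $\eta$ is a non-algebraic Hodge class on the smooth projective variety $X$, i.e. a counterexample to the integral Hodge conjecture.

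The integral Tate statement follows from the same computation over an algebraic closure $\bar k$ of a finite field $k$ with $ch(k)\neq p$, using $p$-adic \'etale cohomology in place of singular cohomology and the cycle class map $CH^*(X)\otimes\bZ_p\to H^{2*}_{et}(\bar X;\bZ_p(*))$: the Chow rings of $BG$ and $BT$, the map $\rho^*_{CH}$, and its \'etale realization behave as over $\bC$ in the range relevant here (this is the setting of [Pi-Ya]), Chern classes are algebraic and Galois invariant, and the Tate classes form a saturated subgroup after inverting $p$, so $\eta$ is again a Tate class outside the image of $cl$. The step I expect to be the genuine obstacle is not this algebra but the geometric input I have black-boxed, namely producing smooth \emph{projective} models of $B\bG_m\times BG$ with the asserted stabilization of Chow groups, (singular or \'etale) cohomology, and the Hodge/Galois structure in a fixed range of degrees; for this the write-up should lean on the constructions of [To1] and [Pi-Ya].
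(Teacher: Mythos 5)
Your proof is correct and follows essentially the same route as the paper's: divide the algebraic class $p^s y$ by $p^s$ inside the Hodge classes to see $1\otimes y$ is Hodge, then use the factorization $\rho^*_{CH}=\rho^*_{H/Tor}\circ cl_{/Tor}$ (already established in the proof of the preceding lemma) to derive a contradiction from $x\notin Im(\rho^*_{CH})$, and cite [Pi-Ya] for the Tate side and for the smooth projective approximations of $B\bG_m\times BG$. Your write-up is more explicit than the paper's at two points the paper leaves implicit — the section $\iota$ that reduces $B\bG_m\times BG$ back to $BG$, and the verification $y\notin Tor$ — but the underlying argument is the same.
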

\begin{proof}
Since $p^sy$ is represented by a Chern class, we see
$p^sy\in Im(cl)$.  Hence it is contained in $H^{*,*}(B\bG_m\times BG)$.  Hence so is $y$.  
Since $x\not \in \rho_{CH}^*$, we see $y\not \in cl_{/Tor}$.
For arguments for the integral Tate conjecture see [Pi-Ya].
\end{proof}

\section{$Spin(n)$ for $p=2$}

In this section, we study Chow rings for the cases $G=Spin(n)$,
$p=2$.
Recall that the $mod(2)$ cohomology is given by Quillen [Qu1]
\[ H^*(BSpin(n);\bZ/2)\cong \bZ/2[w_2,...,w_n]/J\otimes
\bZ/2[e]  \]
where $e=w_{2^h}(\Delta)$ and $J=(w_2,Q_0w_2,...,Q_{h-2}w_2)$.  Here $w_i$ is the Stiefel-Whitney class for the natural covering
$Spin(n)\to SO(n)$. The number $2^h$ is the Radon-Hurewitz number, dimension of the spin representation $\Delta$
(which is the representation $\Delta|C\not =0$ for the center
$C\cong \bZ/2\subset Spin(n)$).  The element $e$ is the Stiefel-Whitney class $w_{2^h}$ of the spin representation $\Delta$.

Hereafter this section we always assume
$G=Spin(n)$ and $p=2$.

By Kono [Ko], it is known that $H^*(BG;\bZ)$ has no
higher $2$-torsion,  that is
\[ H(H^*(BG;\bZ/2);Q_0)\cong (H^*(BG)/Tor)\otimes \bZ/2\]
where $H(A;Q_0)$ is the homology of $A$ with the differential
$d=Q_0$. 

For ease of arguments, let $n$ be odd i.e., $n=2k+1$. 
Let $T'$ be a maximal Torus of $SO(n)$ and $W'=W_{SO(n)}(T')$
its Weyl group.
Then $W'\cong S_k^{\pm}$ is generated by permutations and change of signs so that $|S_k^{\pm}|=2^kk!$.
Hence 
we have
\[H^*(BT')^{W'}\cong \bZ_{(2)}[p_1,...,p_k]\subset H^*(BT')\cong \bZ_{(2)}[t_1,...,t_k],\ |t_i|=2 \]
where the Pontriyagin class $p_i$ is defined by
$\Pi_i(1+ t_i^2)=\sum_ip_i$. 

For the projection $\pi:Spin(n)\to SO(n)$, the maximal torus
of $Spin(n)$ is given $\pi^{-1}(T')$ and $W=W_{Spin(n)}(T)\cong
W'$.   To seek the invariant $H^*(BT)^W$ is not so easy,
since the action $W\cong S_k^{\pm} $ is not given by
permutations and change of signs. 
Benson and Wood decided the $H^*(BT')^{W'}$-algebra
structure of  $H^*(BT)^{W}$ (Theorem 7.1 in [Be-Wo])
and proved 
\begin{thm} (Benson-Wood)
Let $G=Spin(n)$ and $p=2$.
Then $\rho^*_{H}$ is not surjective
if and only if $n\ge 11$ and  $n=3,4,5\ mod(8)$ (i.e., the quaternion case).
\end{thm}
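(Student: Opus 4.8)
The plan is to apply Feshbach's criterion (Theorem 2.1), so it suffices to decide for which $n$ the ring $(H^*(BSpin(n))/Tor)\otimes \bZ/2$ has a nonzero nilpotent element. By Kono's theorem recalled in Section 4, this ring is the $Q_0$-Margolis homology $H(H^*(BSpin(n);\bZ/2);Q_0)$, so I would compute it from Quillen's presentation $H^*(BSpin(n);\bZ/2)\cong \bZ/2[w_2,\dots,w_n]/J\otimes \bZ/2[e]$. Since $Q_0e=0$, the factor $\bZ/2[e]$ is a subcomplex with zero differential, so $H(H^*(BSpin(n);\bZ/2);Q_0)\cong H(R;Q_0)\otimes\bZ/2[e]$ with $R=\bZ/2[w_2,\dots,w_n]/J$; and since the nilradical of a polynomial ring $A[e]$ is $(\mathrm{nil}\,A)[e]$, it is enough to decide when $H(R;Q_0)$ is reduced. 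I would treat $n$ odd as in the excerpt and handle $n$ even by the analogous computation, the extra congruence class being $n\equiv 4$ on top of the odd classes $n\equiv 3,5$.

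To analyze $H(R;Q_0)$: on $\bZ/2[w_2,\dots,w_n]$ one has $Q_0w_{2i}=w_{2i+1}$ and $Q_0w_{2i+1}=0$ (with $w_1=w_{n+1}=0$), and $J$ is $Q_0$-stable because it is generated by $w_2,Q_0w_2,\dots,Q_{h-2}w_2$ and $Q_0$ commutes with the $Q_i$. Modulo decomposables $Q_iw_2=w_{2^{i+1}+1}$, so the generators of $J$ of internal degree $\le n$ simply eliminate the variables $w_2,w_3,w_5,w_9,\dots$, while those of degree $>n$ become polynomial relations --- necessarily of odd degree --- among the surviving $w_j$. Ignoring these relations, $Q_0$ on the surviving generators either pairs $w_{2i}\leftrightarrow w_{2i+1}$ or leaves a solitary $w_{2^j}$ a cycle, so the Margolis homology is a polynomial algebra on the solitary classes and the squares of the paired ones, concentrated in degrees divisible by $4$ --- in particular reduced. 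A short exact sequence in Margolis homology associated to each polynomial relation $r$ then enters: as $r$ is a $Q_0$-cycle of odd degree it is a boundary $r=Q_0s$ in the polynomial ring, and passing to $R$ makes $s$ a new $Q_0$-cycle. Whether the classes $[s]$ so produced are nilpotent is exactly the remaining question.

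The dichotomy is governed by the Radon--Hurwitz data: the integer $h=h(n)$ (with $2^{h}=\dim\Delta$) and the degrees $2^{i+1}+1$ of the generators of $J$ are periodic modulo $8$, which is the source of the congruence in the statement. For the $n$ with $BSpin(n)$ having $2$-torsion ($n\ge 7$) I would show: if $\Delta$ is of real or complex type, i.e. $n\not\equiv 3,4,5\ (8)$, the odd-degree relations are either absent or interact with the $[s]$-classes so that $H(R;Q_0)$ remains a polynomial, hence reduced, algebra --- this reproduces Feshbach's and Wood's checks for $n\le 10$ and $14\le n\le 18$; if $\Delta$ is quaternionic, i.e. $n\equiv 3,4,5\ (8)$ (which forces $n\ge 11$ once $2$-torsion is required), the minimal odd-degree relation $r=Q_{h-2}w_2$ does occur and its companion $s$ is a nonzero nilpotent in $H(R;Q_0)$; equivalently, the factor of $2$ built into the integral characteristic classes of a quaternionic bundle yields a class $\mu\in H^*(BT)^W$ with $2\mu\in\Img\,\rho^*_H$ but $\mu\notin\Img\,\rho^*_H$, and then $\rho^*_H$ is not surjective by the argument proving Theorem 2.1.

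I expect the main obstacle to be precisely this middle bookkeeping: identifying which odd-degree relations appear and verifying that in the non-quaternionic range they do not create a nilpotent. Rather than pushing the Margolis-homology computation through bare-handed, I would lean on the Benson--Wood determination of $H^*(BT)^W$ as an $H^*(BT')^{W'}$-algebra (their Theorem 7.1) together with the known description of $\Img\,\rho^*_H=H^*(BSpin(n))/Tor$, and carry out the comparison --- surjective versus not --- uniformly across the eight residues of $n$ modulo $8$.
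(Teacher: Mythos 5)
The paper does not prove this theorem; it simply attributes it to Benson--Wood [Be-Wo], remarking that they ``decided the $H^*(BT')^{W'}$-algebra structure of $H^*(BT)^W$ (Theorem 7.1 in [Be-Wo]) and proved'' the statement. So there is no in-paper argument to check your proposal against, only a citation. Your outline is a fair reconstruction of the Feshbach--Wood--Benson--Wood strategy: apply Feshbach's criterion (Theorem 2.1) to reduce the question to whether $(H^*(BSpin(n))/Tor)\otimes\bZ/2$ is reduced, invoke Kono to identify this with the $Q_0$-Margolis homology of $H^*(BSpin(n);\bZ/2)$, and split off the polynomial factor $\bZ/2[e]$. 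You are also candid that you do not carry the decisive computation through, and that at the crux you would fall back on Benson--Wood's Theorem 7.1 --- which is exactly what the paper does by citing [Be-Wo]. In that sense your proposal is consistent with the paper's treatment but is an outline rather than a self-contained proof.

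Two points deserve sharpening if this were to be made rigorous. First, the reduction $H(H^*;Q_0)\cong H(R;Q_0)\otimes\bZ/2[e]$ requires $Q_0e=0$ in \emph{all} congruence classes of $n$, not just the real case $n\equiv 0,\pm1\ (8)$ treated in Lemma 4.3; this does hold (it follows from $e|A=\prod_{x}(z+x)$ and the vanishing of $\sum_{x\in V}x$ for $\dim V\ge 2$, together with detection on elementary abelians), but it should be justified rather than assumed. Second, dismissing the even case as ``the analogous computation'' understates a real difference: $W_{SO(2k)}(T')$ is the index-two subgroup $S_k^+\subset S_k^{\pm}$ and the invariant ring acquires the Euler class $t_1\cdots t_k$, so both the invariant theory and the interaction with the spin class $e$ are genuinely different from the odd case, and $n\equiv 4\ (8)$ is not simply ``$n\equiv 3,5$ plus one.'' These are exactly the sort of details Benson and Wood had to control, which is why the paper (and, ultimately, your proposal) defers to their computation.
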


Hereafter to study the Chow ring version, we assume $Spin(n)$
is in real case [Qu1], that is $n=8\ell-1, 8\ell, 8\ell+1$
(hence $\rho^*$ is surjective and $h=4\ell-1,4\ell-1,4\ell$ respectively). 

In this case,
it is known [Qu1] that each maximal elementary abelian $2$-group $A$ has 
$rank_2A=h+1$ and
\[ e|A=\Pi _{x\in H^1(B\bar A;\bZ/2)}(z+x)
\] where we identify 
$A\cong C\oplus \bar A$ and
$H^1(B\bar A;\bZ/2)\cong \bZ/2\{x_1,...,x_h\}$ is the $\bZ/2$ vector space generated by
$x_1,...,x_h$, and 
\[ H^*(BC;\bZ/2)\cong \bZ/2[z], \quad H^*(B\bar A;\bZ/2)\cong \bZ/2[x_1,...,x_h].\] 
The Dickson algebra is written as a polynomial algebra
\[ \bZ/2[x_1,....,x_h]^{GL_h(\bZ/2)}
\cong \bZ/2[d_0,....,d_{h-1}].\]
where $d_i$ is defined as
\[ e|A=z^{2^h}+d_{h-1}z^{2^{h-1}}+...+d_0z.\]
Hence we can [Qu1] also identify $d_i=w_{2^h-2^i}(\Delta)
\in H^*(BSpin(n);\bZ/2)$.
\begin{lemma}  (Lemma 2.1 in [Sc-Ya])
Milnor operations act on $d_i$ by
\[Q_{h-1}d_i=d_0d_i,\quad Q_{j-1}d_j=d_0,\ for\ 1 \le j,\]
\[Q_id_j=0\quad  for\ i<n-1\ and \ i\not =j-1.\]
\end{lemma}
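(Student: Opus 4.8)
The plan is to derive everything from the additive (``$2$-polynomial'') presentation of the top Stiefel--Whitney class of the spin representation. Adjoin to $R:=H^*(B\bar A;\bZ/2)=\bZ/2[x_1,\dots,x_h]$ the polynomial generator $z$ of $H^*(BC;\bZ/2)$ and set
\[ f(z)\;=\;e|A\;=\;\prod_{x\in V}(z+x)\;=\;\sum_{j=0}^{h}d_j\,z^{2^j}\qquad (V=H^1(B\bar A;\bZ/2),\ d_h=1).\]
Two facts do the work. First, each $Q_i$ is a derivation on $R[z]$ with $Q_i(\ell)=\ell^{2^{i+1}}$ on every $\bZ/2$-linear form $\ell$ in the $x$'s and $z$ (additivity of $Q_i$ plus the Frobenius, using $c^{2^{i+1}}=c$ for $c\in\bZ/2$); in particular $Q_i(z^{2^j})=0$ for $j\ge 1$ since $2^j\equiv 0 \pmod 2$. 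Second, in characteristic $2$ the formal derivative is $\partial f/\partial z=\sum_j 2^j d_j z^{2^j-1}=d_0$, while also $\partial f/\partial z=\sum_{y\in V}\prod_{x\neq y}(z+x)$ by Leibniz. Applying $Q_i$ to the product form, $Q_i(z+y)=z^{2^{i+1}}+y^{2^{i+1}}$ gives
\[ Q_i f(z)\;=\;z^{2^{i+1}}\!\!\sum_{y\in V}\prod_{x\neq y}(z+x)\;+\;g_i(z)\;=\;z^{2^{i+1}}d_0\;+\;g_i(z),\qquad g_i(z):=\sum_{y\in V}y^{2^{i+1}}\prod_{x\neq y}(z+x);\]
applying $Q_i$ to the additive form gives $Q_i f(z)=\sum_{j=0}^{h-1}(Q_i d_j)z^{2^j}+d_0 z^{2^{i+1}}$. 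The $z^{2^{i+1}}d_0$ terms cancel, leaving the identity $\sum_{j=0}^{h-1}(Q_i d_j)z^{2^j}=g_i(z)$ in $R[z]$.

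The main step is then to evaluate $g_i(z)$ by Lagrange interpolation. For $y'\in V$, the product $\prod_{x\neq y}(z+x)$ specialises at $z=y'$ to $0$ if $y'\neq y$ and to $\prod_{u\neq 0}u=d_0$ if $y'=y$ (re-index $u=y+x$); hence $g_i(y')=y'^{\,2^{i+1}}d_0$ for every $y'\in V$, so $g_i(z)$ and $z^{2^{i+1}}d_0$ agree at all $2^h$ points of $V$. Since $f(z)=\prod_{x\in V}(z+x)$ is monic of degree $2^h$ and the differences $y+y'$ of distinct elements of $V$ are nonzero in the domain $R$, any element of $R[z]$ vanishing on all of $V$ is divisible by $f(z)$; thus $g_i(z)-z^{2^{i+1}}d_0=q(z)f(z)$. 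For $i\le h-2$ one has $\deg_z g_i\le 2^h-1$ and $2^{i+1}\le 2^{h-1}$, so the left side has $z$-degree $<2^h$ and must vanish: $g_i(z)=z^{2^{i+1}}d_0$; comparing coefficients of $z^{2^j}$ in $\sum_j(Q_i d_j)z^{2^j}=d_0 z^{2^{i+1}}$ yields $Q_{j-1}d_j=d_0$ (the term $j=i+1$) and $Q_i d_j=0$ for $i<h-1$, $i\neq j-1$. For $i=h-1$, $z^{2^{i+1}}d_0=z^{2^h}d_0$ has degree $2^h$ with leading coefficient $d_0$, forcing $q(z)=d_0$; hence $g_{h-1}(z)=z^{2^h}d_0+d_0 f(z)=\sum_{j=0}^{h-1}d_0 d_j z^{2^j}$ after the two $z^{2^h}$ terms cancel, and so $Q_{h-1}d_j=d_0 d_j$ for all $j$.

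The only genuinely delicate points are bookkeeping: one must verify that $Q_i$ acts as the stated derivation on linear forms — this is exactly where the $\bZ/2$-coefficients, and hence the real/Dickson range $n\equiv -1,0,1\pmod 8$, are essential — and one must justify the divisibility-by-$f(z)$ step over the non-field ring $R$, for which it suffices that $f(z)$ is monic and the pairwise differences of the $2^h$ elements of $V$ are non-zero-divisors in the integral domain $R$, so that ``$p(v)=0\Rightarrow (z+v)\mid p(z)$'' and pairwise coprimality pass to $R[z]$. Everything else is a degree count and a comparison of coefficients. (I note the hypothesis displayed as ``$i<n-1$'' should read ``$i<h-1$'': with $n$ in place of $h$ the line $Q_{h-1}d_i=d_0d_i$ would contradict it.)
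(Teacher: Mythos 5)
The paper does not prove this lemma itself; it simply cites Lemma 2.1 of [Sc-Ya], so there is no in-paper argument to compare against. Judged on its own, your proof is correct. The key steps all check out: $Q_i$ is a derivation with $Q_i(\ell)=\ell^{2^{i+1}}$ on degree-one classes, so $Q_i(z^{2^j})=0$ for $j\ge 1$; differentiating the factorisation $f(z)=\prod_{x\in V}(z+x)$ gives $\sum_{y\in V}\prod_{x\ne y}(z+x)=d_0$; equating the two expressions for $Q_if(z)$ reduces the problem to the identity $\sum_{j=0}^{h-1}(Q_id_j)z^{2^j}=g_i(z)$; and evaluating $g_i$ at the $2^h$ points of $V$ (using $\prod_{u\ne 0}u=d_0$) shows $g_i(z)\equiv z^{2^{i+1}}d_0$ modulo the monic degree-$2^h$ polynomial $f(z)$. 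Your justification of the divisibility step — pass to $\mathrm{Frac}(R)$, where the $2^h$ roots are distinct because nonzero $\bZ/2$-linear forms are non-zero-divisors, then descend via monicity — is sound, and the degree count cleanly separates the case $i\le h-2$ (giving $Q_{j-1}d_j=d_0$ and $Q_id_j=0$ otherwise) from $i=h-1$ (where $q(z)=d_0$, giving $Q_{h-1}d_j=d_0d_j$). You are also right that the stated hypothesis ``$i<n-1$'' is a misprint for ``$i<h-1$'', and that the index range in $Q_{j-1}d_j=d_0$ should be $1\le j\le h-1$, since $d_h=1$.

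One small point worth making explicit: your calculation takes place in the Dickson algebra $\bZ/2[x_1,\dots,x_h]^{GL_h(\bZ/2)}\subset H^*(B\bar A;\bZ/2)$, i.e.\ it computes the restriction of $Q_id_j$ to $A$. Since the lemma is asserted for $d_j=w_{2^h-2^j}(\Delta)\in H^*(BSpin(n);\bZ/2)$, one should add a sentence invoking Quillen's detection result from [Qu1] for the real range $n\equiv -1,0,1\ (\mathrm{mod}\ 8)$ to lift the identity from $H^*(BA;\bZ/2)$ back to $H^*(BSpin(n);\bZ/2)$; this is routine but it is the reason the real range enters, more so than the linear-form formula for $Q_i$ you cite in that connection.
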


\begin{lemma} (Corollary 2.1 in [Sc-Ya])
We have
\[Q_{h-1}e=d_0e\quad  and\quad Q_ke=0\ \  for\ 0\le k\le h-2.\]
\end{lemma}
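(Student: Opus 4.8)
The plan is to reduce both identities to a computation on the maximal elementary abelian $2$-subgroups $A$ of $G$, where $e$, $z$, and the Dickson classes $d_i$ are completely explicit, and then to feed in the preceding lemma on the action of the $Q_i$ on the $d_j$.

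First one reduces to a single $A$. By Quillen's description of $H^*(BSpin(n);\bZ/2)$ in the real case (see [Qu1], [Sc-Ya]), the classes appearing in the statement are detected by restriction to the maximal elementary abelian $2$-subgroups, so it suffices to check that $Q_{h-1}(e|A)=d_0\,(e|A)$ and $Q_k(e|A)=0$ for $0\le k\le h-2$, for a fixed such $A$. Identify $H^*(BA;\bZ/2)\cong\bZ/2[z,x_1,\dots,x_h]$ as above, and recall the relation
\[ e|A=z^{2^h}+d_{h-1}z^{2^{h-1}}+\dots+d_1z^2+d_0z , \]
where now $d_i$ abbreviates $d_i|A\in\bZ/2[x_1,\dots,x_h]$.

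Next, one exploits that $Q_k$ is a derivation and that on $H^*(BC;\bZ/2)\cong\bZ/2[z]$ it acts by $Q_kz=z^{2^{k+1}}$, since $Q_k$ carries a one-dimensional class $y$ to $y^{2^{k+1}}$. Hence $Q_k(z^{2^i})=0$ for every $i\ge 1$ (the exponent being even), while $Q_k(z)=z^{2^{k+1}}$. Applying $Q_k$ to the displayed relation gives
\[ Q_k(e|A)=\sum_{i=1}^{h-1}\big(Q_kd_i\big)z^{2^i}+\big(Q_kd_0\big)z+d_0\,z^{2^{k+1}} . \]
Now substitute the formulas of the preceding lemma. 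For $k=h-1$ one has $Q_{h-1}d_i=d_0d_i$ for all $i\ge 0$, and the right-hand side collapses to $d_0\big(z^{2^h}+\sum_{i=1}^{h-1}d_iz^{2^i}+d_0z\big)=d_0\,(e|A)$. For $0\le k\le h-2$ one has $Q_kd_i=0$ unless $i=k+1$, together with $Q_kd_{k+1}=d_0$ and $Q_kd_0=0$; hence only the $i=k+1$ summand and the final term survive, and they cancel, so $Q_k(e|A)=d_0z^{2^{k+1}}+d_0z^{2^{k+1}}=0$. Combined with the reduction step, this gives $Q_{h-1}e=d_0e$ and $Q_ke=0$ for $0\le k\le h-2$.

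The one step that is not a one-line verification is the reduction to elementary abelian subgroups, where Quillen's computation of $H^*(BSpin(n);\bZ/2)$ in the real case enters; this is the place I would state carefully (citing [Qu1] and [Sc-Ya]). If one wishes to avoid it, one can instead work directly from the factorization $e|A=\prod_{x\in H^1(B\bar A;\bZ/2)}(z+x)$: since $Q_k(z+x)=z^{2^{k+1}}+x^{2^{k+1}}=(z+x)^{2^{k+1}}$ by Frobenius, the derivation property yields $Q_k(e|A)=\big(\sum_x(z+x)^{2^{k+1}-1}\big)(e|A)$, and the power sum $\sum_x(z+x)^{2^{k+1}-1}$ is $0$ when $k\le h-2$ and equals $d_0$ when $k=h-1$, by the standard relation (via the logarithmic derivative of $\prod_x(T+x)=\sum_{i=0}^{h}d_iT^{2^i}$) between power sums over a vector space of linear forms and the Dickson invariants. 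On either route the arithmetic is short; the only real care is in matching the index ranges against the hypotheses of the preceding lemma.
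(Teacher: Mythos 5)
Your proof is correct, and it is almost certainly the intended derivation: the paper itself does not prove this lemma in the text but cites it as ``Corollary 2.1 in [Sc-Ya],'' and the preceding lemma (your starting point) is cited there as ``Lemma 2.1''; deducing the corollary from the lemma as you do, via the expansion $e|A = z^{2^h}+\sum_{i=0}^{h-1}d_i z^{2^i}$, the derivation property of $Q_k$, $Q_k z = z^{2^{k+1}}$, and the vanishing $Q_k(z^{2^i})=0$ for $i\ge 1$, is exactly the natural computation and the bookkeeping (the only surviving terms being $i=k+1$ against $d_0 z^{2^{k+1}}$ for $k\le h-2$, and the factoring out of $d_0$ for $k=h-1$) is correct. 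The one step that deserves a precise citation is the reduction to elementary abelians: you need that $H^*(BSpin(n);\bZ/2)$ (in the real case) is detected on maximal elementary abelian $2$-subgroups, which is indeed a theorem of Quillen [Qu1], not merely the $F$-isomorphism statement from [Qu2], and since the $Q_k$ are stable operations they commute with the restriction, so the reduction is legitimate. Your alternative route via $Q_k(e|A)=\bigl(\sum_{x}(z+x)^{2^{k+1}-1}\bigr)(e|A)$ and the power-sum/Dickson identity is also valid and conceptually cleaner, but as written it pushes the content into the claim that $\sum_{x\in V}(z+x)^{2^{k+1}-1}$ equals $0$ for $k\le h-2$ and $d_0$ for $k=h-1$; that identity is true (and in fact follows from your first computation by cancelling the nonzerodivisor $e|A$), but if you want it as an independent argument you should prove it directly, e.g.\ by observing that this sum is a $GL(V)$-invariant constant in $T$ (replacing $T$ by $T+x'$ permutes the summands) and comparing with $f'(T)=d_0$ where $f(T)=\prod_{x\in V}(T+x)$.
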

\begin{thm}
Let $T\subset G=Spin(n)$ for $n=8\ell, 8\ell{\pm}1$.
There is an $e'\in CH^*(BT)^W$ such that
$e'\not \in Im(\rho_{CH}^*)$ and $\rho_H^*(e)=e'$ $mod(2)$.
\end{thm}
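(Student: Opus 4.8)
The plan is to push the argument of Theorem 3.3 one step higher, into cohomological degree $2^h$, with the Stiefel--Whitney class $e=w_{2^h}(\Delta)$ playing the role $x_4$ played there. First I would lift $e$ integrally. By Lemma 4.3 (the corollary from [Sc-Ya]) we have $Q_ke=0$ for $0\le k\le h-2$; in particular $Q_0e=0$, so, since $H^*(BG;\bZ)$ has no higher $2$-torsion by Kono [Ko], the Bockstein spectral sequence collapses at $E_2$ and $e$ is the reduction of a class $\tilde e\in H^{2^h}(BG)$. This $\tilde e$ is non-torsion: $e$ is a polynomial generator of $H^*(BG;\bZ/2)$ and lies in no image of $Q_0$, so it represents a (polynomial-generator) class in $H(H^*(BG;\bZ/2);Q_0)\cong(H^*(BG)/Tor)\otimes\bZ/2$. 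As $\rho^*_{H/Tor}$ is injective, $e':=\rho^*_H(\tilde e)$ is a nonzero element of $H^*(BT)^W\cong CH^*(BT)^W$, and its mod-$2$ reduction is $\rho^*_{H/2}(e)$; this is the assertion ``$\rho^*_H(e)=e'$ mod $2$''.

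It remains to show $e'\notin\Img(\rho^*_{CH})$. By the argument of Lemma 3.4, $\rho^*_{CH}=\rho^*_{H/Tor}\circ cl_{/Tor}$ with $\rho^*_{H/Tor}$ injective, so $e'\in\Img(\rho^*_{CH})$ would give $\tilde e\equiv cl(\beta)\pmod{Tor}$ for some $\beta\in CH^*(BG)$. Reducing modulo $2$, and using (again via Kono) that the reduction of $Tor$ is exactly $\Img(Q_0)$, this forces $e\in\bch^*(BG)+\Img(Q_0)$ in $H^*(BG;\bZ/2)$, where $\bch^*(BG)$ is the image of $CH^*(BG)/2$; equivalently, the class $[e]$ lies in the subring of $(H^*(BG)/Tor)\otimes\bZ/2\cong H(H^*(BG;\bZ/2);Q_0)$ generated by the reductions (modulo $Tor$) of Chern classes of representations. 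So it suffices to show that this subring does not contain $[e]$ in degree $2^h$. To that end I would combine: Kono's (equivalently Benson--Wood's) computation of the ring $(H^*(BSpin(n))/Tor)\otimes\bZ/2$, in which $[e]$ is a polynomial generator sitting in degree $2^h$; Totaro's theorem that $CH^*(BG)$, hence $\bch^*(BG)$, is generated by Chern classes; the fact that in the range $n=8\ell,\,8\ell\pm1$ every irreducible representation of $Spin(n)_{\bC}$ is self-conjugate, so its non-torsion Chern classes reduce to ordinary or symplectic Pontryagin classes; and restriction to the maximal elementary abelian $A\subset G$, where $e$ restricts to $\prod_{x\in H^1(B\bar A;\bZ/2)}(z+x)=\sum_i d_iz^{2^i}$. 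The point to extract is that the $z$-linear term $d_0z$ of $e|_A$ is produced by the Stiefel--Whitney classes of $\Delta$ but by no product of Chern classes of other representations, so that the generator $[e]$ is missing from the Chern-class subring in degree $2^h$. For $n=7$ ($h=3$) this is transparent and is done in $\S6$: there $(H^*(BSpin(7))/Tor)\otimes\bZ/2=\bZ/2[w_4,w_6^2,e]$, so degree $8$ is spanned by $w_4^2$ and $e$, while the Chern-class subring in degree $8$ is contained in $\bZ/2\cdot w_4^2$.

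The main obstacle is this last verification for general $n$. The cheap invariant does not suffice: $Q_{h-1}e=d_0e$ by Lemma 4.3, but $d_0e=Q_0(d_1e)$ --- using $Q_0d_1=d_0$ from Lemma 4.2 and $Q_0e=0$ --- already lies in $\Img(Q_0)$, so applying $Q_{h-1}$ to a hypothetical $e=\overline{cl(\beta)}+q$ with $q\in\Img(Q_0)$ gives no contradiction; and restricting to an elementary abelian subgroup and passing to $Q_0$-Margolis homology is equally useless, since that homology of $H^*(BA;\bZ/2)$ vanishes in positive degrees. What genuinely separates $[e]$ from the Chern-class subring must come from the fine structure of $(H^*(BSpin(n))/Tor)\otimes\bZ/2$ as a module over its Chern/Pontryagin subring, together with the Dickson identities of Lemmas 4.2--4.3 and the weight structure of the spin representation on $A$. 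Everything preceding --- constructing $\tilde e$, checking it is non-torsion, and reducing to ``$[e]$ is not in the Chern-class subring'' --- is formal.
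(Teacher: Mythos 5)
Your construction of $e'$ is exactly the paper's: use Kono's ``no higher $2$-torsion'' to identify $(H^*(BG)/Tor)\otimes\bZ/2$ with $Q_0$-Margolis homology, observe $Q_0e=0$ (Lemma 4.3) while $e|_C=z^{2^h}$ shows $e\notin\Img Q_0$, lift to $e''\in H^*(BG)/Tor$, and set $e'=\rho_H^*(e'')\neq 0$. This half is fine and is essentially verbatim the paper's argument.

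The issue is the second half. The paper's entire proof that $e'\notin\Img(\rho^*_{CH})$ is the one sentence ``$Q_{h-1}(e)\neq 0$, hence $e'\notin\Img(\rho^*_{CH})$ by the existence of $Q_i$ in motivic cohomology.'' This is precisely the ``cheap invariant'' you discuss and then (correctly) reject: because $\rho^*_{CH}=\rho^*_{H/Tor}\circ cl_{/Tor}$, the hypothesis $e'\in\Img(\rho^*_{CH})$ only forces $\overline{cl(\beta)}=e+Q_0(v)$ for some $v$ (the $Q_0(v)$ term comes both from the choice of integral lift of $e''$ and from the mod-$2$ reductions of the $2$-torsion in $H^{2^h}(BG)$, which span $\Img Q_0$ by Kono). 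Then $Q_{h-1}(\overline{cl(\beta)})=d_0e+Q_0Q_{h-1}(v)$, and since $d_0e=Q_0(d_1e)$ --- using $Q_0d_1=d_0$ from Lemma 4.2 and $Q_0e=0$ from Lemma 4.3 --- the would-be obstruction $d_0e$ already lies in $\Img Q_0$, so the vanishing of $Q_{h-1}$ on $CH^*/2$ gives no contradiction. In other words, your objection to the ``cheap invariant'' is an objection to the paper's own proof. The argument does go through whenever $H^{2^h}(BG;\bZ/2)$ meets $\Img Q_0$ trivially (so there is no correction term), which holds for $Spin(7)$ (degree $8$ is spanned by $w_4^2$ and $w_8=e$, both $Q_0$-closed but not $Q_0$-exact), and this is why your explicit $\S 6$ check for $n=7$ is conclusive. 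For larger $n$ in the range $8\ell,8\ell\pm1$ the degree-$2^h$ part of $\Img Q_0$ will not vanish, and neither the paper nor your proposal supplies the missing step.

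Your suggested repair --- show that the class $[e]$ does not lie in the Chern-class subring of $(H^*(BG)/Tor)\otimes\bZ/2$, using Benson--Wood, the Dickson identities of Lemmas 4.2--4.3, and the weight structure on the maximal elementary abelian $A$ --- is a reasonable program, and you are right that Margolis homology of $H^*(BA;\bZ/2)$ vanishing in positive degrees makes naive restriction useless (indeed $e|_A$ is itself $Q_0$-exact). But as you say yourself, you have not carried out the crucial verification for general $n$, so the proposal does not establish the theorem beyond the cases where the torsion correction is absent. To summarize: your construction of $e'$ matches the paper; you have identified a genuine gap in the paper's proof of $e'\notin\Img(\rho^*_{CH})$; and your proposed alternative route is plausible but incomplete, proving the statement only for $Spin(7)$ (and whatever low-$n$ cases one checks by hand).
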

\begin{proof}
First note that $e|C=z^{2^h}$, which is not in the $Q_0$-image,
and hence $e$ itself is not.  From the preceding Lemma 4.2
we see $Q_0e=0$.  By Kono's result, we see
\[ 0\not =e\in H(H^*(BG;\bZ/2);Q_0)
  \cong  (H^*(BG)/Tor)\otimes \bZ/2).\]
Take  $e''\in H^*(BG)/Tor$ with that $e''=e$ $mod(2)$.
Then
\[ e'=\rho_H^*(e'')\not =0\ \ in\ \  H^*(BG)/Tor\subset 
H^*(BT)^W.\]

From the preceding Lemma 4.2, $Q_{h-1}(e)\not =0$.
hence  we see $e'\not \in \rho_{CH}^*$
by the existence of $Q_i$ in the motivic cohomology by
Voevodsky.
\end{proof}

Let $\Delta_{\bC}$ be the complex representation
induced from the real representation $\Delta$.  Then we see
(see Theorem 4.2 in [Sc-Ya])
\[ c_{2^{h-1}}(\Delta_{\bC})|C=2w_{2^h}|C=2z^{2^h}.\]
Of course this element $c_{2^{h-1}}(\Delta_{\bC})$ is in the
Chow ring $CH^*(BG)$.  hence we see that we can take
$2e'\in Im(\rho_{CH}^*)$.  

From the result by Benson-Wood,
we know $\rho^*_{H}$ is surjective in this (real) case.
Hence from Lemma 3.5 (or $Q_{h-1}(e)\not =0$), we have
\begin{cor}
Let $X=B\bG_m\times BSpin(n)$ with $n=8\ell, 8\ell\pm 1$
The element 
$1\otimes e\in H^{2^h}(X)\cap H^{2^{h-1},2^{h-1}}(X)$
 gives a counterexample
for the integral Hodge and the integral Tate conjectures, namely
$1\otimes e\not \in Im(cl_{H/Tor}$).
\end{cor}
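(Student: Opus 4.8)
The plan is to deduce the statement from Theorem 4.6 together with Lemma 3.5 (and the surjectivity of $\rho_H^*$ in the real case, which is the Benson--Wood theorem). First I would recall that by Theorem 4.6 there is an element $e'' \in H^*(BG)/Tor$ with $e'' \equiv e \pmod 2$ and $e' = \rho_H^*(e'') \notin \Img(\rho_{CH}^*)$, while at the same time $2e' \in \Img(\rho_{CH}^*)$ because $2e'$ is hit by the Chern class $c_{2^{h-1}}(\Delta_{\bC})$ (using $c_{2^{h-1}}(\Delta_{\bC})|C = 2z^{2^h}$). Thus $e''$ is an element of $H^*(BG)$ whose double is represented by a Chern class, but which is itself not in the image of the cycle map $cl$. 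This is exactly the hypothesis of Lemma 3.5 with $p = 2$, $s = 1$, $x = e'$, $y = e''$: so Lemma 3.5 applies verbatim and yields that $1 \otimes e'' \in H^*(B\bG_m \times BG)$ is a counterexample to the integral Hodge conjecture, and also to the integral Tate conjecture over a finite field $k$ with $ch(k) \neq 2$.

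Next I would address the slight discrepancy between the statement (which asserts $1\otimes e \notin \Img(cl_{H/Tor})$, with the honest class $e$) and what Lemma 3.5 literally gives (a statement about $e''$). Since $e'' \equiv e \pmod 2$ and $e$ itself is a $2$-torsion class (we have $Q_0 e = 0$ by Lemma 4.2, and more precisely $e|C = z^{2^h}$ shows $e$ is not a $Q_0$-boundary, so $e$ represents the nonzero torsion-free class only after reduction), one should be careful about which representative is meant. The cleanest formulation is: the torsion-free class $\bar e \in H^*(BG)/Tor$ lifting $e \bmod 2$ — equivalently $e''$ — has the property that $1\otimes \bar e \notin \Img(cl_{/Tor})$, and this is what the displayed statement is abbreviating. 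I would simply note the bidegree bookkeeping: $e \in H^{2^h}$ and $c_{2^{h-1}}(\Delta_{\bC}) \in CH^{2^{h-1}}$ sits in $H^{2^h} \cap H^{2^{h-1},2^{h-1}}$, so $1\otimes \bar e$ lives in $H^{2^h}(X) \cap H^{2^{h-1},2^{h-1}}(X)$ as claimed, and it is a Hodge class precisely because $2\cdot(1\otimes\bar e)$ is algebraic hence of type $(*,*)$, forcing $1\otimes\bar e$ to be of type $(*,*)$ as well (the $(*,*)$-part of $H^{2*}(X;\bC)$ being a $\bQ$-subspace).

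The only real content beyond citing Lemma 3.5 is making sure its hypotheses are met, and the single nontrivial input there is that $e'$ — equivalently the torsion-free lift $\bar e$ — is genuinely \emph{not} in $\Img(\rho_{CH}^*)$, i.e. not in $\Img(cl_{/Tor})$; but this is exactly Theorem 4.6, whose proof uses $Q_{h-1}e = d_0 e \neq 0$ (Lemma 4.3) and the compatibility of Voevodsky's motivic $Q_i$ with the topological $Q_i$ under realization, together with $Q_i|_{CH^*/p} = 0$. So the main obstacle is not in this corollary at all — it has already been overcome in Theorem 4.6 — and the corollary itself is essentially a formal packaging: combine Theorem 4.6 (the class is not algebraic), the Chern-class computation (twice the class is algebraic, hence it is Hodge/Tate), and Lemma 3.5 (this configuration is by definition a counterexample to the integral Hodge and Tate conjectures for $X = B\bG_m \times BG$, which by Totaro is approximable by smooth projective varieties).
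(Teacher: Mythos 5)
Your proposal is correct and matches the paper's approach: the paper's own proof is the one-line remark that the corollary follows from Lemma~3.5 (or, alternatively, from $Q_{h-1}(e)\neq 0$), and you have filled in exactly the verification it leaves implicit — that Theorem~4.4 produces the required $x=e'\notin\Img(\rho^*_{CH})$, $y=e''$ with $\rho^*_H(e'')=e'$, and the Chern class $c_{2^{h-1}}(\Delta_{\bC})$ supplies the hypothesis that $2e''\in\Img(cl)$, so Lemma~3.5 applies with $s=1$. Your care in distinguishing the mod-$2$ class $e$ from its torsion-free integral lift $e''$, and the bidegree bookkeeping, are accurate refinements of what the paper states tersely.
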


\section{cobordism}

Let $BP^*(X)$ be the Brown-Peterson cohomology theory
with the coefficients ring  $BP^*=\bZ_{(p)}[v_1,v_2,...]$ of degree
$ |v_i|=-2(p^i-1)$.  Let $\Omega^*(X)=MGL^{2*,*}(X)
\otimes_{MU^*}BP^*$ be the $BP^*$-version of the algebraic cobordism ([Vo1], [Le-Mo1,2]) such that
$ \Omega^*(X)\otimes _{BP^*}\bZ_{(p)}\cong CH^*(X).$

We consider the cobordism version of the map $\rho_H^*$
\[ \rho_{\Omega}^*:\Omega^*(BG)\to
\Omega^*(BT)^W\cong  BP^*(BT)^{W}.\]
Although  $\bA^1$-homotopy category has the Becker-Gottlieb transfer
(this fact is announced in [Ca-Jo]),
we see
\[\tau\cdot \rho_{\Omega}^*=\chi(G/T)\ mod(v_1,v_2...)\]
which is not $\chi(G/T)$ in general.
So we can not have 
%$\Omega^*(BG)/Tor \subset \Omega^*(BT)^W.$
the $\Omega^*$-version of Feshbach's theorem.

We are interesting in an element $x\in \Omega^*(BT)^W$ which 
is in $Im(\rho_{\Omega}^*)$.  Of course, it is torsion free
in $\Omega^*(X)$, but there is a case such  that 
\[   0\not =x\in CH^*(BG)/p\cong \Omega^*(BG)\otimes_{BP^*}\bZ/p\]
and $x$ is $p$-torsion in $CH^*(BG).$

\begin{lemma}  
Let $f\in H^*(BT)^W$,  $f\not =0$ $mod(p)$, and identify $f\in gr\Omega^*(BT)\cong \Omega^*\otimes H^*(BT)$.
Let $f\not \in Im(\rho^*_{\Omega})$ but $v_mf\in
Im(\rho_{\Omega}^*)$.  Then $v_jf\in Im(\rho_{\Omega}^*)$
for all $0\le j\le m$.
Namely, there is $c_j\in \Omega^*(BG)$ such that
$\rho_{\Omega}^*(c_j)=v_jf,$
\[  c_j\not =0\in \Omega^*(BG)\otimes_{BP^*}\bZ/p\cong
   CH^*(BG)/p,\]
moreover $pc_j=0$ in $CH^*(BG)$ for $j>0$.
\end{lemma}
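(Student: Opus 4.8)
The plan is to reduce the statement to the construction, for each $0\le j\le m$, of an element $c_j\in\Omega^*(BG)$ with $\rho^*_{\Omega}(c_j)=v_jf$ which is \emph{not} contained in the ideal $(p,v_1,v_2,\dots)\Omega^*(BG)$; the two ``moreover'' clauses then follow formally. Throughout put $v_0=p$, so $v_0f=pf$, and use the natural surjections $\pi\colon\Omega^*(BG)\to CH^*(BG)=\Omega^*(BG)\otimes_{BP^*}\bZ_{(p)}$ and its analogue over $BT$, which kill all $v_i$ and are compatible with $\rho^*_{\Omega},\rho^*_{CH}$. If $c_j\notin(p,v_1,v_2,\dots)\Omega^*(BG)$ then its class in $CH^*(BG)/p=\Omega^*(BG)\otimes_{BP^*}\bZ/p$ is nonzero, which is the first clause. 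For $j\ge 1$, applying $\pi$ yields $\rho^*_{CH}(\pi(c_j))=$ (the image of $v_jf$) $=0$, because $v_j\mapsto 0$; since $CH^*(BT)^W\cong H^*(BT)^W$ is torsion free, $\Ker\rho^*_{CH}$ is exactly the torsion ideal $Tor$, so $\pi(c_j)\in Tor$, and since $Tor$ is annihilated by $p$ in the situations considered (for $G=Spin(n)$, $p=2$, by Kono's theorem of \S 4 together with the computations of \S\S 4--6) we get $pc_j=0$ in $CH^*(BG)$. Thus $\pi(c_j)$ ($j\ge 1$) is a torsion class killed by $\rho^*_{CH}$, i.e. a Griffiths-type element, now detected by $\rho^*_{\Omega}$ through $c_j$ --- which is the content of Theorem 1.2.

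For the construction I would argue by descending steps, passing from $v_{j+1}f\in\Img(\rho^*_{\Omega})$ to $v_jf\in\Img(\rho^*_{\Omega})$ (for $0\le j<m$), using the Landweber--Novikov (Quillen) operations $r_\alpha$ on $\Omega^*(-)=BP^*(-)$. These are stable natural operations, hence commute with $\rho^*_{\Omega}$, so $\Img(\rho^*_{\Omega})$ is a subring stable under all $r_\alpha$; it also contains $BP^*$ and all restricted Chern classes of representations of $G$. Take $\alpha=(p^{j},0,0,\dots)$; the Araki (or Hazewinkel) formula for $\eta_R(v_{j+1})$ gives $r_\alpha(v_{j+1})=v_j+P$ with $P$ a polynomial in $v_1,\dots,v_{j-1}$ of degree $|v_j|$ (for $j=1$ a degree count forces $P=0$ exactly), while the $r_\beta(v_{j+1})$ for $0<\beta<\alpha$ lie in the ideal $(v_1,\dots,v_{j-1})$. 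Writing $v_{j+1}f=\rho^*_{\Omega}(c_{j+1})$ and expanding by the Cartan formula,
\[ r_\alpha(v_{j+1}f)=v_jf+v_{j+1}\,r_\alpha(f)+Pf+\sum_{0<\beta<\alpha}r_\beta(v_{j+1})\,r_{\alpha-\beta}(f)=\rho^*_{\Omega}(r_\alpha(c_{j+1}))\in\Img(\rho^*_{\Omega}); \]
one then defines $c_j$ by subtracting from $r_\alpha(c_{j+1})$ suitable lifts of the error terms $v_{j+1}r_\alpha(f)+Pf+\sum r_\beta(v_{j+1})r_{\alpha-\beta}(f)$, once these are shown to lie in $\Img(\rho^*_{\Omega})$ --- e.g. by combining several $r_\alpha$ so that the error terms cancel, or by computing the relevant $r_\gamma(f)$ explicitly. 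For $j=0$ this step is unnecessary: $v_0f=pf$ is already in $\Img(\rho^*_{CH})\subseteq\Img(\rho^*_{\Omega})$ (for $G=Spin(n)$, $2f=2e'=c_{2^{h-1}}(\Delta_{\bC})|_T$). Finally one checks, by tracking leading terms in this construction, that the $c_j$ can be taken outside $(p,v_1,v_2,\dots)\Omega^*(BG)$.

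The main obstacle is precisely the bookkeeping just described: showing that all the Cartan error terms lie in $\Img(\rho^*_{\Omega})$ and that some representative $c_j$ can be chosen primitive (outside $(p,v_1,v_2,\dots)\Omega^*(BG)$). For the case the paper actually needs --- $G=Spin(n)$, $p=2$, $f=e'$, with $Q_{h-1}e\ne 0$ responsible for $e'\notin\Img(\rho^*_{\Omega})$, with $2e'=c_{2^{h-1}}(\Delta_{\bC})|_T$ settling $j=0$, and with the relevant part of $\Omega^*(BG)=BP^*(BSpin(n))$ known explicitly --- these verifications are concrete (they are carried out in \S 6), so the argument is unconditional there; the general statement needs only the two structural facts about $\Img(\rho^*_{\Omega})$ recorded above, namely stability under the $r_\alpha$ and containment of all restricted Chern classes of representations of $G$.
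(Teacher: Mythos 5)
Your proposal rests on the same core tool as the paper — Landweber--Novikov operations applied to $v_mf$ — but it misses the one observation that makes the argument short, and you correctly flag the resulting gap yourself: you have no clean way to dispose of the Cartan ``error terms'' $r_{a''}(f)$ for $a''>0$, and you propose to do case-by-case bookkeeping, combining operations so that errors cancel, or verifying things in $Spin(n)$. The paper's proof avoids all of this. It works in the associated graded $gr\,\Omega^*(BT)\cong\Omega^*\otimes H^*(BT)$ (exact because $H^*(BT)$ is concentrated in even degrees and torsion free), and notes that there $r_{a''}(f)=0$ for every $a''$ with $|a''|>0$: since $f$ sits in the $H^*(BT)$-factor and the operations raise the Atiyah--Hirzebruch filtration, the image of $f$ under any positive-degree $r_{a''}$ lands in strictly higher filtration and hence vanishes in the graded piece. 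Thus the Cartan expansion collapses to $r_a(v_mf)=r_a(v_m)\cdot f$, and a single operation with $r_{\beta_j}(v_m)=v_j$ gives $v_jf\in\Img(\rho^*_\Omega)$ in one step, no descending induction and no residual error terms. This is the missing idea; without it your construction does not close.

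Two smaller discrepancies. First, for $j=0$ you appeal to the explicit Chern class $c_{2^{h-1}}(\Delta_{\bC})|_T$, which is $Spin(n)$-specific, whereas the same $r_{\beta_0}$ argument already yields $pf\in\Img(\rho^*_\Omega)$ in general. Second, for the clause $pc_j=0$ in $CH^*(BG)$ you route through ``$\Ker\rho^*_{CH}=Tor$'' together with ``$Tor$ is $p$-torsion,'' which is a fact about $Spin(n)$ (Kono) and is not assumed in the lemma; the paper instead observes directly that $pv_jf=v_j(pf)\in v_j\cdot\Img(\rho^*_\Omega)$, so $pc_j$ dies once $v_j$ is killed in the passage to $CH^*$. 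Your clause that $c_j\neq 0$ in $CH^*(BG)/p$ is argued essentially as in the paper: since $f\notin\Img(\rho^*_\Omega)$, $v_jf$ is a $BP^*$-module generator of $\Omega^*(BT)^W\cap\Img(\rho^*_\Omega)$ over $f$, so $c_j\notin(p,v_1,v_2,\dots)$; that part is fine.
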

\begin{proof}
We consider the  Landweber-Novikov  cohomology operation $r_{a}$
in $gr\Omega^*(BT)\cong \Omega^*\otimes H^*(BT).$  By Cartan formula,
\[ r_a(v_mf)=\sum_{a=a'+a''}r_{a'}(v_m)r_{a''}(f).\]
Here  $r_{a''}(f)=0$ for $|a''|>0$ in $gr \Omega^*(BT)\cong \Omega^*\otimes H^*(BT)$.  We have operations $r_{\beta_j}(v_m)=v_j$ for $j\le m$, and we have the first statement.

  From the assumption, $f$ itself is not in the cycle map $\rho_{\Omega^*}$.
 Hence $v_jf$ is a $BP^*$-module generator in $\Omega^*(BT)^W\cap Im(\Omega^*(BG)))$.
Hence it is also non zero in $CH^*(BG)/p$.
Since $pv_jf=v_jpf\in v_jIm(\Omega^*(BG))$,
we have $pc_j=0\in CH^*(BG)$.
\end{proof}

%We can not prove the first assumption in the above theorem
%for $G=Spin(n)$, but we give a similar assumption.
%
%We recall the algebraic cobordism of $BC=B\bZ/2$
%\[ \Omega^*(BC)\cong BP^*(BC)\cong BP^*[[u]]/[2](u)\]
%where $u=z^2$ in $H^2(BC;\bZ/2)$ 
%and $[2](u)=u+_{F}u=2u+v_1u^2+...$
%the sum of the formal product for $BP$-theory.
%\begin{cor}  Let $G=Spin(n)$.  Let $x\in \Omega^*(BG)$ with
%$x|C=v_mu^{h'}$ $mod(u^{h'+1})$ where $h'=2^{h-1}$
% for some $m\ge 1$.  Then there are elements
%$c_i\in \Omega^*(BG)$ for $0\le i\le m$ with $c_i|C=v_iu^{h'}$
%mod $u^{h'+1}$  such that
%$ c_i\not =0\in  CH^*(BG)/2,$ and $2c_j=0$ in $CH^*(BG)$ %for $j>0$.
%\end{cor}

We consider the Atiyah-Hirzebruch spectral sequence
(AHss)
\[ E_2^{*,*'}\cong H^*(X;BP^{*'})\Longrightarrow BP^*(X)\]
It is known that
\[ (*)\quad d_{2p^i-1}(x)=v_i\otimes Q_i(x)\quad mod(p,v_1,...,v_{i-1}).\]
In general, there are many other types of non zero differential.
However we consider cases that differentials are only of 
this form.

\begin{lemma} Let $X=BSpin(n)$
and $n=8\ell,8\ell\pm1$.  In AHss for $BP^*(X)$,
assume all non zero differentials are of form $(*)$.  Then
$pe,v_1e,...,v_{h-2}e$ are all permanent cycles.
\end{lemma}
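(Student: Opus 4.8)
The plan is to analyze the Atiyah-Hirzebruch spectral sequence for $BP^*(BSpin(n))$ under the working hypothesis that every nonzero differential has the form $(*)$, and to show that $pe, v_1e,\dots,v_{h-2}e$ survive to $E_\infty$. First I would recall from Lemma 4.2 that $Q_0e=0$ and $Q_ke=0$ for $0\le k\le h-2$, while $Q_{h-1}e=d_0e\neq 0$. The element $e\in E_2^{2^h,0}\cong H^{2^h}(X;\bZ/2)$ (after reducing mod $p$, since $e$ has order $2$ in the relevant sense — more precisely $e$ is a mod $2$ class, and the integral lift $e''$ with $e''\equiv e \bmod 2$ satisfies $2e''\in Tor$-free part) is therefore killed only potentially by the differential $d_{2p^{h-1}-1}$, which sends $e$ to $v_{h-1}\otimes Q_{h-1}e = v_{h-1}\otimes d_0e \bmod(p,v_1,\dots,v_{h-2})$. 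So $e$ itself does not survive, but the products $p e$ and $v_je$ for $1\le j\le h-2$ need separate treatment because they live in higher filtration/coefficient degree and the differential $(*)$ acts on them via the Cartan/derivation formula.

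The key computational step is to track what $(*)$ does to $v_je$ and to $pe$. Since the differentials $d_r$ in the AHss are $BP^*$-linear (they are derivations over the coefficient ring in the appropriate sense, or at least $v_i$-linear modulo the indeterminacy in $(*)$), one has $d_{2p^i-1}(v_je) = v_j\, d_{2p^i-1}(e) = v_j v_i\otimes Q_i(e)$ modulo $(p,v_1,\dots,v_{i-1})$. For $i\le h-2$ this vanishes because $Q_i e=0$. For $i=h-1$ it gives $v_jv_{h-1}\otimes d_0 e$; but this is to be read modulo $(p,v_1,\dots,v_{h-2})$, and since $1\le j\le h-2$ the coefficient $v_j$ lies in the ideal $(v_1,\dots,v_{h-2})$, so the right-hand side is zero in the relevant quotient, meaning $d_{2p^{h-1}-1}(v_je)$ has no term of the expected leading form; under the standing assumption that all nonzero differentials are of form $(*)$, this forces $d_{2p^{h-1}-1}(v_je)=0$. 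One must also check $v_je$ is not hit: by the same token, any incoming differential would be of form $(*)$ landing in the column of $v_je$, i.e. $v_i\otimes Q_i(w)=v_je$ for some class $w$; I would argue on degree/coefficient grounds that the only candidate is $w$ with $Q_{h-1}(w)$ proportional to $e$, but then one checks such a $w$ does not exist in $H^*(BSpin(n);\bZ/2)$ in the right degree (using Quillen's description of $H^*(BSpin(n);\bZ/2)$ and Kono's no-higher-torsion result). The case of $pe$ is similar: $d_{2p^i-1}(pe)=p\,d_{2p^i-1}(e)$ is zero mod $p$ automatically for every $i$, and incoming differentials are again ruled out on degree grounds.

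I expect the main obstacle to be the bookkeeping around the indeterminacy ``$\bmod(p,v_1,\dots,v_{i-1})$'' in formula $(*)$, because to conclude that a differential vanishes one needs that its genuinely leading term vanishes, not merely that the displayed representative does; this is exactly where the hypothesis ``all nonzero differentials are of form $(*)$'' is doing heavy lifting, and I would want to state carefully that under this hypothesis, $d_{2p^i-1}(x)=0$ whenever $v_i\otimes Q_i(x)\equiv 0 \bmod(p,v_1,\dots,v_{i-1})$ AND the next potential differential $d_{2p^{i+1}-1}$ also has vanishing $(*)$-term, proceeding inductively up the tower of Milnor operations. The secondary obstacle is confirming there are no targets for $v_je$ and $pe$, which requires knowing $H^*(BSpin(n);\bZ/2)$ well enough (via Quillen) to see that no mod $2$ class $w$ of the appropriate degree has $Q_i(w)$ equal to $v_j e$'s underlying class for the relevant $i$; the real-case restriction $n=8\ell,8\ell\pm1$ and the explicit form $e=w_{2^h}(\Delta)$ with its restriction to elementary abelians should make this tractable. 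Once both are established, $pe$ and $v_je$ ($1\le j\le h-2$) are cycles not bounding, hence permanent cycles, which is the claim.
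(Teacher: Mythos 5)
Your argument has a genuine gap at the central step. You try to conclude $d_{2p^{h-1}-1}(v_je)=0$ by observing that the leading term $v_jv_{h-1}\otimes d_0e$ lies in the ideal $(p,v_1,\dots,v_{h-2})$ and hence ``vanishes in the relevant quotient,'' invoking the hypothesis $(*)$. But this is a misreading of the indeterminacy in $(*)$. The formula $d_{2p^i-1}(x)=v_i\otimes Q_i(x)\bmod(p,v_1,\dots,v_{i-1})$ means the differential equals the displayed term up to an error in the ideal; when the displayed term itself lies in the ideal, $(*)$ gives you no information at all, not a conclusion that the differential is zero. What one actually needs is that the class $v_jd_0e$ (and $p\,d_0e$) is already zero as an element of $E_{2^{h}-1}$, so that $d_{2^h-1}(v_je)=v_jv_{h-1}d_0e=v_{h-1}(v_jd_0e)$ genuinely vanishes. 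You never establish this, and similarly the remark ``$p\,d_{2p^i-1}(e)$ is zero mod $p$'' proves nothing, since ``zero mod $p$'' is not ``zero.''

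The missing ingredient is precisely the first half of the paper's proof, which you do not touch: Lemma 4.2 gives $Q_{j-1}d_j=d_0$ and $Q_k(d_j)=0$ for $k<j-1$, so the classes $d_je$ ($1\le j\le h-1$) support differentials $d_{2^j-1}(d_je)=v_{j-1}\otimes Q_{j-1}(d_je)=v_{j-1}d_0e$. These incoming differentials kill $2d_0e, v_1d_0e,\dots,v_{h-2}d_0e$ before page $2^h-1$, giving $(2,v_1,\dots,v_{h-2})\cdot d_0e=0$ in $E_{2^h-1}$. Only then does $BP^*$-linearity of $d_{2^h-1}$ on the $BP^*$-free summand $BP^*\{e\}$ yield $\mathrm{Ker}(d_{2^h-1})\cap BP^*\{e\}\supset(2,v_1,\dots,v_{h-2})\{e\}$, and the hypothesis $(*)$ closes off higher differentials. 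Without this use of the Dickson classes $d_j$ and their $Q$-actions, the argument that $pe,v_1e,\dots,v_{h-2}e$ are cycles does not go through; the ideal bookkeeping you rely on cannot substitute for it.
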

\begin{proof} We use Lemma 4.2 in the preceding section.  
First recall $Q_i(d_0)=0,\ Q_i(e)=0$ for $i<h-1$.
Therefore $d_0e$ exists in $E_{2^{h}-1}$.

Since $Q_{j-1}d_j=d_n$ and $Q_k(d_j)=0$ for $k<j-1$,
the differential in AHss is
\[ d_{2^j-1}(d_je)=v_{j-1}\otimes Q_{j-1}(d_je)=v_{j-1}d_0e.\]
Hence we have  
$  (2,v_1,v_2,...,v_{h-2})(d_0e)=0$ in $E_{2^{h}-1}^{*,*'}.$

Now we study the differential 
\[d_{2^{h}-1}(e)=v_{h-1}Q_{h-1}(e)=v_{h-1}d_0e.\]
Note that $e$ is $BP^*$-free in $E_{2^{h}-1}^{*,,*'}$,
since $e|C=z^{2^h}$ and $e\not \in Im(Q_i)$.
Hence we have
\[ Ker(d_{2^{h}-1})\cap BP^*\{e\}
\cong Ideal(2,v_1,...,v_{h-2})\{e\}.\]
(In this paper, $R\{a,b,...\}$ means the $R$-free module
generated by $a,b,...$)
By the assumption $(*)$ for differentials, $pe$,$v_1e$,...,
$v_{h-2}e$ are all permanent cycles.
\end{proof}
For  $7\le n\le 9$, AHss converging $BP^*(BSpin(n))$ is computed in [Ko-Ya], ([Sc-Ya] also),
 and it is known that $(*)$ is satisfied.
\begin{cor}
For $n=7,8$ (resp. $n=9$), the elements $pe,v_1e$
(resp. $pe,v_1e,v_2e$) are in $Im(\rho^*_{BP})\subset BP^*(BT)^W$ ( but $e$ itself is not).
\end{cor}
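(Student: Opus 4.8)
The plan is to read the statement off the explicit Atiyah--Hirzebruch spectral sequence (AHss) computation of [Ko-Ya] (see also [Sc-Ya]) by feeding it into Lemma 5.2. First I would record the numerology: these are the three smallest real cases ($\ell=1$ in $n=8\ell-1,8\ell,8\ell+1$), so by the formula recalled in $\S 4$ one has $h=3$ for $n=7,8$ and $h=4$ for $n=9$; hence $h-2=1$ for $n=7,8$ and $h-2=2$ for $n=9$, which already matches the two lists $\{pe,v_1e\}$ and $\{pe,v_1e,v_2e\}$. For $7\le n\le 9$ the AHss converging to $BP^*(BSpin(n))$ is computed completely in [Ko-Ya], where every nonzero differential has the form $(*)$; so the hypothesis of Lemma 5.2 is met and $pe=v_0e,v_1e,\dots,v_{h-2}e$ are permanent cycles.

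Next I would promote ``permanent cycle'' to ``class detected in $\Img(\rho^*_{BP})$''. From the [Ko-Ya] computation each $v_je$ ($0\le j\le h-2$) is moreover non-trivial at $E_\infty$ --- it is not a boundary, since the only candidate differential, the type-$(*)$ one $d_{2p^j-1}$, would require $Q_j(x)=e$ for some $x\in H^*(BSpin(n);\bZ/2)$, which is impossible because $e|C=z^{2^h}\ne 0$ while $Q_j$ annihilates the image $\bZ/2[z^{2^h}]$ of restriction to the center $C\cong\bZ/2$ (on which $\Delta$ is $2^h$ copies of the sign representation) for every $j<h$. Hence each $v_je$ is the AHss leading term of some $c_j\in BP^*(BSpin(n))$. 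Now I would use naturality of the AHss along $\rho:BT\hookrightarrow BSpin(n)$: since $H^*(BT)$ is torsion free and concentrated in even degrees, the AHss for $BP^*(BT)$ collapses, $E_\infty(BT)=E_2(BT)=BP^*\otimes H^*(BT)$, and the induced map on $E_2$ is $\rho^*_H\otimes\mathrm{id}$. By Theorem 4.4, $e'=\rho^*_H(e'')\ne 0$ in $H^*(BT)^W$ for an integral lift $e''$ of $e$, so $\rho^*_{BP}(c_j)$ has leading term $v_je'$; this --- equivalently, Lemma 5.1 applied with $f=e'$ and $m=h-2$, whose proof via the Landweber--Novikov operations $r_{\beta_j}$ carries over verbatim with $BP^*$ in place of $\Omega^*$ --- is exactly the assertion that $v_je=v_je'$ lies in $\Img(\rho^*_{BP})\subset BP^*(BT)^W$ for $0\le j\le h-2$.

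For the parenthetical claim that $e$ itself is not in $\Img(\rho^*_{BP})$: by Lemma 4.3 and $(*)$, $e$ supports the nonzero differential $d_{2^h-1}(e)=v_{h-1}d_0e$, so $e$ is not a permanent cycle; thus no element of $BP^*(BSpin(n))$ has $e$ (equivalently $e''$, lower-degree accidents being excluded by the [Ko-Ya] computation) as AHss leading term, while $e'$ lies in the bottom Atiyah--Hirzebruch filtration of $BP^*(BT)$. This is a shadow of the fact $e'\notin\Img(\rho^*_{CH})$ of Theorem 4.4. The main obstacle is the one genuinely computational input, borrowed from [Ko-Ya]: that the AHss for $BP^*(BSpin(n))$, $n=7,8,9$, has only differentials of the form $(*)$ and no exotic ones. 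Granting that, the remainder is bookkeeping with Lemmas 5.2, 5.1 and 4.3 together with the collapse of the AHss for $BT$.
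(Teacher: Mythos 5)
Your proposal is correct and follows essentially the same route as the paper. The paper's proof is terse: it simply observes (in the sentence preceding the corollary) that for $7\le n\le 9$ the AHss for $BP^*(BSpin(n))$ is computed in [Ko--Ya] and that all nonzero differentials have the form $(*)$, so Lemma~5.2 applies; you make explicit the numerology $h=3$ (for $n=7,8$) and $h=4$ (for $n=9$), the passage from ``permanent cycle'' to ``detected in $\Img(\rho^*_{BP})$'' via naturality and collapse of the AHss for $BT$, and the parenthetical ``$e$ itself is not'' via the differential $d_{2^h-1}(e)=v_{h-1}d_0e$ --- all of which the paper leaves implicit.
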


Let $K(n)^*(X)$ be the  Morava $K$-theory
with the coefficients ring $K(n)^*\cong
\bZ/p[v_n,v_n^{-1}]$, and 
$AK(n)^*(X)=AK(n)^{2*,*}(X)$ its algebraic version.
Here we consider  an assumption such that
\[ (**)\ \ AK(n)^*(BG)\to K(n)^*(BG)\quad is\ surjecive.\]
It is known by Merkurjev (see [To1] for details) that $AK^*(BG)\cong K^*(BG)$
for the algebraic $K$-theory $AK^*(X)$ and the
complex $K$-theory $K^*(X)$, which induces 
$AK(1)^*(BG)\cong K(1)^*(BG)$.  Hence $(**)$ is correct
when  $n=1$ for all $G$.
\begin{lemma}  Let $X=BSpin(n),\ n=8\ell, 8\ell\pm 1$ and suppose $(*), (**)$.
Moreover let $h\ge 3$.
Then $v_{h-2}e\in Im(\rho^*_{\Omega})$, and hence
there is $c_i\in CH^*(X)$ for $0\le i\le h-2$ in Lemma 5.1.
\end{lemma}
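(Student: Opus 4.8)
The plan is to reduce everything to the single assertion that $v_{h-2}e\in\Img(\rho^*_{\Omega})$. Granting this, the elements $c_i$ for $0\le i\le h-2$ are produced by Lemma 5.1 applied with $f=e$ and $m=h-2$; its remaining hypothesis, $e\notin\Img(\rho^*_{\Omega})$, holds because a preimage $c\in\Omega^*(BG)$ of $e$ would reduce, modulo $(v_1,v_2,\dots)$, to a class $\bar c\in CH^*(BG)$ with $\rho^*_{CH}(\bar c)=e$ in $CH^*(BT)^W\cong H^*(BT)^W$, contradicting Theorem 4.4. So the entire content of the lemma is that one assertion.

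I would first record a topological lift. By Lemma 4.3 we have $Q_{h-2}(e)=0$, so under hypothesis $(*)$ Lemma 5.2 shows that $v_{h-2}e$ is a permanent cycle in the Atiyah--Hirzebruch spectral sequence for $BP^*(BG)$; fix $b\in BP^*(BG)$ detecting it, so that $\rho^*_{BP}(b)=v_{h-2}e$ in $\Omega^*\otimes H^*(BT)$, the associated-graded setting in which Lemma 5.1 is phrased. It then remains to arrange that $b$ lies in the image of the realization map $\Omega^*(BG)\to BP^*(BG)$.

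For this I would bring in the algebraic Morava $K$-theory $AK(h-2)^*$; it is precisely to make $K(h-2)$ a genuine Morava $K$-theory, i.e.\ to have $h-2\ge1$, that the hypothesis $h\ge3$ is imposed. Once more, because $Q_{h-2}(e)=0$ and $(*)$ holds, $e$ is a nonzero permanent cycle in the topological $K(h-2)^*$-spectral sequence for $BG$ --- nonzero since $e|C=z^{2^h}$ survives in $K(h-2)^*(BC)$, and not a boundary since $e\notin\Img(Q_i)$ for all $i$ (as used in the proof of Lemma 5.2) --- hence it defines $\bar e\in K(h-2)^*(BG)$. By hypothesis $(**)$, $\bar e$ lifts to $AK(h-2)^*(BG)$, and since $v_{h-2}$ is a unit in $K(h-2)^*=AK(h-2)^*$, the image of $b$ in $K(h-2)^*(BG)$, namely $v_{h-2}\bar e$, is again in the image of $AK(h-2)^*(BG)$. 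I would then compare, along the realization maps and the coefficient map $BP^*\to K(h-2)^*$, the motivic analogue of the Atiyah--Hirzebruch spectral sequence converging to $\Omega^*(BG)$ with the topological ones converging to $BP^*(BG)$ and to $K(h-2)^*(BG)$: the coefficients of $AK(h-2)^*$ form a graded field, so its motivic spectral sequence has no extension problems and collapses after the differential matching $(*)$, while the vanishing of $Q_i$ on $CH^*(BG)/p$ (Voevodsky) constrains the corresponding motivic differentials in the $\Omega^*$-sequence. Matching the topological class $b$ with its $AK(h-2)$-lift across this comparison should produce a class $c\in\Omega^*(BG)$ realizing $b$, whence $\rho^*_{\Omega}(c)=v_{h-2}e$ in $\Omega^*\otimes H^*(BT)$, as wanted.

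The step I expect to be the real obstacle is this last one: promoting the topological class and its $AK(h-2)$-lift to a genuine element of $\Omega^*(BSpin(n))$ requires controlling the motivic spectral sequence for $BSpin(n)$ well enough, and that is exactly where the two hypotheses do their work --- $(*)$ pins down the topological differential pattern (and hence which motivic differentials must vanish once $Q_i$ is known to kill Chow classes), while $(**)$ guarantees that the relevant chromatic layer is attained algebraically. For the cases feeding Theorem 1.2 the situation is favorable: $(*)$ is known for $7\le n\le9$ by [Ko-Ya] (see Corollary 5.3), and $(**)$ holds for $n=1$ by Merkurjev's theorem (see [To1]), so for $Spin(7)$ and $Spin(8)$, where $h=3$, both hypotheses are available and the conclusion is unconditional.
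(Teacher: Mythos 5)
Your proposal identifies the right ingredients — the AHSS permanent cycle $v_{h-2}e$ (via Lemma 5.2), the role of $h\ge 3$ in making $K(h-2)$ a genuine Morava $K$-theory, the use of $(**)$ to lift into $AK(h-2)^*(BG)$ — but it stalls exactly where you flag "the real obstacle": getting from an element of $AK(h-2)^*(BG)$ to an element of $\Omega^*(BG)$ by a motivic spectral-sequence comparison. That comparison is not carried out, and you (rightly) present it only as "should produce a class." You also never address the ambiguity in the $v_{h-2}$-power: even if your comparison produced some lift of the $K(h-2)$-class of $e$, nothing in your argument pins it down to $v_{h-2}^{1}e$ rather than $v_{h-2}^{s}e$ for some other $s\ge 1$.

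The paper takes a shorter route that avoids both problems. It uses the structural fact that $AK(h-2)^*(X)\cong K(h-2)^*\otimes CH^*(X)/I$ for some ideal $I$; combined with $(**)$ this immediately furnishes a class $c\in CH^*(X)$ whose image under $cl_\Omega:\Omega^*(X)\to BP^*(X)$ is $v_{h-2}^{s}e$ — so the lift to $\Omega^*$ is free, not a spectral-sequence comparison. The exponent $s$ is then controlled by degrees: $s\ge 1$ since $e\notin\Img(cl_\Omega)$ (your Theorem 4.4 argument), and $|v_{h-2}^{s}e|=2^{h}-2(2^{h-2}-1)s>0$ forces $s\in\{1,2\}$ (for $h\ge 4$; for $h=3$ one must also dispose of $s=3$ using $CH^{1}(BSpin(n))=0$). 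Finally $s=2$ is excluded because $|v_{h-2}^{2}e|=4$ would put $c\in CH^{2}(X)$, while Totaro's theorem says $cl:CH^{2}(X)\to H^{4}(X)$ is injective, contradicting $c\mapsto 0$. Your proposal as written has a genuine gap at the lifting step and an unaddressed gap at the normalization of $s$; the quotient presentation of $AK(h-2)^*$ plus the degree/$CH^2$-injectivity argument is the missing idea.
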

\begin{proof}
First note $0\not =v_{h-2}e\in K(h-2)^*(X)$
(hence so is $e$).  On the other hand
\[ AK(h-2)^*(X)\cong K(h-2)^*\otimes CH^*(X)/I\]
for some ideal $I$ of $CH^*(X)$.  Therefore there is
an element $c\in CH^*(X)$ which corresponds $v_{h-2}^se$
that is $cl_{\Omega}(c)=v_{h-2}^se$ for
$cl_{\Omega}: \Omega^*(X)\to BP^*(X).$
Since $e\not \in Im(cl_{\Omega})$, we see $s$ must be positive.
The possibility  of
\[ |v_{h-2}^se|=-2(2^{h-2}-1)s+2^h>0\]
is $s=1$ or $s=2$.  Note $|v_{h-2}^2e|=4$.
However it is known by Totaro (Theorem 15.1 in[To2]),
\[ cl : CH^2(X)\to H^4(X)\quad is \ injective.
\]
Hence $s=1$ and $cl_{\Omega}(c)=v_{h-1}e$.
\end{proof}

\begin{cor}
For $X=BSpin(7)$, there is an element $c\in CH^3(X)$
such that $c\not =0\in CH^*(X)/2$, $cl(c)=0$ and
$\rho^*_{\Omega}(c)\not =0\in \Omega^*(BT)^W$.
\end{cor}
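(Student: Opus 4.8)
The plan is to specialize the machinery built up in Section~5 to the case $n=7$, $p=2$, where the Radon--Hurwitz number gives $h=4\ell-1=3$ (since $7=8\cdot 1-1$, so $\ell=1$). Thus $h-2=1$, and the target element is $v_{h-2}e=v_1e$, living in degree $|v_1e|=-2(2^1-1)\cdot 1+2^3=-2+8=6=2\cdot 3$, i.e.\ in the part of $\Omega^*(BSpin(7))$ that contributes to $CH^3$. First I would invoke Corollary~5.4: for $n=7$ the Atiyah--Hirzebruch spectral sequence for $BP^*(BSpin(7))$ is known by [Ko-Ya] (and [Sc-Ya]) to have all nonzero differentials of the form $(*)$, so the hypothesis $(*)$ of Lemma~5.6 is satisfied, and moreover $pe,v_1e\in Im(\rho^*_{BP})\subset BP^*(BT)^W$ while $e$ itself is not.

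Next I would check the hypothesis $(**)$ of Lemma~5.6, namely that $AK(h-2)^*(BG)=AK(1)^*(BSpin(7))\to K(1)^*(BSpin(7))$ is surjective. But this is exactly the case $n=1$ noted in the paragraph preceding Lemma~5.6: by Merkurjev's theorem $AK^*(BG)\cong K^*(BG)$ for all $G$, which forces $AK(1)^*(BG)\cong K(1)^*(BG)$, so $(**)$ holds automatically for $n=1$. With $h=3\ge 3$ and $(*)$, $(**)$ both verified, Lemma~5.6 applies and yields $v_{h-2}e=v_1e\in Im(\rho^*_\Omega)$, hence an element $c=c_1\in CH^*(X)$ with $\rho^*_\Omega(c_1)=v_1e$ and the properties listed in Lemma~5.1: $c_1\ne 0$ in $\Omega^*(BG)\otimes_{BP^*}\bZ/2\cong CH^*(BG)/2$, and $2c_1=0$ in $CH^*(BG)$ since $j=1>0$.

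It then remains to pin down the degree and the statement $cl(c)=0$. The element $c_1$ has image $v_1e$ under $\rho^*_\Omega$; reading off degrees, $c_1\in\Omega^6(BSpin(7))$, i.e.\ $c_1\in CH^3(BSpin(7))$, so $c\in CH^3(X)$ as claimed. For $cl(c)=0$: the composite $\Omega^*(BG)\xrightarrow{\rho^*_\Omega}BP^*(BT)^W$ followed by reduction $BP^*\to\bZ_{(p)}$ recovers $\rho^*_{CH}$ tensored appropriately, and $v_1e$ reduces to $0$ modulo $(v_1,v_2,\dots)$ since it is divisible by $v_1$; more directly, $c_1$ is $2$-torsion in $CH^*(BG)$, so its image under the cycle map $cl: CH^*(BG)\to H^*(BG)$ lands in the $2$-torsion of $H^*(BG)$ — but one must argue it is actually zero. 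The cleanest route is: $cl(c_1)$ restricts to $\rho^*_H(cl(c_1))=cl_{/Tor}$-image which is controlled by the reduction of $v_1e$, namely $0$; combined with the injectivity of $\rho^*_{H/Tor}$ on $H^*(BG)/Tor$ this shows $cl(c_1)\in Tor$, and then a further degree/torsion argument (or direct appeal to the structure of $H^6(BSpin(7))$ computed in [Ko-Ya]) gives $cl(c_1)=0$. Thus $c$ is a nonzero Griffiths element detected by $\rho^*_\Omega$, establishing Corollary~5.7 and hence Theorem~1.2.

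The main obstacle I anticipate is the verification that $cl(c)=0$ rather than merely $cl(c)\in Tor$ — i.e.\ confirming that the element we produce is genuinely a Griffiths element and not just a torsion class with possibly nonzero cycle-map image. This requires either the explicit computation of $H^*(BSpin(7);\bZ_{(2)})$ and its torsion in the relevant degree (deferred to Section~6, where the case $Spin(7)$ is "written down quite explicitly"), or a clean naturality argument tracing $c$ through the AHss and identifying its cycle-map image with the $d_{2^h-1}$-boundary structure of Lemma~5.3. The degree bookkeeping ($h=3$, $|v_1e|=6$, landing in $CH^3$) and the invocation of Totaro's injectivity of $cl$ on $CH^2$ (used in Lemma~5.6 to rule out $s=2$) are routine, as is the application of Merkurjev's theorem for $(**)$; the one genuinely delicate point is separating "Griffiths" from "torsion," which is why the fully explicit treatment is postponed to Section~6.
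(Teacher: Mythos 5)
Your proposal follows the same line the paper intends: verify hypothesis $(*)$ from [Ko-Ya], verify $(**)$ for $n=h-2=1$ via Merkurjev's theorem, apply the preceding lemma to get $v_1e\in\mathrm{Im}(\rho^*_{\Omega})$, and then invoke the Landweber--Novikov lemma to obtain a $2$-torsion class $c\in CH^3$ with $\rho^*_\Omega(c)=v_1e\ne 0$. You are also right to flag $cl(c)=0$ as the one step not literally contained in those lemmas: they yield only $cl(c)\in Tor$, and the missing observation is simply that $H^6(BSpin(7))$ is torsion-free (the torsion ideal $\bZ_{(2)}[w_4,c_6,w_8]\otimes\bZ/2[w_7]\{w_7\}$ in the explicit description of $\S 6$ begins in degree $7$), or equivalently that the Thom reduction $BP^6(BG)\to H^6(BG)$ must kill $v_1e$ since it sits in Atiyah--Hirzebruch filtration $8>6$.
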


\section{ $Spin(7)$ for $p=2$}

 Let $G$ be a compact
Lie group.  Consider the restriction map
\[ res_{H/p}: H^*(BG;\bZ/p)\to Lim_{V:el.ab.}H^*(BV;\bZ/p)^{W_G(A)}\]
where $W_G(A)=N_G(A)/C_G(A)$ and $V$ ranges in the conjugacy classes of maximal
elementary abelian $p$-groups.  Quillen [Qu2] showed this
$res_{H/p}$ is an $F$-isomorphism (i.e. its kernel and cokernel
are generated by nilpotent elements).  We consider its integral
version
\[ res_{H} : H^*(BG)\to \Pi_{A:ab.}H^*(BA)^{W_G(A)},\]
where $A$ ranges in the conjugacy classes of maximal
abelian subgroups of $G$.

Hereafter this section, we assume $G=Spin(7)$ and $p=2$
and hence $h=3$.
The number of conjugacy classes of the maximal abelian subgroups
of $G$ is two, one is the torus $T$ and the other is
$A'\cong (\bZ/2)^4$ which is not contained in $T$.
The Weyl group is $W_G(A')\cong \la U,GL_3(\bZ/2)\ra\subset GL_4(\bZ/2)$ where $U$ is the maximal unipotent group in
$GL_4(\bZ/2)$.
It is well known
\[ H^*(BG;\bZ/2)\cong H^*(BA';\bZ/2)^{W_G(A')}\]
\[ \cong \bZ/2[w_4,w_6,w_7,w_8],\quad Q_0w_6=w_7\]
where $w_i$  for $i\le 7$ (resp. $i=8$)
are the Stiefel-Whitney class for the representation
induced from $Spin(7)\to SO(7)$
(resp. the spin representation $\Delta$
and hence $w_8=w_8(\Delta)=e$).

Since $H^*(BG)$ has just $2$-torsion by Kono, and hence 
the restriction map $res_{H}$ injects $Tor\subset H^*(BG)$ into $H^*(BA';\bZ/2)^{W_G(A')}$.

Next we consider the integral case.
 Also note $H^*(BG)$ has just $2$-torsion and
\[ (H^*(BG)/Tor)\otimes \bZ/2\cong H(H^*(BG;\bZ/2);Q_0).\]
 Since $Q_0w_i=0$ for $i\not =6$ and $Q_0w_6=w_7$, we have
\[ H(H^*(BG;\bZ/2);Q_0)\cong \bZ/2[w_4,c_6,w_8]\quad c_6=w_6^2.\]
Of course the right hand side ring has no nonzero nilpotent elements.  Hence we 
see that $\rho_H^*$ is surjective
and 
\[H^*(BT)^W\otimes \bZ/2\cong \bZ/2[w_4,c_6,w_8].\]
The integral cohomogy is written as
\[ H^*(BG)\cong \bZ_{(2)}[w_4,c_6,w_8]\otimes(\bZ_{(2)}\{1\}
\oplus \bZ/2[w_7]\{w_7\}).\]
In particular, we note $res_H$ is injective.

Next we consider the Atiyah-Hirzebruch spectral
sequence
\[ E_2^{*,*'}\cong H^*(BG)\otimes BP^*
\Longrightarrow BP^*(BG).\]
Its differentials have forms of $(*)$ in $\S 5$.
Using $Q_1(w_4)=x_7, Q_1(e_7)=c_7$,  $Q_2(w_8)=w_7w_8$
and $Q_3(w_7w_8)=c_7c_8,$
we can compute the spectral sequence
(while it is some what complicated)
\[ grBP^*(BG)\cong BP^*[c_4,c_6,c_8]
\{1,2w_4,2w_8,2w_4w_8,v_1w_8\}\]
\[ \oplus BP^*/(2,v_1,v_2)[c_4,c_6,c_7,c_8]\{c_7\}/(v_3c_7c_8).\]
Hence $BP^*(BG)\otimes_{BP^*}\bZ_{(2)}$
is isomorphic to 
\[ \bZ_{(2)}^*[c_4,c_6,c_8]\{1,2w_4,2w_8,2w_4w_8,v_1w_8\}/(2v_1w_8)\]
\[ \oplus \bZ/2[c_4,c_6,c_7,c_8]\{c_7\}.\]

On the other hand, the Chow ring of $BG$ is given
by Guillot [Gu,Ya]
\[ CH^*(BG)\cong BP^*(BG)\otimes_{BP^*}\bZ_{(2)}\]
\[ \cong \bZ_{(2)}[c_4,c_6,c_8]
\otimes(\bZ_{(2)}\{1,c_2',c_4'.c_6'\}
\oplus \bZ/2\{\xi_3\}\oplus \bZ/2[c_7]\{c_7\})\]
where $cl(c_i)=w_i^2$, $cl(c_2')=2w_4$, $cl(c_4')=2w_8$, $cl(c_6')=2w_4w_8$,  and $cl(\xi_3)=0$, $|\xi_3|=6$.
Note $cl_{\Omega}(\xi_3)=v_1w_8$ in $BP^*(BT)^W$,
and $\xi_3=c$ in Corollary 5.5.
 Hence we have
\[ CH^*(BG)/Tor\cong  \bZ_{(2)}[c_4,c_6,c_8]
\{1,c_2',c_4'.c_6'\}\]
\[ \qquad \qquad \subset 
\bZ_{(2)}[w_4,c_6,w_8]\cong CH^*(BT)^W.\]
In fact the nilpotent ideal in $(CH^*(BG)/(Tor))\otimes \bZ/2$ is generated by $c_2',c_4',c_6'$.

Next we consider the Chow rings version for the restriction map
\[ res_{CH} : CH^*(BG)\to \Pi_{A:ab.}CH^*(BA)^{W_G(A)}.\]
Recall $CH^*(BA')\cong \bZ_{(2)}[y_1,...,y_4]$ with $cl(y_i)=x_i^2$.
Hence we have
\[ (CH^*(BA')/2)^{W_G(A')}\cong \bZ/2[c_4,c_6,c_7,c_8].\]
Since $Tor$ is just $2$-torsion, we have
\begin{lemma} For the torsion ideal $Tor$ in $CH^*(BG)$
\[  res_{CH}(Tor) \cong \bZ/2[c_4,c_6,c_8,c_7]\{c_7\}    
       \subset CH^*(BA').\]
\end{lemma}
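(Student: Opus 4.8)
The plan is to feed two ingredients from the explicit description of $CH^*(BG)$ recalled above into the fact that $res_{CH}$ is a ring homomorphism which, once its target is placed inside $\Pi_{A}H^*(BA)^{W_G(A)}$, coincides with $res_H\circ cl$ and hence annihilates every Griffiths element. Since the two conjugacy classes of maximal abelian subgroups are $T$ and $A'$ and $CH^*(BT)$ is torsion-free, the $T$-component of $res_{CH}(Tor)$ vanishes, so only the $BA'$-component is at issue.

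First I would record that the torsion ideal of $CH^*(BG)$ is generated by the two classes $\xi_3$ and $c_7$. From the decomposition
$$CH^*(BG)\cong\bZ_{(2)}[c_4,c_6,c_8]\otimes\bigl(\bZ_{(2)}\{1,c_2',c_4',c_6'\}\oplus\bZ/2\{\xi_3\}\oplus\bZ/2[c_7]\{c_7\}\bigr)$$
together with $CH^*(BG)/Tor\cong\bZ_{(2)}[c_4,c_6,c_8]\{1,c_2',c_4',c_6'\}$, the torsion subgroup of $CH^*(BG)$ has $\bZ/2$-basis the monomials $c_4^ac_6^bc_8^c\xi_3$ and the monomials $c_4^ac_6^bc_7^dc_8^c$ with $d\ge 1$. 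The torsion subgroup of a commutative ring is an ideal, and each of these basis monomials lies in the ideal $(\xi_3)$, respectively $(c_7)$; hence $Tor=(\xi_3,c_7)$, and therefore $res_{CH}(Tor)$ is the ideal generated by $res_{CH}(\xi_3)$ and $res_{CH}(c_7)$.

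Next I would evaluate the two generators. One has $res_{CH}(\xi_3)=0$, since $\xi_3$ is a Griffiths element ($cl(\xi_3)=0$) and, with the target viewed inside $\Pi_A H^*(BA)^{W_G(A)}$, the map $res_{CH}$ equals $res_H\circ cl$. On the other hand $res_{CH}(c_7)$ is the polynomial generator $c_7$ of $(CH^*(BA')/2)^{W_G(A')}\cong\bZ/2[c_4,c_6,c_7,c_8]$: indeed $cl(c_7)=w_7^2$ and $w_7\in H^*(BG;\bZ/2)$ restricts to the invariant $w_7\in H^*(BA';\bZ/2)^{W_G(A')}$, whereas $c_7\in(CH^*(BA')/2)^{W_G(A')}$ maps to $w_7^2$ under the injection into $H^*(BA';\bZ/2)$; the same bookkeeping sends $c_4,c_6,c_8$ to the generators of the same name. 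Since $c_7$ is a nonzerodivisor in the polynomial ring $\bZ/2[c_4,c_6,c_7,c_8]$, the previous paragraph yields
$$res_{CH}(Tor)=\bigl(res_{CH}(\xi_3),\,res_{CH}(c_7)\bigr)=(c_7)=\bZ/2[c_4,c_6,c_8,c_7]\{c_7\},$$
the asserted identity.

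Everything after the first step is formal, so the work is concentrated in showing that $Tor$ really is generated as an ideal by $\xi_3$ and $c_7$ alone — equivalently, that no extra $\bZ/2$-summand of torsion hides in some degree. This is dictated by the displayed module decomposition and the stated shape of $CH^*(BG)/Tor$, but it is the point I would write out with care; the only other thing to keep straight is that $res_{CH}$ sends the Chern-type generators $c_4,c_6,c_7,c_8$ of $CH^*(BG)$ to the generators of the same name in $(CH^*(BA')/2)^{W_G(A')}$, which is verified on cycle classes.
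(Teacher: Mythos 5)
Your argument is correct and agrees with what the paper intends: the paper states this lemma right after displaying the module decomposition of $CH^*(BG)$ and the computation of $(CH^*(BA')/2)^{W_G(A')}$, with only the remark ``Since $Tor$ is just $2$-torsion,'' so the proof is essentially left to the reader. You have supplied exactly the expected filling-in: the $T$-component dies because $CH^*(BT)$ is torsion-free, $Tor=(\xi_3,c_7)$ from the displayed decomposition, $\xi_3$ dies because it is a Griffiths element (using the paper's observation that $Ker(cl)\subset Ker(res_{CH})$, since the target $CH^*(BA')^{W_G(A')}$ is embedded in $H^*(BA')^{W_G(A')}$), and the $\bZ/2[c_4,c_6,c_8,c_7]\{c_7\}$ block maps injectively because $cl$ carries it onto $\bZ/2[w_4^2,w_6^2,w_7^2,w_8^2]\{w_7^2\}\subset\bZ/2[w_4,w_6,w_7,w_8]\cong H^*(BA';\bZ/2)^{W_G(A')}$.
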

Thus we see that
$ Ker(res_{CH})\cong \bZ/2[c_4,c_6,c_8]\{\xi_3\},$
which is the ideal of Griffiths elements.
We write down the above results.
\begin{thm}  Let $(G,p)=(Spin(7),2)$. Let $Grif$ 
be the ideal generated by Griffiths elements and
$D=\bZ_{(2)}[c_4,c_6,c_8]$.  Then we have
\[ CH^*(BG)/Tor\cong D\{1,2w_4,2w_8,2w_4w_8\}\]
\[ \qquad \qquad \qquad \qquad \subset
D\{1,w_4,w_8,w_4w_8\}\cong CH^*(BT)^W,\ \  with \ w_i^2=c_i,\]
\[Tor/Grif\cong D/2[c_7]\{c_7\},\quad Grif\cong D/2\{\xi_3\}.\]
\end{thm}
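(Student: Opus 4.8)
The plan is to assemble the three displayed isomorphisms from the description of $CH^*(BG)$ recorded above (obtained from Guillot [Gu], or equivalently from the Atiyah--Hirzebruch spectral sequence for $BP^*(BG)$ followed by $-\otimes_{BP^*}\bZ_{(2)}$), from the values of the cycle map $cl$ on its generators, and from the computation $(H^*(BT)^W)\otimes\bZ/2\cong\bZ/2[w_4,c_6,w_8]$.

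First I would identify the target ring. Since $CH^*(BT)\cong H^*(BT)$ is a polynomial ring, it is torsion free, hence so is $CH^*(BT)^W\cong H^*(BT)^W$; as its reduction mod $2$ is $\bZ/2[w_4,c_6,w_8]$ and $c_4=w_4^2$, $c_8=w_8^2$, the invariant ring lifts to $CH^*(BT)^W\cong\bZ_{(2)}[w_4,c_6,w_8]$, which is free over $D=\bZ_{(2)}[c_4,c_6,c_8]$ on $\{1,w_4,w_8,w_4w_8\}$.

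Next I would split off $Tor$ and compute $\Ker(cl)$. In the $D$-module decomposition
\[ CH^*(BG)\cong D\{1,c_2',c_4',c_6'\}\ \oplus\ D/2\{\xi_3\}\ \oplus\ D/2[c_7]\{c_7\}, \]
the first summand is $\bZ_{(2)}$-free while the other two are annihilated by $2$, so $Tor=D/2\{\xi_3\}\oplus D/2[c_7]\{c_7\}$ and $CH^*(BG)/Tor\cong D\{1,c_2',c_4',c_6'\}$. Using $H^*(BG)\cong\bZ_{(2)}[w_4,c_6,w_8]\otimes(\bZ_{(2)}\{1\}\oplus\bZ/2[w_7]\{w_7\})$ together with $cl(c_i)=w_i^2$, $cl(c_2')=2w_4$, $cl(c_4')=2w_8$, $cl(c_6')=2w_4w_8$, $cl(c_7)=w_7^2$ and $cl(\xi_3)=0$, one checks that $cl$ embeds $D\{1,c_2',c_4',c_6'\}$ into the torsion-free part $\bZ_{(2)}\{1\}\otimes\bZ_{(2)}[w_4,c_6,w_8]$ and embeds $D/2[c_7]\{c_7\}$ into the torsion part $\bZ/2[w_7]\{w_7\}\otimes\bZ_{(2)}[w_4,c_6,w_8]$, these two parts meeting only in $0$; hence $\Ker(cl)=D/2\{\xi_3\}$. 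Since $cl$ is a ring homomorphism this kernel is an ideal, so $Grif=D/2\{\xi_3\}$ and $Tor/Grif\cong D/2[c_7]\{c_7\}$. Finally, writing $\rho^*_{CH}=\rho^*_{H/Tor}\circ cl_{/Tor}$ with $\rho^*_{H/Tor}$ injective and $\Ker(cl)\subset Tor$, the map $\rho^*_{CH}$ is injective on $CH^*(BG)/Tor$ with image $D\{1,2w_4,2w_8,2w_4w_8\}\subset D\{1,w_4,w_8,w_4w_8\}\cong CH^*(BT)^W$, which is the first isomorphism.

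The only substantial input is the determination of $CH^*(BG)$ itself: running the Atiyah--Hirzebruch spectral sequence for $BP^*(BSpin(7))$ with the differentials forced by the Milnor operations $Q_1,Q_2,Q_3$ on $w_4$, $w_8$, $w_7w_8$, verifying that there are no further differentials or additive extensions, and tensoring down to $CH^*$ --- or, more economically, citing Guillot's theorem. Given that, everything above is bookkeeping, separating the $\bZ_{(2)}$-free part, the $2$-torsion part and $\Ker(cl)$; the one point needing a word of care is the integrality claim $CH^*(BT)^W\cong\bZ_{(2)}[w_4,c_6,w_8]$, which relies on $CH^*(BT)$ being polynomial so that the mod-$2$ invariants lift.
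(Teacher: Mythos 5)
Your proof is correct and follows essentially the same route as the paper: start from the explicit description of $CH^*(BSpin(7))$ (Guillot, or the AHss for $BP^*$ reduced mod the ideal $(v_1,v_2,\dots)$), separate the $\bZ_{(2)}$-free summand $D\{1,c_2',c_4',c_6'\}$ from the $2$-torsion, use the cycle-map values $cl(c_i')=2w_i$ (etc.) and $cl(\xi_3)=0$ to identify $CH^*(BG)/Tor$, $Tor/Grif$, and $Grif=\Ker(cl)$, and embed $CH^*(BG)/Tor$ into $CH^*(BT)^W\cong\bZ_{(2)}[w_4,c_6,w_8]=D\{1,w_4,w_8,w_4w_8\}$ via $\rho^*_{CH}=\rho^*_{H/Tor}\circ cl_{/Tor}$. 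The only stylistic difference is that you compute $Grif$ directly as $\Ker(cl)$ whereas the paper packages this through Lemma~6.1 on $res_{CH}(Tor)$ (which is injective on $D/2[c_7]\{c_7\}$ and kills $\xi_3$); since $Grif\subset\Ker(res_{CH})$ always holds, these are equivalent here.
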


There is only one nonzero element $\xi_3$ in $CH^3(BG)/2$, and this $\xi_3=c$ in Corollary 5.5 in the preceding section.
Thus we see Theorem 1.2 in the introduction.
\begin{cor}
Take an element $\xi\in \Omega^*(BG)$ such that
$\xi=\xi_3$ in $\Omega^*(BG)\otimes _{BP^*}\bZ_{(2)}
\cong CH^*(BG)$.  We also identify $c_i\in \Omega^*(BG)$. Then
\[ \bZ/2[c_4.c_6,c_8]\{\xi\}\subset \Omega^*(BT)^W/2.\]
\end{cor}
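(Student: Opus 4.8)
The plan is to prove that $\rho^*_{\Omega}$, reduced modulo $2$, is injective on the sub-$\bZ/2[c_4,c_6,c_8]$-module generated by $\xi$ inside $\Omega^*(BG)/2\cong CH^*(BG)/2$, its image then automatically lying in $\Omega^*(BT)^W/2\cong BP^*(BT)^W/2$ since restrictions from $BG$ are $W$-invariant. By Theorem 6.2 the Griffiths part is $Grif\cong D/2\{\xi_3\}$, free over $D/2=\bZ/2[c_4,c_6,c_8]$ on $\xi_3$; since the reduction $\Omega^*(BG)/2\to CH^*(BG)/2$ sends $c_4^ac_6^bc_8^d\xi$ to $c_4^ac_6^bc_8^d\xi_3$, these classes are already $\bZ/2$-linearly independent in $\Omega^*(BG)/2$, so it suffices to show that $\rho^*_{\Omega}$ maps them to a $\bZ/2$-linearly independent family in $BP^*(BT)$.

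The tool is a leading-term argument for the Atiyah--Hirzebruch filtration of $BP^*(BT)$, whose associated graded $gr\,BP^*(BT)\cong BP^*\otimes H^*(BT)$ is a polynomial ring over $\bZ_{(2)}$, hence an integral domain. First, $\rho^*_{\Omega}(c_i)\equiv cl(c_i)|_{BT}=w_i^2 \pmod{(v_1,v_2,\dots)}$, so its leading term is $1\otimes w_i^2$; and since $cl(\xi_3)=0$ we have $\rho^*_{\Omega}(\xi)\in(v_1,v_2,\dots)\,BP^*(BT)$, so by Corollary 5.5 and the identification $\xi_3=c$, $\rho^*_{\Omega}(\xi)=v_1w_8$, its leading term is $v_1\otimes w_8$, in filtration degree $8$. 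Because $\rho^*_{\Omega}$ is multiplicative and $gr\,BP^*(BT)$ is a domain, the leading term of $\rho^*_{\Omega}(c_4^ac_6^bc_8^d\xi)$ is the product $v_1\otimes w_4^{2a}c_6^bw_8^{2d+1}$, in filtration degree $8a+12b+16d+8$. Using $H^*(BT)^W\cong\bZ_{(2)}[w_4,c_6,w_8]$ (with $w_4^2=c_4$, $w_8^2=c_8$) from \S6, these leading terms reduce mod $2$ to pairwise distinct monomials in $\bZ/2[v_1,v_2,\dots,w_4,c_6,w_8]\subset gr\,BP^*(BT)/2$, so they are $\bZ/2$-linearly independent. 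Hence, for any nonzero $x=\sum_{a,b,d}\lambda_{a,b,d}\,c_4^ac_6^bc_8^d\xi$ with $\lambda_{a,b,d}\in\bZ/2$, taking the smallest filtration degree $n_0$ occurring among the monomials with $\lambda_{a,b,d}\neq0$, the image of $\rho^*_{\Omega}(x)$ in $gr^{n_0}(BP^*(BT)/2)$ is a nonzero $\bZ/2$-combination of those leading terms; thus $\rho^*_{\Omega}(x)\neq0$ in $BP^*(BT)/2$, hence in $BP^*(BT)^W/2$, and the asserted inclusion follows.

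The step I expect to need the most care is pinning down the leading terms: that $\rho^*_{\Omega}(\xi)$ is $v_1w_8$ with leading term in filtration exactly $8$ and no lower contribution --- which rests on $cl(\xi_3)=0$ together with the explicit output $\rho^*_{\Omega}(\xi)=v_1w_8$ of the Morava $K$-theory computation behind Corollary 5.5 --- and that the leading term of a product is the product of the leading terms, which uses the absence of zero divisors in $gr\,BP^*(BT)$. Everything else is routine bookkeeping with the Atiyah--Hirzebruch filtration and with the polynomial structure of $H^*(BT)^W$. If one wants the statement for an arbitrary lift $\xi$ of $\xi_3$, one invokes Theorem 6.2 once more: no codimension-$4$ Chow class of $BG$ restricts to $w_8$ mod $2$ on $BT$ (its image in degree $8$ is $\bZ_{(2)}\{c_4,2w_8\}$), so changing the lift alters the filtration-$8$ leading term of $\rho^*_{\Omega}(\xi)$ only within $v_1\otimes H^8(BT)^W$ and cannot cancel its $v_1w_8$ component, leaving the independence argument intact.
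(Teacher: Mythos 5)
Your argument is correct and fills in the leading‑term/associated‑graded computation that the paper leaves implicit: $\rho^*_\Omega(\xi)$ has filtration-$8$ leading term $v_1\otimes w_8$, $\rho^*_\Omega(c_i)$ has leading term $1\otimes w_i^2$, and since $gr\,BP^*(BT)/2$ is a polynomial $\bZ/2$-algebra containing $\bZ/2[v_1,v_2,\dots,w_4,c_6,w_8]$ as a polynomial subring, the products $v_1 w_4^{2a}c_6^b w_8^{2d+1}$ are $\bZ/2$-independent, which gives the injectivity. This is essentially the intended argument; your closing remark on lift‑independence is a welcome addition (the paper is silent on it), though note that with a general lift the filtration‑$8$ leading term becomes $v_1(w_8+\alpha w_4^2)$ rather than the pure monomial $v_1 w_8$, so one should phrase the final independence step as multiplying a fixed nonzero element of a domain by distinct monomials rather than as a "pure monomial" count.
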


\begin{thm}
Let $G=Spin(7)$ and $p=2$.  We consider a (non natural) map
\[ CH^*(BG)\subset \Omega^*(BG)\to \Omega^*(BT)\]
(here the first inclusion is just one section of the projection
$\Omega^*(BG)\to CH^*(BG)$).
 Then the following restriction map is injective
\[res :  CH^*(BG)\subset CH^*(BA')\times \Omega ^*(BT).\]
\end{thm}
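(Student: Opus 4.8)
The plan is to decompose $CH^*(BG)$ according to the structure theorem (Theorem 6.2) and check injectivity piece by piece against the two target factors $CH^*(BA')$ and $\Omega^*(BT)$. Recall that $CH^*(BG)$ is built from three summands: the torsion-free part $D\{1,2w_4,2w_8,2w_4w_8\}$, the "visible torsion" part $Tor/Grif \cong D/2[c_7]\{c_7\}$, and the Griffiths part $Grif \cong D/2\{\xi_3\}$, where $D = \bZ_{(2)}[c_4,c_6,c_8]$. The map $res_{CH}$ alone (Lemma 6.1 and the discussion after it) already injects the first two summands: the torsion-free part injects into $CH^*(BT)^W$ via $\rho_{CH}^*$, and $res_{CH}(Tor) \cong \bZ/2[c_4,c_6,c_7,c_8]\{c_7\} \subset CH^*(BA')$ injects the $c_7$-part. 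So the only obstruction to injectivity of $res_{CH}$ is precisely $Ker(res_{CH}) = D/2\{\xi_3\}$, the Griffiths ideal.

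The key step is therefore to show that the Griffiths summand $\bZ/2[c_4,c_6,c_8]\{\xi_3\}$ maps injectively into $\Omega^*(BT)$ under the composite $CH^*(BG) \hookrightarrow \Omega^*(BG) \to \Omega^*(BT)$. By Corollary 6.3, we have $\bZ/2[c_4,c_6,c_8]\{\xi\} \subset \Omega^*(BT)^W/2$, where $\xi$ lifts $\xi_3$ and $cl_\Omega(\xi) = v_1 w_8$ in $BP^*(BT)^W$. Concretely, the image of a monomial $c_4^a c_6^b c_8^c \xi$ is $v_1 \cdot w_4^{2a} c_6^{2b} w_8^{2c+1}$ (up to the identification $c_i = w_i^2$ in $CH^*(BT)^W$ and the $v_1$-twist), and these monomials are $\bZ/2$-linearly independent in $BP^*(BT)^W \cong \Omega^*(BT)^W$ since $\bZ/2[w_4,c_6,w_8]$ is a polynomial ring and multiplication by $v_1$ is injective on the relevant $BP^*$-free part. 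Hence the Griffiths summand alone injects into $\Omega^*(BT)$.

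The remaining step is to combine the two partial injectivity statements into injectivity of the total map $res$. Suppose $z \in CH^*(BG)$ maps to zero in both $CH^*(BA')$ and $\Omega^*(BT)$. Write $z = z_0 + z_1 + z_2$ according to the three summands (choosing the section so that it is compatible with the direct-sum decomposition). From $res_{CH}(z) = 0$ in $\Pi_A CH^*(BA)^{W_G(A)}$, which factors through the $CH^*(BA')$ and $CH^*(BT)$ components, we get $z_0 = z_1 = 0$ by the injectivity of $res_{CH}$ on those summands (and vanishing in $CH^*(BT)^W$ kills $z_0$, vanishing in $CH^*(BA')$ kills $z_1$). Then $z = z_2 \in Grif$, and vanishing in $\Omega^*(BT)$ forces $z_2 = 0$ by the previous paragraph.

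The main obstacle is the bookkeeping around the word "(non natural)" in the statement: the inclusion $CH^*(BG) \subset \Omega^*(BG)$ is merely a choice of $BP^*$-module section of $\Omega^*(BG) \to CH^*(BG)$, and one must ensure this section can be chosen compatibly with the direct-sum decomposition of Theorem 6.2 so that the summand-by-summand argument goes through — equivalently, that the Griffiths lift $\xi$ of Corollary 6.3 can be taken as part of a coherent splitting. Granting that (which follows from the explicit form of $grBP^*(BG)$ computed in $\S 6$, where $v_1 w_8$ appears as a genuine $BP^*$-module generator), the rest is the linear-independence check in the polynomial ring $\bZ/2[w_4,c_6,w_8]$ twisted by $v_1$, which is routine. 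A minor point to verify is that the $c_7$-torsion summand genuinely dies in $\Omega^*(BT)$ (it does — it already dies in $CH^*(BT)^W$, being torsion, so it contributes nothing to the $\Omega^*(BT)$ factor and must be caught by $CH^*(BA')$, which it is by Lemma 6.1).
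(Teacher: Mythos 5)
Your proof is correct and follows exactly the route the paper intends: the paper states Theorem 6.4 without a separate proof, as an assembly of Theorem 6.2 (the decomposition of $CH^*(BG)$ into free, $Tor/Grif$, and $Grif$ pieces), Lemma 6.1 (the $c_7$-torsion is caught by $CH^*(BA')$), and Corollary 6.3 (the Griffiths ideal $D/2\{\xi_3\}$ injects into $\Omega^*(BT)^W/2$), which is precisely what you do. The only nitpicks are cosmetic: the image of $c_4^a c_6^b c_8^c \xi$ should read $v_1 w_4^{2a} c_6^{b} w_8^{2c+1}$ (not $c_6^{2b}$), and the step "vanishing in $\Omega^*(BT)$ kills $z_0$" implicitly uses that the composite $CH^*(BG)\to\Omega^*(BT)\to CH^*(BT)$ agrees with $\rho^*_{CH}$, which is worth stating once.
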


\begin{cor}
Let $J\subset BP^*$ is the ideal $J=(2^2,2v_1,v_1^2,v_2,...)$
so that $BP^*/J \cong \bZ/4\{1\}\oplus \bZ/2\{v_1\}.$  Let $D=\bZ_{(2)}[c_4,c_6,c_8]$.  Then
\[ \Omega^*(BG)/J
\cong (\Omega^*/J\otimes D\{1,c_2',c_4',c_6',\xi_3\}/(2\xi_3=v_1c_4'))
 \oplus D/2[c_7]\{c_7\}.\]
\end{cor}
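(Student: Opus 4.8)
The plan is to obtain the formula as a base change: $\Omega^*(BG)/J=\Omega^*(BG)\otimes_{BP^*}BP^*/J$, so it suffices to apply the functor $(-)\otimes_{BP^*}BP^*/J$ to the $BP^*$-module $\Omega^*(BG)$ (which for $G=Spin(7)$ is isomorphic to $BP^*(BG)$, computed in $\S 6$) and simplify. The coefficient ring is elementary: $BP^*/J=\bZ_{(2)}[v_1]/(4,2v_1,v_1^2)\cong\bZ/4\{1\}\oplus\bZ/2\{v_1\}$, as in the statement; the point, in contrast with the quotient $\bZ_{(2)}=BP^*/(v_1,v_2,\ldots)$ that produces $CH^*(BG)$, is that it retains $v_1$ (to the first power).

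Next I would read off the $BP^*$-module structure of $\Omega^*(BG)$ from $\S 6$, where the associated graded is
\[ grBP^*(BG)\cong BP^*[c_4,c_6,c_8]\{1,2w_4,2w_8,2w_4w_8,v_1w_8\} \]
\[ \oplus\, BP^*/(2,v_1,v_2)[c_4,c_6,c_7,c_8]\{c_7\}/(v_3c_7c_8). \]
In the notation there ($D=\bZ_{(2)}[c_4,c_6,c_8]$, and $c_i,c_2',c_4',c_6',\xi_3$ as in Theorem 6.2 and Corollary 6.4, so in particular $cl_{\Omega}(\xi_3)=v_1w_8$), resolving the extension reflecting $2\cdot v_1w_8=v_1\cdot 2w_8$ exhibits $\Omega^*(BG)$ as the direct sum of a ``real'' part $(BP^*\otimes D)\{1,c_2',c_4',c_6',\xi_3\}$ subject to the relation $2\xi_3=v_1c_4'$ (possibly up to correction terms whose $BP^*$-coefficients lie in $(v_1^2,v_2,v_3,\ldots)\subset J$) and a $c_7$-torsion part $BP^*/(2,v_1,v_2)[c_4,c_6,c_7,c_8]\{c_7\}/(v_3c_7c_8)$. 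This is consistent with Theorem 6.2: reducing modulo $(v_1,v_2,\ldots)$ turns $2\xi_3=v_1c_4'$ into $2\xi_3=0$, so $\xi_3$ contributes precisely the summand $Grif\cong D/2\{\xi_3\}$ there, whereas modulo $J$ the factor $v_1$ is retained.

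I would then apply $(-)\otimes_{BP^*}BP^*/J$, which is right exact, to this direct sum. On the torsion part one gets $BP^*/(2,v_1,v_2)\otimes_{BP^*}BP^*/J=BP^*/\big((2,v_1,v_2)+J\big)=BP^*/(2,v_1,v_2,v_3,\ldots)=\bZ/2$; the relation $v_3c_7c_8$ becomes vacuous, and the summand collapses to $\bZ/2[c_4,c_6,c_7,c_8]\{c_7\}=D/2[c_7]\{c_7\}$. On the real part, $(BP^*\otimes D)\{1,c_2',c_4',c_6',\xi_3\}/(2\xi_3-v_1c_4'-\cdots)$ base-changes to $(BP^*/J\otimes D)\{1,c_2',c_4',c_6',\xi_3\}/(2\xi_3-v_1c_4')$, since every correction term and all of $(v_1^2,v_2,v_3,\ldots)$ die; note that $v_1c_4'\neq0$ in this quotient (it lies in the $\bZ/2\{v_1\}$-part of $\Omega^*/J$), so $\xi_3$ now has order $4$ and the surviving relation is genuinely $2\xi_3=v_1c_4'$, not $2\xi_3=0$. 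Assembling the two summands, and recalling $\Omega^*=BP^*$, gives the asserted isomorphism.

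The main obstacle I expect is the structural input of the second step: having the genuine $BP^*$-module structure of $\Omega^*(BG)$ precisely enough, and in particular being sure that, apart from $2\xi_3=v_1c_4'$, no further hidden relation survives modulo $J$. Because $BP^*/J$ remembers $v_1$ to the first power, one cannot simply appeal to the $\bZ_{(2)}$-reduction $CH^*(BG)$; one must work through the $\S 6$ spectral sequence to exclude any other $v_1$-linear relation — for instance to confirm that the $c_7$-torsion classes stay $2$-torsion modulo $J$, i.e.\ that $2c_7$ is not $v_1$ times a class of degree $16$ — and to confirm that the ``real'' and $c_7$-torsion parts really split off as $BP^*$-module summands. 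Granted this input, which is essentially contained in $\S 6$ and in Guillot's computation of $CH^*(BSpin(7))$, the corollary follows by the routine base change above.
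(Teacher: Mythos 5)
The paper states this corollary with no proof, so there is nothing to compare against line by line; I can only assess your argument on its own terms. The base-change strategy $\Omega^*(BG)/J\cong\Omega^*(BG)\otimes_{BP^*}BP^*/J$, fed by the $BP^*$-module presentation extracted from the $\S 6$ AHSS computation, is surely the intended route, and your degree count controlling the filtration-jump corrections to $2\xi_3=v_1c_4'$ is sound: any correction has the form $v^I y$ with cohomological degree $|y|>16$, hence $|v^I|<-10$, and every monomial $v^I$ of degree $<-10$ lies in $J=(4,2v_1,v_1^2,v_2,v_3,\dots)$.

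Where you are right to be uneasy, and where the argument is not yet complete, is the assertion that the $c_7$-torsion summand passes through unchanged. The same degree count does not close that door: $|2c_7|=14$, and in the next nonzero filtration ($16$) of $BP^{14}(BG)$ there are the elements $v_1c_8$, $v_1c_4^2$, $v_1c_4c_4'$, with $v_1\notin J$, so a hidden extension $2c_7=v_1c_8+\cdots$ (which would change the right-hand side of the corollary) is not excluded by grading alone. Your remark that this input is ``essentially contained in $\S 6$ and in Guillot's computation of $CH^*(BSpin(7))$'' is too optimistic: Guillot's result is the Chow ring, i.e.\ the $\bZ_{(2)}$-reduction of $\Omega^*(BG)$, and a relation $2c_7=v_1c_8$ becomes invisible there. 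What the proof actually needs is the $BP^*$-module extension data for $BP^*(BSpin(7))$ from [Ko-Ya], or a direct argument (for instance, $c_7$ is the odd Conner--Floyd Chern class of a complexified real $7$-dimensional representation, and the universal identity $\bar c_7=c_7$ writes $2c_7$ as a $BP^{<0}$-linear combination of lower Chern classes, whose coefficients one would then have to check land in $J$). You reduce the corollary to exactly the right unverified structural input, but that input is not routine; it should either be cited precisely from [Ko-Ya] or supplied.
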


\section{The exceptional group $F_4$, $p=3$.}

In this section, we assume $(G,p)=(F_4,3)$.
(However similar arguments also work for
$(G,p)=(E_6,3),(E_7,3)$ and $(E_8,5)$, [Ka-Ya].)
Toda computed the $mod(3)$ cohomology of $BF_4$.
(For details see [Tod].)
\[ H^*(BG;\bZ/3)\cong C\otimes D,\quad where\]
\[   C=F\{1,x_{20},x_{20}^2\}\oplus \bZ/3[x_{26}]\otimes \Lambda(x_9)\otimes \{1,x_{20},x_{21},x_{26}\}\]
\[  D=\bZ_{(3)}[x_{36},x_{48}],\quad F=\bZ_{(3)}[x_4,x_8]. \]

Using that $H^*(BG)$ has no higher $3$-torsion and
$Q_0x_8=x_9$, $Q_0x_{20}=x_{21}$, $Q_0x_{25}=x_{26}$,
we can compute
\[ H^*(BG)\cong D\otimes C'\quad where \]
\[ C'/Tor \cong Z_{(3)}\{x_4\}\oplus E,\quad where\ E=F\{ab|a,b\in \{x_4,x_8,x_{20}\}\}\]
\[ C'\supset Tor\cong \bZ/3[x_{26}]\{x_{26},x_{21}
,x_9, x_9x_{21}\}.\]

Note $x_{26}=Q_2Q_1(x_4)$ in Theorem 2.2 and 
\[ H^*(BT;\bZ/p)^W\cong H^{even}(BG;\bZ/3)/(Q_2Q_1x_4)
\cong D\otimes F\{1,x_{20},x_{20}^2\}.\]
Hence we see
\[ (H^*(BG)/Tor)\otimes \bZ/3\cong
D/3\otimes (\bZ/3\{1,x_4\}\oplus E)\subset
D/3\otimes F\{1,x_{20},x_{20}^2\}.\]
Thus from Lemma 2.3,  we see $\rho_H^*$ is surjective
and 
\[ H^*(BT)^W\cong H^*(BG)/Tor\cong D\otimes(\bZ_{(3)}\{1,x_4\}\oplus E).\]

%{\bf Remark.}
%From Toda (relation $r_{15}$ in page 98 in [Tod]) it is known
%\[ r_{15}=y_{20}^3+x_8^2x_4y_{20}^2-x_{48}x_{4}^3-x_{36}x_8^3=0.\]
%Hence $(H^*(BG)/Tor)\otimes \bZ/3\subset 
% F/3\{1,x_{20},x_{20}^2\}$ has no nilpotent
%element.
%\bf Remark.} Elements  $x_8,x_{20}\in (H^*(BT)/3)^W$
%can not be  extended  as elements in 
%the integral invariants $H^*(BG)^W$.

Next we consider the Atiyah-Hirzebruch spectral
sequence [Ko-Ya]
\[ E_2^{*,*'}\cong H^*(BG)\otimes BP^*
\Longrightarrow BP^*(BG).\]
Its differentials have forms of $(*)$ in $\S 5$.
Using $Q_1(x_4)=x_9, Q_1(x_{20})=x_{25}, Q_1(x_{21})=x_{26}$ and $Q_2x_{9}=x_{26}$ we can compute
\[ grBP^*(BG)\cong D\otimes(BP^*\otimes(\bZ_{(3)}\{1,3x_4\}\oplus E)
 \oplus BP^*/(3,v_1,v_2)[x_{26}]\{x_{26}\}).\]
Hence we have
\[BP^*(BG)\otimes_{BP^*}\bZ_{(3)}\cong D\otimes
    (\bZ_{(3)}\{1,3x_4\}\oplus E\oplus \bZ/3[x_{26}]\{x_{26}\}).\]
\begin{prop}
Let $(G,p)=(F_4,3)$ and $Tor \supset Grif$ be the ideal generated by Griffiths elements.  Then we have
\[ CH^*(BG)/Tor\subset   D\otimes (\bZ_{(3)}\{1,3x_4\}\oplus E)\subset H^*(BG)/Tor,\]
\[ Tor/Grif \cong D\otimes \bZ/3[x_{26}]\{x_{26}\}.\]
\end{prop}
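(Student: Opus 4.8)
The plan is to imitate the argument of $\S 6$ for $Spin(7)$, feeding in the computations of $H^*(BG)$ and of $BP^*(BG)\otimes_{BP^*}\bZ_{(3)}$ already obtained above. Recall $CH^*(BG)\cong \Omega^*(BG)\otimes_{BP^*}\bZ_{(3)}$, and that the realization $t_{\bC}$ on $MGL$ gives $cl_{\Omega}:\Omega^*(BG)\to BP^*(BG)$, hence a ring map $\overline{cl}_{\Omega}:CH^*(BG)\to BP^*(BG)\otimes_{BP^*}\bZ_{(3)}$ whose composition with the Thom reduction $BP^*(-)\to H^*(-;\bZ_{(3)})$ is the cycle map $cl$. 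Since $\rho^*_{CH}\otimes\bQ$ and $\rho^*_H\otimes\bQ$ are isomorphisms onto $H^*(BT)^W\otimes\bQ$, the map $cl\otimes\bQ$ is an isomorphism, so $Grif=\Ker(cl)$ is torsion; as $Tor$ is the ideal generated by the torsion elements, $Tor$ is exactly the torsion submodule of $CH^*(BG)$ and $Grif\subseteq Tor$. In particular $cl$ induces an injection $CH^*(BG)/Tor\hookrightarrow H^*(BG)/Tor$.

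Next I would analyse the Thom map on $BP^*(BG)\otimes_{BP^*}\bZ_{(3)}\cong D\otimes(\bZ_{(3)}\{1,3x_4\}\oplus E\oplus \bZ/3[x_{26}]\{x_{26}\})$. Since $x_{26}=Q_2Q_1x_4$ satisfies $Q_0x_{26}=Q_1x_{26}=Q_2x_{26}=0$, it is a permanent cycle in the $BP$-Atiyah--Hirzebruch spectral sequence; reading off the edge behaviour one checks that this Thom map is injective, taking the free summand injectively with image $D\otimes(\bZ_{(3)}\{1,3x_4\}\oplus E)\subset H^*(BG)/Tor$ (note it is $3x_4$, not $x_4$, that is hit --- the $Q_1x_4\neq0$ phenomenon of Theorem 3.3) and the torsion summand injectively into $Tor\subset H^*(BG)$ by $x_{26}\mapsto x_{26}$. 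Hence the image of $cl$ in $H^*(BG)/Tor$ lies in $D\otimes(\bZ_{(3)}\{1,3x_4\}\oplus E)$, and identifying $CH^*(BG)/Tor$ with this image gives
\[ CH^*(BG)/Tor\subset D\otimes(\bZ_{(3)}\{1,3x_4\}\oplus E)\subset H^*(BG)/Tor.\]

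For the second formula, the injectivity of the Thom map just established gives $Grif=\Ker(cl)=\Ker(\overline{cl}_{\Omega})$, so $\overline{cl}_{\Omega}$ induces an injection $CH^*(BG)/Grif\hookrightarrow BP^*(BG)\otimes_{BP^*}\bZ_{(3)}$ under which $Tor/Grif$ lands in the torsion submodule $D\otimes\bZ/3[x_{26}]\{x_{26}\}$ of the target. It remains to show this injection is onto, i.e.\ that each such class is realized by an algebraic cycle. The first step is to construct $x_{26}\in CH^*(BG)$ itself, by the method of $\S 5$ (the argument of Lemma 5.4 and Corollary 5.5 via algebraic Morava $K$-theory, or equivalently as the transfer of a Dickson-type invariant from a maximal elementary abelian $3$-subgroup, whose Chow ring is a polynomial algebra); then the $D$-multiples are obtained by multiplying with Chow lifts of $x_{36}$ and $x_{48}$, and the powers $x_{26}^{k}$ by induction. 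This yields $Tor/Grif\cong D\otimes\bZ/3[x_{26}]\{x_{26}\}$.

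The main obstacle is exactly this last surjectivity; everything preceding it is formal once the two cohomology computations above are granted. Exhibiting actual algebraic cycles realizing the torsion classes $x_{26}^{k}\cdot D$ --- as opposed to merely their images in $BP^*(BG)$ --- is the delicate point, playing here the role that Guillot's explicit determination of $CH^*(BSpin(7))$ plays in $\S 6$; note also that, since we do not assert $\overline{cl}_{\Omega}$ is surjective, only the inclusion (not an equality) is claimed in the first formula.
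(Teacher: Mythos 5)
The paper states this proposition without an explicit proof; it is meant to follow from the $BP^*(BG)\otimes_{BP^*}\bZ_{(3)}$ computation displayed just above together with two facts: (a) the cycle map $cl$ factors as the Thom reduction composed with a map $\overline{cl}_{\Omega}:CH^*(BG)\to BP^*(BG)\otimes_{BP^*}\bZ_{(3)}$, and (b) the torsion ideal $D\otimes\bZ/3[x_{26}]\{x_{26}\}$ of the target is actually contained in the image of $\overline{cl}_{\Omega}$, this being supplied implicitly by the cited computations of [Ka-Ya] and [Ya1]. Your proposal recovers (a) cleanly, derives the first inclusion correctly (including the correct observation that it is $3x_4$, not $x_4$, that survives), identifies $Grif=\Ker\overline{cl}_{\Omega}$ via the injectivity of the Thom reduction, and correctly reduces the second statement to (b). You also candidly flag (b) as the delicate point. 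That structural reading matches what the paper is doing.

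Where the proposal goes wrong is in the remedy you offer for (b). You present the Lemma 5.4 / Corollary 5.5 route through algebraic Morava $K$-theory and a transfer argument as ``equivalent''; they are not, and the first one cannot work here unconditionally. From $gr\,BP^*(BG)$ the class $x_{26}$ is annihilated by $(3,v_1,v_2)$, so it dies in $K(1)^*(BG)$ and $K(2)^*(BG)$ and is first nonzero only in $K(n)^*(BG)$ for $n\ge 3$. Assumption $(**)$ is known only for $n=1$; for $n\ge 2$ it is exactly the unverified hypothesis of the conditional Corollary 7.2. So the algebraic Morava $K$-theory argument produces nothing toward the unconditional proposition — it is the conditional corollary in disguise. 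The only viable route you name is the transfer from a maximal elementary abelian $3$-group (whose Chow ring is polynomial), but you leave it entirely undeveloped; in the paper's logic this is precisely the role played by [Ya1], analogous to Guillot's explicit determination of $CH^*(BSpin(7))$ in $\S 6$. In short: the formal reductions are correct and match the paper, but the one nontrivial step is not actually closed by the tools you invoke, and one of the two tools you invoke is inapplicable.
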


If Totaro's conjecture is correct, then $Grif=\{0\}$
and the first inclusion is an isomorphism.
From [Ya1], it is known that if $x_8^2\in Im(cl)$ for the cycle map $cl$, then we can show that $cl$ itself
is surjective. However it seems still unknown 
whether $x_8^2\in Im(cl)$ or not.  
\begin{cor}
Let $(G,p)=(F_4,3)$.
If $(**)$ in $\S 5$ is correct for some $n\ge 2$, then
the cycle map $CH^*(BG)\to BP^*(BG)\otimes_{BP^*}\bZ_{(3)}$
 is surjective and
\[ CH^*(BG)/Tor\cong  D\otimes (\bZ_{(3)}\{1,3x_4\}\oplus E).\]
\end{cor}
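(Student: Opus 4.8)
The plan is to deduce the corollary from Proposition 7.1, from the computation of $BP^*(BG)\otimes_{BP^*}\bZ_{(3)}$ recorded just above it, and from the implication recalled above from [Ya1], using the hypothesis $(**)$ only to produce one further cycle class. Write $\overline{cl}: CH^*(BG)\to BP^*(BG)\otimes_{BP^*}\bZ_{(3)}$ for the cycle map in the statement, which factors the ordinary cycle map $cl: CH^*(BG)\to H^*(BG)$. Its target is the direct sum of the torsion-free summand $D\otimes(\bZ_{(3)}\{1,3x_4\}\oplus E)$ and the torsion summand $D\otimes\bZ/3[x_{26}]\{x_{26}\}$, and $\overline{cl}(Tor)$ lies in the latter; Proposition 7.1 says exactly that $\overline{cl}$ carries $Tor$ onto the whole torsion summand (this is the isomorphism $Tor/Grif\cong D\otimes\bZ/3[x_{26}]\{x_{26}\}$) and induces on $CH^*(BG)/Tor$ the stated injection into the torsion-free summand. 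A diagram chase of $0\to Tor\to CH^*(BG)\to CH^*(BG)/Tor\to 0$ against the (split) sequence for $BP^*(BG)\otimes_{BP^*}\bZ_{(3)}$ then shows that $\overline{cl}$ is surjective if and only if that injection is an equality, i.e. $CH^*(BG)/Tor=D\otimes(\bZ_{(3)}\{1,3x_4\}\oplus E)$; in particular the two assertions of the corollary are equivalent. By the implication from [Ya1], this equality holds once $x_8^2\in Im(cl)$, so the whole corollary reduces to proving $x_8^2\in Im(cl)$ under $(**)$.

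To prove $x_8^2\in Im(cl)$ I would argue exactly as in the proof of Lemma 5.4, using Morava $K$-theory $K(n)$ for an $n\ge 2$ satisfying $(**)$. In the Atiyah-Hirzebruch spectral sequence converging to $K(n)^*(BG)$ --- whose differentials are of the form $(*)$ in the relevant range, as for the cases treated in [Ko-Ya] --- one has to see, from the $Q_i$-action on Toda's generators ($Q_0x_8=x_9$, $Q_1x_4=x_9$, $Q_2x_9=x_{26}$, \dots) and the shape of $H^*(BG;\bZ/3)$, that the class representing $x_8^2$ is a nonzero permanent cycle, so that $0\ne x_8^2\in K(n)^*(BG)$. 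Granting this, by $(**)$ and the isomorphism $AK(n)^*(BG)\cong K(n)^*\otimes CH^*(BG)/I$ (for some ideal $I$) there is $c\in CH^*(BG)$ with $cl_\Omega(c)=v_n^{s}x_8^2$ for some $s\ge 0$; comparing degrees, $c$ sits in Chow degree $8-(3^n-1)s$ since $x_8^2$ has cohomological degree $16$ and $|v_n|=-2(3^n-1)$. For $s\ge 1$ this degree is $\le 0$, which is impossible: for $n\ge 3$ it is negative and $CH^{<0}(BG)=0$, while for $n=2,\,s=1$ it would put $c$ in $CH^0(BG)$, whose image under $cl_\Omega$ has Atiyah-Hirzebruch filtration $0$, whereas $v_2x_8^2\ne 0$ has filtration $\ge 16$ --- this is the same degree argument as in the proof of Lemma 5.4, where Totaro's injectivity of $cl$ in low degrees (Theorem 15.1 in [To2]) does the corresponding work. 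Hence $s=0$ and $cl(c)=x_8^2$, i.e. $x_8^2\in Im(cl)$.

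The step I expect to be the real obstacle is the one left unjustified above: identifying precisely which class plays the role of $x_8^2$ and showing that it is a nonzero permanent cycle in $K(n)^*(BG)$ for a suitable $n\ge 2$. This forces one to compute $Q_n$ (and products such as $x_8\cdot Q_n x_8$) on Toda's generators and to exclude any differentials beyond those of type $(*)$, and it is here that the admissible values of $n$ and the exact degree bookkeeping are fixed. Everything else --- the reduction to the torsion-free equality, the appeal to [Ya1], and the degree count ruling out $s>0$ --- is formal.
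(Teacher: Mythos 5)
Your argument matches the paper's: reduce to showing $x_8^2\in Im(cl)$ via the remark from [Ya1], apply $(**)$ together with the identification $AK(n)^*(BG)\cong K(n)^*\otimes CH^*(BG)/I$ to produce $c\in CH^*(BG)$ with $cl_\Omega(c)=v_n^sx_8^2$, and rule out $s\ge 1$ using precisely the paper's stated inequality $|v_n^sx_8^2|=16-2s(3^n-1)\le 0$ for $n\ge 2$, with the borderline case $n=2$, $s=1$ (degree $0$) dismissed by the same filtration reasoning as in Lemma 5.4. The obstacle you flag at the end --- that one must still know $x_8^2$ is nonzero in $K(n)^*(BG)$ for the chosen $n\ge 2$ before $(**)$ gives anything --- is also left implicit and unverified in the paper's one-line proof, so your write-up faithfully reconstructs the paper's reasoning together with its tacit assumption.
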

\begin{proof}
The corollary follows from 
$|v_nx_8^2|=16-2(3^n-1)\le 0$.
\end{proof}

\end{document}